\def\bint{{\ifinner\rlap{\bf\kern.30em--}
\int\else\rlap{\bf\kern.35em--}\int\fi}\ignorespaces}
\def\sbint{{\ifinner\rlap{\bf\kern.32em--}
\hspace{0.078cm}\int\else\rlap{\bf\kern.45em--}\int\fi}\ignorespaces}
\def\red{\color{red}}
\def\rr{{\mathbb R}}
\def\rn{{\mathbb{R}^n}}
\def\nn{{\mathbb N}}
\def\zz{{\mathbb Z}}
\def\fz{\infty }
\def\dz{\delta}
\def\lz{\lambda}
\def\lf{\left}
\def\r{\right}
\def\ls{\lesssim}
\def\noz{\nonumber}
\def\wz{\widetilde}
\def\loc{{\mathrm{loc}}}
\DeclareMathOperator{\supp}{supp}
\def\XXint#1#2#3{{\setbox0=\hbox{$#1{#2#3}{\int}$ }
\vcenter{\hbox{$#2#3$ }}\kern-.6\wd0}}
\def\f{\frac}
\def\lz{{\lambda}}
\newtheorem{theorem}{Theorem}[section]
\newtheorem{lemma}[theorem]{Lemma}
\newtheorem{proposition}[theorem]{Proposition}
\newtheorem{assumption}[theorem]{Assumption}
\theoremstyle{definition}
\newtheorem{remark}[theorem]{Remark}
\newtheorem{definition}[theorem]{Definition}
\renewcommand{\appendix}{\par
\setcounter{section}{0}%
\setcounter{subsection}{0}%
\setcounter{subsubsection}{0}%
\gdef\thesection{\@Alph\c@section}%
\gdef\thesubsection{\@Alph\c@section.\@arabic\c@subsection}%
\gdef\theHsection{\@Alph\c@section.}%
\gdef\theHsubsection{\@Alph\c@section.\@arabic\c@subsection}%
\csname appendixmore\endcsname
}
\numberwithin{equation}{section}
\begin{document}

\arraycolsep=1pt

\title{\bf\Large
Boundedness of Calder\'on--Zygmund operators on
ball Campanato-type function spaces
\footnotetext{\hspace{-0.35cm} 2020 {\it
Mathematics Subject Classification}. Primary 42B20;
Secondary 42B25, 42B30, 42B35, 46E35, 47A30.
\endgraf {\it Key words and phrases.}
Calder\'on--Zygmund operator, Campanato-type space,
ball quasi-Banach function space, Hardy-type space.
\endgraf This project is partially supported by
the National Natural Science Foundation of China (Grant Nos.\
11971058 and 12071197) and the National
Key Research and Development Program of China
(Grant No.\ 2020YFA0712900).}}
\date{}
\author{Yiqun Chen, Hongchao Jia and Dachun Yang\footnote{Corresponding author,
E-mail: \texttt{dcyang@bnu.edu.cn}/{\red August 12, 2022}/Final version.}}
\maketitle

\vspace{-0.8cm}

\begin{center}
\begin{minipage}{13cm}
{\small {\bf Abstract}\quad
Let $X$ be a ball quasi-Banach function space on ${\mathbb R}^n$
satisfying some mild assumptions.
In this article, the authors first find a reasonable version
$\widetilde{T}$ of the Calder\'on--Zygmund operator $T$
on the ball Campanato-type function space $\mathcal{L}_{X,q,s,d}(\mathbb{R}^n)$
with $q\in[1,\infty)$, $s\in\mathbb{Z}_+^n$, and $d\in(0,\infty)$.
Then the authors prove that
$\widetilde{T}$ is bounded on $\mathcal{L}_{X,q,s,d}(\mathbb{R}^n)$
if and only if, for any $\gamma\in\mathbb{Z}^n_+$
with $|\gamma|\leq s$, $T^*(x^{\gamma})=0$, which is hence sharp.
Moreover, $\widetilde{T}$ is proved
to be the adjoint operator of $T$, which further strengthens
the rationality of the definition of $\widetilde{T}$.
All these results have a wide range of applications.
In particular, even when they are
applied, respectively, to weighted Lebesgue spaces, variable Lebesgue spaces,
Orlicz spaces, Orlicz-slice spaces,
Morrey spaces, mixed-norm Lebesgue spaces,
local generalized Herz spaces, and mixed-norm Herz spaces, all
the obtained results are new. The proofs of these results strongly
depend on the properties of the kernel of $T$ under consideration
and also on the dual theorem on $\mathcal{L}_{X,q,s,d}(\mathbb{R}^n)$.
}
\end{minipage}
\end{center}

\vspace{0.2cm}

%\tableofcontents

%\vspace{0.1cm}

\section{Introduction\label{s-intro}}

Recall that, for any $q\in(0,\infty]$, the \emph{Lebesgue space} $L^q(\rn)$
is defined to be the set of all the measurable functions $f$ on $\rn$
such that
$$\|f\|_{L^q(\rn)}:=
\begin{cases}
\displaystyle
\lf[\int_{\rn}|f(x)|^q\, dx\r]^{\frac{1}{q}}
&\text{if}\quad q\in(0,\fz),\\
\displaystyle
\mathop{\mathrm{ess\,sup}}_{x\in\rn}\,|f(x)|
&\text{if}\quad q=\fz
\end{cases}$$
is finite. It is well known that Lebesgue spaces play a fundamental role
in the modern analysis of mathematics.
Recall that the Calder\'on--Zygmund operator $T$ is bounded on $L^p(\rn)$ with $p\in(1,\fz)$,
which no longer holds true for $L^\fz(\rn)$.
Indeed, $T$ maps $L^{\fz}(\rn)$ to the space $\mathop{\mathrm{BMO}\,}(\rn)$
of functions with bounded mean oscillation, which was introduced by
John and Nirenberg \cite{JN}. This implies that the space $\mathop{\mathrm{BMO}\,}(\rn)$
is a reasonable replacement of $L^{\fz}(\rn)$ when
studying the boundedness of Calder\'on--Zygmund operators.
Moreover, in recent years,
the boundedness of Calder\'on--Zygmund operators on
BMO-type spaces has attracted more and more attention.
For instance, Nakai \cite{N10, N17} obtained the boundedness
of Calder\'on--Zygmund operators
on both Campanato-type spaces with variable growth conditions
over spaces of homogeneous type, in the sense of Coifman and Weiss, and their predual spaces.
We refer the reader also to \cite{an2019,HSS2016,ho2019,NS2012,yn2021,yn2022}
for more studies on this.

Recall that Sawano et al. \cite{SHYY}
introduced the ball quasi-Banach function space $X$ and
the related Hardy space $H_X(\rn)$ which is a natural generalization
of the classical Hardy space $H^p(\rn)$.
Ball quasi-Banach function spaces contain many well-known function spaces, for
instance, the weighted Lebesgue space, the Morrey space, the mixed-norm Lebesgue space,
and the Orlicz--slice space, but they may not be
quasi-Banach function spaces (see \cite{SHYY,zyyw,zhyy2022} for the details).
This implies that the studies on $X$ (see, for instance,
\cite{CWYZ2020,dgpyyz,ho2021,is2017,SHYY,tyyz,wyy,wyyz,zyyw}) have more wide applications.
We also refer the reader to \cite{syy,syy2,yhyy,yhyy1} for studies on
(weak) Hardy spaces associated with $X$ on spaces of homogeneous type in
the sense of Coifman and Weiss.

On the other hand, Fefferman and Stein \cite{FS72} obtained the well-known dual theorem
that $(H^1(\rn))^*=\mathop{\mathrm{BMO}\,}(\rn)$. Later,
Taibleson and Weiss \cite{tw80} generalized the above dual
theorem to the classical Hardy space $H^p(\rn)$
for any given $p\in(0,1]$, namely, for any $q\in[1,\infty]$
and any integer $s\in[0,\infty)\cap[n(\frac{1}{p}-1),\infty)$,
the dual space of $H^p(\rn)$ is $\mathcal{C}_{\frac{1}{p}-1,q,s}(\rn)$,
the Campanato space introduced in \cite{C}, which coincides with
$\mathop{\mathrm{BMO}\,}(\rn)$ when $p=1$.
Very recently, motivated by
$(H^p(\rn))^*=\mathcal{C}_{\frac{1}{p}-1,q,s}(\rn)$,
Zhang et al. \cite{zhyy2022} studied the dual theorem
of $H_X(\rn)$. To be precise, Zhang et al. \cite{zhyy2022}
proved that the dual space of $H_X(\rn)$ is $\mathcal{L}_{X,q,s,d}(\rn)$,
the ball Campanato-type function space associated with $X$ and, moreover,
established the Carleson measure characterization of $\mathcal{L}_{X,q,s,d}(\rn)$.

In this article, we study the boundedness of
Calder\'on--Zygmund operators on $\mathcal{L}_{X,q,s,d}(\rn)$.
To be precise, let $X$ be a ball quasi-Banach function space on ${\mathbb R}^n$
satisfying some mild assumptions.
We first find a reasonable version
$\widetilde{T}$ of the Calder\'on--Zygmund operator $T$
on the ball Campanato-type function space $\mathcal{L}_{X,q,s,d}(\mathbb{R}^n)$
(see Definition \ref{def-B-CZO} below), which was originally introduced
in \cite{ms1979} in the case $s:=0$ and further studied by Nakai \cite{N10,N17}
on spaces of homogeneous type.
Then we prove that
$\widetilde{T}$ is bounded on $\mathcal{L}_{X,q,s,d}(\mathbb{R}^n)$
if and only if, for any $\gamma\in\mathbb{Z}^n_+$
with $|\gamma|\leq s$, $T^*(x^{\gamma})=0$, which is hence sharp.
Moreover, $\widetilde{T}$ is proved
to be the adjoint operator of $T$, which further strengthens
the rationality of the definition of $\widetilde{T}$.
All these results have a wide range of applications.
In particular, even when they are
applied, respectively, to weighted Lebesgue spaces, variable Lebesgue spaces,
Orlicz spaces, Orlicz-slice spaces,
Morrey spaces, mixed-norm Lebesgue spaces,
local generalized Herz spaces, and mixed-norm Herz spaces, all
the obtained results are new. Due to the generality and the
practicability, more applications of these main results of this
article are predictable. The proofs of these results strongly
depend on the properties of the kernel of $T$ under consideration
and also on the dual theorem that $(H_X(\rn))^*=\mathcal{L}_{X,q,s,d}(\mathbb{R}^n)$.

To be precise, the remainder of this article is organized as follows.

In Section \ref{s1}, we recall some basic concepts
on the quasi-Banach function space $X$ (see Definition \ref{Debqfs} below)
and its related ball Campanato-type function space (see Definition \ref{2d2} below).

Section \ref{sec-CZO-B} is divided into two parts.
In Subsection \ref{sec-CZO-B-01}, we find a reasonable version
$\widetilde{T}$ of the Calder\'on--Zygmund operator $T$
on $\mathcal{L}_{X,q,s,d}(\mathbb{R}^n)$.
Then we establish the boundedness of $\widetilde{T}$ on $\mathcal{L}_{X,q,s,d}(\mathbb{R}^n)$
(see Theorem \ref{thm-B-CZ} below).
In Subsection \ref{sec-CZO-B-02}, we prove that $\widetilde{T}$
is the adjoint operator of $T$ (see Theorem \ref{dual-T-wT} below).

In Section \ref{Appli}, we apply all the above main results to eight concrete examples
of ball quasi-Banach function spaces,
namely, the weighted Lebesgue space
$L^p_{w}(\rn)$, the variable Lebesgue space $L^{p(\cdot)}(\rn)$,
the Orlicz space $L^{\Phi}(\rn)$, the Orlicz-slice space $(E_\Phi^r)_t(\rn)$,
the Morrey space $M_r^p(\rn)$,
the mixed-norm Lebesgue space $L^{\vec{p}}(\rn)$,
the local generalized Herz space
$\dot{\mathcal{K}}_{\omega,\mathbf{0}}^{p,r}(\mathbb{R}^{n})$,
and the mixed-norm Herz space $\dot{E}^{\vec{\alpha},\vec{p}}_{\vec{q}}(\rn)$,
respectively. Therefore, all the boundedness of $\wz T$
on $\mathcal{L}_{L^p_{w}(\rn),q,s,d}(\rn)$, $\mathcal{L}_{L^{p(\cdot)}(\rn),q,s,d}(\rn)$,
$\mathcal{L}_{L^{\Phi}(\rn),q,s,d}(\rn)$, and $\mathcal{L}_{(E_\Phi^r)_t(\rn),q,s,d}(\rn)$,
$\mathcal{L}_{M_r^p(\rn),q,s,d}(\rn)$, $\mathcal{L}_{L^{\vec{p}}(\rn),q,s,d}(\rn)$,
$\mathcal{L}_{\dot{\mathcal{K}}_{\omega,\mathbf{0}}^{p,r}(\mathbb{R}^{n}),q,s,d}(\rn)$,
and $\mathcal{L}_{\dot{E}^{\vec{\alpha},\vec{p}}_{\vec{q}}(\rn),q,s,d}(\rn)$ are obtained
(see, respectively, Theorems \ref{thm-wei},
\ref{thm-vec}, \ref{thm-phi}, \ref{thm-Et}, \ref{ap-M},
\ref{apply3}, \ref{apply5}, and \ref{apply7} below).

Finally, we make some conventions on notation. Let
$\nn:=\{1,2,\ldots\}$, $\zz_+:=\nn\cup\{0\}$, and $\mathbb{Z}^n_+:=(\mathbb{Z}_+)^n$.
We always denote by $C$ a \emph{positive constant}
which is independent of the main parameters,
but it may vary from line to line.
The symbol $f\lesssim g$ means that $f\le Cg$.
If $f\lesssim g$ and $g\lesssim f$, we then write $f\sim g$.
If $f\le Cg$ and $g=h$ or $g\le h$,
we then write $f\ls g\sim h$
or $f\ls g\ls h$, \emph{rather than} $f\ls g=h$
or $f\ls g\le h$. For any $s\in\zz_+$, we use $\mathcal{P}_s(\rn)$
to denote the set of all the polynomials on $\rn$
with total degree not greater than $s$. We use $\mathbf{0}$ to
denote the \emph{origin} of $\rn$.
For any measurable subset $E$ of $\rn$, we denote by $\mathbf{1}_E$ its
characteristic function. Moreover, for any $x\in\rn$ and $r\in(0,\fz)$,
let $B(x,r):=\{y\in\rn:\ |y-x|<r\}$.
Furthermore, for any $\lambda\in(0,\infty)$
and any ball $B(x,r)\subset\rn$ with $x\in\rn$ and
$r\in(0,\fz)$, let $\lambda B(x,r):=B(x,\lambda r)$.
Finally, for any $q\in[1,\infty]$,
we denote by $q'$ its \emph{conjugate exponent},
namely, $\frac{1}{q}+\frac{1}{q'}=1$.
Also, when we prove a lemma, proposition,
theorem, or corollary, we always use the same symbols
in the wanted proved lemma, proposition, theorem, or corollary.

\section{Ball quasi-Banach function spaces\label{s1}}

In this section, we recall the definitions of ball quasi-Banach function spaces and their related
ball Campanato-type function spaces.
In what follows, let $\mathscr M(\rn)$ denote the set of
all measurable functions on $\rn$ and
\begin{equation}\label{Eqball}
\mathbb{B}(\rn):=\lf\{B(x,r):\ x\in\rn \text{ and } r\in(0,\infty)\r\}.
\end{equation}
The following definition of ball quasi-Banach function
spaces on $\rn$
is just \cite[Definition 2.2]{SHYY}.

\begin{definition}\label{Debqfs}
Let $X\subset\mathscr{M}(\rn)$ be a quasi-normed linear space
equipped with a quasi-norm $\|\cdot\|_X$ which makes sense for all
measurable functions on $\rn$.
Then $X$ is called a \emph{ball quasi-Banach
function space} on $\rn$ if it satisfies:
\begin{enumerate}
\item[$\mathrm{(i)}$] if $f\in\mathscr{M}(\rn)$, then $\|f\|_{X}=0$ implies
that $f=0$ almost everywhere;
\item[$\mathrm{(ii)}$] if $f, g\in\mathscr{M}(\rn)$, then $|g|\le |f|$ almost
everywhere implies that $\|g\|_X\le\|f\|_X$;
\item[$\mathrm{(iii)}$] if $\{f_m\}_{m\in\nn}\subset\mathscr{M}(\rn)$ and $f\in\mathscr{M}(\rn)$,
then $0\le f_m\uparrow f$ almost everywhere as $m\to\infty$
implies that $\|f_m\|_X\uparrow\|f\|_X$ as $m\to\infty$;
\item[$\mathrm{(iv)}$] $B\in\mathbb{B}(\rn)$ implies
that $\mathbf{1}_B\in X$,
where $\mathbb{B}(\rn)$ is the same as in \eqref{Eqball}.
\end{enumerate}

Moreover, a ball quasi-Banach function space $X$
is called a
\emph{ball Banach function space} if it satisfies:
\begin{enumerate}
\item[$\mathrm{(v)}$] for any $f,g\in X$,
\begin{equation*}
\|f+g\|_X\le \|f\|_X+\|g\|_X;
\end{equation*}
\item[$\mathrm{(vi)}$] for any ball $B\in \mathbb{B}(\rn)$,
there exists a positive constant $C_{(B)}$,
depending on $B$, such that, for any $f\in X$,
\begin{equation*}
\int_B|f(x)|\,dx\le C_{(B)}\|f\|_X.
\end{equation*}
\end{enumerate}
\end{definition}

\begin{remark}\label{rem-ball-B}
\begin{enumerate}
\item[$\mathrm{(i)}$] Let $X$ be a ball quasi-Banach
function space on $\rn$. By \cite[Remark 2.6(i)]{yhyy1},
we conclude that, for any $f\in\mathscr{M}(\rn)$, $\|f\|_{X}=0$ if and only if $f=0$
almost everywhere.

\item[$\mathrm{(ii)}$] As was mentioned in
\cite[Remark 2.6(ii)]{yhyy1}, we obtain an
equivalent formulation of Definition \ref{Debqfs}
via replacing any ball $B$ by any
bounded measurable set $E$ therein.

\item[$\mathrm{(iii)}$] We should point out that,
in Definition \ref{Debqfs}, if we
replace any ball $B$ by any measurable set $E$ with
finite measure, we obtain the
definition of (quasi-)Banach function spaces which were originally
introduced in \cite[Definitions 1.1 and 1.3]{BS88}. Thus,
a (quasi-)Banach function space
is also a ball (quasi-)Banach function
space and the converse is not necessary to be true.
\item[$\mathrm{(iv)}$] By \cite[Theorem 2]{dfmn2021},
we conclude that both (ii) and (iii) of
Definition \ref{Debqfs} imply that any ball quasi-Banach
function space is complete.
\end{enumerate}
\end{remark}

We also recall the concepts of both the convexity and
the concavity of ball quasi-Banach function spaces,
which are a part of \cite[Definition 2.6]{SHYY}.

\begin{definition}\label{Debf}
	Let $X$ be a ball quasi-Banach function space and $p\in(0,\infty)$.
	\begin{enumerate}
		\item[(i)] The \emph{$p$-convexification} $X^p$ of $X$
		is defined by setting
		$$X^p:=\lf\{f\in\mathscr M(\rn):\ |f|^p\in X\r\}$$
		equipped with the \emph{quasi-norm} $\|f\|_{X^p}:=\|\,|f|^p\,\|_X^{1/p}$ for any $f\in X^p$.
		
		\item[(ii)] The space $X$ is said to be
		\emph{concave} if there exists a positive constant
		$C$ such that, for any $\{f_k\}_{k\in{\mathbb N}}\subset \mathscr M(\rn)$,
		$$\sum_{k=1}^{{\infty}}\|f_k\|_{X}
		\le C\left\|\sum_{k=1}^{{\infty}}|f_k|\right\|_{X}.$$
		In particular, when $C=1$, $X$ is said to be
		\emph{strictly concave}.
	\end{enumerate}
\end{definition}

Next, we present the definition of ball Campanato-type function spaces
associated with $X$ introduced in \cite[Definition 3.2]{zhyy2022}.
In what follows, for any given $q\in(0,\infty]$, we use
$L^q_{\mathrm{loc}}(\rn)$ to denote
the set of all the measurable functions $f$ on $\rn$ such that
$f\mathbf{1}_E\in L^q(\rn)$
for any bounded measurable set $E\subset \rn$.
For any $f\in L_{\loc}^1(\rn)$
and any finite measurable subset $E\subset\rn$, let
$$f_E:=\fint_Ef(x)\,dx:=\frac{1}{|E|}\int_E f(x)\,dx.$$
Moreover, for any $s\in\zz_+$, $\mathcal{P}_s(\rn)$
denotes the set of all the polynomials on $\rn$
with total degree not greater than $s$; for any ball
$B\in\mathbb{B}(\rn)$ and
any locally integrable function $g$ on $\rn$,
$P^{(s)}_B(g)$ denotes the \emph{minimizing polynomial} of
$g$ with total degree not greater than $s$, namely,
$P^{(s)}_B(g)$ is the unique polynomial $f\in\mathcal{P}_s(\rn)$
such that, for any $P\in\mathcal{P}_s(\rn)$,
\begin{align*}
\int_{B}[g(x)-f(x)]P(x)\,dx=0.
\end{align*}
Recall that Yan et al. \cite[Definition 1.11]{yyy20} introduced the following
Campanato space $\mathcal{L}_{X,q,s}(\rn)$
associated with the ball quasi-Banach function space $X$.

\begin{definition}\label{cqb}
Let $X$ be a ball quasi-Banach function space, $q\in[1,\fz)$,
and $s\in\zz_+$. The \emph{Campanato space}
$\mathcal{L}_{X,q,s}(\rn)$, associated with $X$, is defined to be
the set of all the $f\in L^q_{\mathrm{loc}}(\rn)$ such that
$$\|f\|_{\mathcal{L}_{X,q,s}(\rn)}
:=\sup_{B\subset\rn}\frac{|B|}{\|\mathbf{1}_B\|_X}\lf\{
\fint_{B}\lf|f(x)-P^{(s)}_B(f)(x)\r|^q\,dx\r\}^{1/q}<\infty,$$
where the supremum is taken over all balls $B\in\mathbb{B}(\rn)$.
\end{definition}

In what follows, by abuse of notation, we identify
$f\in\mathcal{L}_{X,q,s}(\rn)$ with $f+\mathcal{P}_s(\rn)$.

\begin{remark}\label{comeback}
Let $q\in[1,\infty)$, $s\in\zz_+$, and $\alpha\in[0,\fz)$.
Recall that Campanato \cite{C} introduced the
following well-known \emph{Campanato space}
$\mathcal{C}_{\alpha,q,s}(\rn)$ which is defined to be the
set of all the $f\in L^q_\loc(\rn)$ such that
\begin{align*}
\|f\|_{\mathcal{C}_{\alpha,q,s}(\rn)} \
:=\sup |B|^{-\alpha}\left[\fint_{B}\left|f(x)
-P^{(s)}_B(f)(x)\right|^{q}\,dx\right]^{\frac{1}{q}}<\infty,
\end{align*}
where the supremum is taken over all balls $B\subset\rn$.
It is well known that $\mathcal{C}_{\alpha,q,s}(\rn)$ when $\alpha=0$
coincides with the space $\mathrm{BMO}\,(\rn)$.
Moreover, when $X:=L^{\frac{1}{\alpha+1}}(\rn)$, we have
$\mathcal{L}_{X,q,s}(\rn)=\mathcal{C}_{\alpha,q,s}(\rn)$.
\end{remark}

Very recently, Zhang et al. \cite[Definition 3.2]{zhyy2022}
introduced the following
ball Campanato-type function space $\mathcal{L}_{X,q,s,d}(\rn)$ associated with the
ball quasi-Banach function space $X$.

\begin{definition}\label{2d2}
Let $X$ be a ball quasi-Banach function space, $q\in[1,{\infty})$,
$d\in(0,\infty)$,
and $s\in\zz_+$. Then the \emph{ball Campanato-type function space}
$\mathcal{L}_{X,q,s,d}(\rn)$, associated with $X$, is defined to be
the set of all the $f\in L^q_{\mathrm{loc}}({{\rr}^n})$ such that
\begin{align*}
\|f\|_{\mathcal{L}_{X,q,s,d}(\rn)}
:&=\sup
\lf\|\lf\{\sum_{i=1}^m
\lf(\frac{{\lambda}_i}{\|{\mathbf{1}}_{B_i}\|_X}\r)^d
{\mathbf{1}}_{B_i}\r\}^{\frac1d}\r\|_{X}^{-1}\\
&\quad\times\sum_{j=1}^m\frac{{\lambda}_j|B_j|}{\|{\mathbf{1}}_{B_j}
\|_{X}}
\lf[\fint_{B_j}\lf|f(x)-P^{(s)}_{B_j}(f)(x)\r|^q \,dx\r]^\frac1q
\end{align*}
is finite, where the supremum is taken over all
$m\in\nn$, $\{B_j\}_{j=1}^m\subset \mathbb{B}(\rn)$, and
$\{\lambda_j\}_{j=1}^m\subset[0,\infty)$ with
$\sum_{j=1}^m\lambda_j\neq0$.
\end{definition}

In what follows, by abuse of notation, we identify
$f\in\mathcal{L}_{X,q,s,d}(\rn)$ with $f+\mathcal{P}_s(\rn)$.

\begin{remark}\label{rem-ball-B3}
Let $q\in[1,\fz)$, $s\in\zz_+$, $d\in(0,1)$, and
$X$ be a concave ball quasi-Banach function space.
In this case, it was proved in
\cite[Proposition 3.7]{zhyy2022} that
$\mathcal{L}_{X,q,s,d}({{\rr}^n})=\mathcal{L}_{X,q,s}(\rn)$
with equivalent quasi-norms.
\end{remark}

\section{Calder\'on--Zygmund operators on
$\mathcal{L}_{X,q,s,d}(\rn)$}\label{sec-CZO-B}

This section is devoted to studying the boundedness
of Calder\'on--Zygmund operators on the space $\mathcal{L}_{X,q,s,d}(\rn)$.
For this purpose,
we divide this section into two subsections.
In Subsection \ref{sec-CZO-B-01}, we present a reasonable version
$\widetilde{T}$ of the Calder\'on--Zygmund operator $T$
on $\mathcal{L}_{X,q,s,d}(\mathbb{R}^n)$ and then
obtain the boundedness of
$\widetilde{T}$ on $\mathcal{L}_{X,q,s,d}(\mathbb{R}^n)$.
In Subsection \ref{sec-CZO-B-02}, we prove that $\widetilde{T}$ is
the adjoint operator of $T$, which further strengthens
the rationality of the definition of $\widetilde{T}$.

\subsection{Boundedness of Calder\'on--Zygmund operators on
$\mathcal{L}_{X,q,s,d}(\rn)$}\label{sec-CZO-B-01}

In this subsection, we establish the boundedness
of Calder\'on--Zygmund operators on the space $\mathcal{L}_{X,q,s,d}(\rn)$.
We begin with the concept of the $s$-order standard kernel
(see, for instance, \cite[Chapter III]{EMS2}).
In what follows, for any $\gamma=(\gamma_1,\ldots,\gamma_n)\in
\zz_+^n$,
any $\gamma$-order differentiable function $F(\cdot,\cdot)$
on $\rn\times \rn\setminus\{(w,w):\ w\in\rn\}$, and any $(x,y)\in \rn\times \rn\setminus\{(w,w):\ w\in\rn\}$, let
$$
\partial_{(1)}^{\gamma}F(x,y):=\frac{\partial^{|\gamma|}}
{\partial x_1^{\gamma_1}\cdots\partial x_n^{\gamma_n}}F(x,y)$$
and
$$\partial_{(2)}^{\gamma}F(x,y):=\frac{\partial^{|\gamma|}}
{\partial y_1^{\gamma_1}\cdots\partial y_n^{\gamma_n}}F(x,y).
$$

\begin{definition}\label{def-s-k}
Let $s\in\zz_+$ and $\dz\in(0,1]$. A measurable function $K$
on $\rn\times \rn\setminus\{(w,w):\ w\in\rn\}$
is called an \emph{$s$-order standard kernel with regularity $\dz$} if
there exists a positive constant $C$
such that, for any $\gamma\in\zz_+^n$ with $|\gamma|\leq s$,
the following hold true:
\begin{itemize}
\item[\rm (i)]
for any $x,y\in\rn$ with $x\neq y$,
\begin{align}\label{size-s'}
\lf|\partial_{(2)}^{\gamma}K(x,y)\r|\le
\frac{C}{|x-y|^{n+|\gamma|}};
\end{align}

\item[\rm (ii)]
\eqref{size-s'} still holds true for the first variable of $K$;

\item[\rm (iii)]
for any $x,y,z\in\rn$
with $x\neq y$ and $|x-y|\ge2|y-z|$,
\begin{align}\label{regular2-s}
\lf|\partial_{(2)}^{\gamma}K(x,y)-\partial_{(2)}^{\gamma}K(x,z)\r|
\le C\frac{|y-z|^\dz}{|x-y|^{n+|\gamma|+\dz}};
\end{align}

\item[\rm (iv)]
\eqref{regular2-s} still holds true for the first variable of $K$.
\end{itemize}
\end{definition}

In what follows, we use $\eta\to 0^+$
to denote $\eta \in(0,\fz)$ and $\eta\to 0$.

\begin{definition}\label{defin-C-Z-s}
Let $s\in\zz_+$ and $K$ be the same as in Definition \ref{def-s-k}.
A linear operator $T$ is called an
\emph{$s$-order Calder\'on--Zygmund singular integral operator
with kernel $K$} if $T$ is bounded on $L^2(\rn)$ and,
for any given $f\in L^2(\rn)$
and for almost every $x\in\rn$,
\begin{align*}
T(f)(x)=\lim_{\eta\to0^+}T_\eta (f)(x),
\end{align*}
where, for any $\eta\in(0,\infty)$,
\begin{align*}
T_\eta (f)(x)
:=\int_{\rn\setminus B(x,\eta)}K(x,y)f(y)\,dy.
\end{align*}
\end{definition}

\begin{remark}\label{rem-2.15}
Let $T$ be the same as in Definition \ref{defin-C-Z-s}.
From a claim in \cite[p.\,102]{Duo01}, we deduce that, for any $q\in(1,\fz)$,
$T$ is bounded on $L^q(\rn)$ and, moreover, for any $f\in L^q(\rn)$,
\begin{align*}
T(f)=\lim_{\eta\to0^+}T_\eta (f)
\end{align*}
both almost everywhere on $\rn$ and in $L^q(\rn)$.
\end{remark}

Now, we recall the following well-known
concept of vanishing moments on $T$
(see, for instance, \cite[p.\,23]{mc1997}).

\begin{definition}\label{Def-T-s-v}
Let $s\in\zz_+$. An $s$-order
Calder\'on--Zygmund singular integral operator $T$
is said to have the \emph{vanishing moments up to
order $s$} if, for any function $a\in L^2(\rn)$ having compact support
and satisfying that,
for any $\gamma:=(\gamma_1,\ldots,\gamma_n)\in\zz_+^n$ with $|\gamma|\le s$,
$\int_{\rn} a(x)x^\gamma\,dx=0$, it holds true that
\begin{align*}
T^*(x^{\gamma}):=\int_{\rn} T(a)(x)x^\gamma\,dx=0,
\end{align*}
here and thereafter, $|\gamma|:=\gamma_1+\cdots+\gamma_n$ and,
for any $x:=(x_1,\ldots,x_n)\in\rn$,
$x^\gamma:=x_1^{\gamma_1}\cdots x_n^{\gamma_n}$.
\end{definition}

To show the boundedness of Calder\'on--Zygmund operators on both
$\mathcal{L}_{X,q,s}(\rn)$ and $\mathcal{L}_{X,q,s,d}(\rn)$,
we now recall the following definition of $s$-order modified
Calder\'on--Zygmund operators (see, for instance,
\cite[Definition 2.11]{jtyyz3}). In what follows, for any given $s\in\mathbb{Z}_+$ and
$\dz\in(0,1]$, we always need to consider a function
$f\in L^1_{\loc}(\rn)$ satisfying that,
for any ball $B(x,r)\in\mathbb{B}(\rn)$ with $x\in\rn$ and $r\in(0,\fz)$,
\begin{align}\label{suita}
\int_{\rn\setminus B(x,r)}\frac{|f(y)-P_{B(x,r)}^{(s)}(f)(y)|}{|x-y|^{n+s+\dz}}\,dy<\fz.
\end{align}

\begin{definition}\label{def-B-CZO}
Let $s\in\zz_+$, $\dz\in(0,1]$, and $K$ be an $s$-order standard kernel.
For any $x,y\in\rn$ with $x\neq y$, let
\begin{align}\label{Kw-K}
\widetilde{K}(x,y):=K(y,x),
\end{align}
which is the formal adjoint kernel of $K$,
and $B_0:=B(x_0,r_0)\in\mathbb{B}(\rn)$ with center $x_0\in\rn$ and radius $r_0\in(0,\fz)$.
The \emph{$s$-order modified Calder\'on--Zygmund
operator $\widetilde{T}_{B_0}$ with kernel $\widetilde{K}$}
is defined by setting, for any $f\in L^1_{\mathrm{loc}}(\rn)$
satisfying \eqref{suita}
and for almost every $x\in \rn$,
\begin{align}\label{2.12x}
\widetilde{T}_{B_0}(f)(x):=
\lim_{\eta\to 0^+}\widetilde{T}_{B_0,\eta}(f)(x),
\end{align}
where, for any $\eta\in(0,\fz)$,
\begin{align*}
\widetilde{T}_{B_0,\eta}(f)(x)
:&=\int_{\rn\setminus B(x,\eta)}
\lf[\widetilde{K}(x,y)\phantom{\sum_{\{\gamma\in\zz_+^n:\ |\gamma|\leq s\}}\frac{\partial_{(1)}^{\gamma}\widetilde{K}}{\gamma!}}\r.\\
&\quad\lf.-\sum_{\{\gamma\in\zz_+^n:\ |\gamma|\leq s\}}
\frac{\partial_{(1)}^{\gamma}\widetilde{K}(x_0,y)}{\gamma!}
(x-x_0)^{\gamma}\mathbf{1}_{\rn\setminus B_0}(y)\r]f(y)\,dy.
\end{align*}
\end{definition}

Next, we show that $\widetilde{T}_{B_0}(f)$
in \eqref{2.12x} is well defined;
since its proof is quite similar to that of
\cite[Proposition 2.22]{jtyyz3}, we only
sketch some important steps.

\begin{proposition}\label{converge-B}
Let all the symbols be the same as in Definition \ref{def-B-CZO}.
Then, for any $f\in L^1_{\loc}(\rn)$ satisfying \eqref{suita},
$\widetilde{T}_{B_0}(f)$ in \eqref{2.12x}
is well defined almost everywhere on $\rn$.
\end{proposition}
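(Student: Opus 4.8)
The plan is to show that the limit defining $\widetilde{T}_{B_0}(f)(x)$ exists for almost every $x\in\rn$ by splitting the domain of integration into a local part, where the singularity of $\widetilde K$ lives, and a far part, where the polynomial correction makes the integrand decay fast enough to be absolutely integrable. Fix $x\in\rn$ and choose a ball, say $2B_0\cup B(x,1)$, large enough to contain both $B_0$ and a neighbourhood of $x$; call it $\widetilde B$. For $y\in\widetilde B\setminus B(x,\eta)$ the polynomial term $\sum_{\{\gamma:\,|\gamma|\le s\}}\frac{\partial_{(1)}^{\gamma}\widetilde K(x_0,y)}{\gamma!}(x-x_0)^{\gamma}\mathbf 1_{\rn\setminus B_0}(y)$ is a bounded function of $y$ (its $y$-dependence is through derivatives of $\widetilde K(x_0,\cdot)$ which, away from $x_0$ and for $y$ in a bounded set, are bounded), so on this region the integrand is $\widetilde K(x,y)f(y)$ plus an $L^1_{\loc}$ function times $f$; since $f\in L^1_{\loc}(\rn)$ and $\widetilde K(x,\cdot)=K(\cdot,x)$ is an $s$-order standard kernel (hence the classical Calderón--Zygmund theory applies with the roles of the two variables exchanged), the existence of $\lim_{\eta\to0^+}\int_{\widetilde B\setminus B(x,\eta)}\widetilde K(x,y)f(y)\,dy$ for almost every $x$ follows exactly as in Remark \ref{rem-2.15} / the classical theory, once one writes $f=f\mathbf 1_{2\widetilde B}+f\mathbf 1_{\rn\setminus 2\widetilde B}$ and notes the second summand contributes a smooth-in-$x$ convergent integral on $\widetilde B$.

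Next I would handle the far part $y\in\rn\setminus\widetilde B$, where there is no singularity and no $\eta$-dependence, so it suffices to prove absolute convergence. Here $|x-y|\ge|x_0-y|\sim$ something comparable to $|x-y|$ (because $x$ and $x_0$ both lie in the fixed bounded set $\widetilde B$ while $y$ is far), and the bracketed expression is exactly a Taylor remainder: $\widetilde K(x,y)-\sum_{|\gamma|\le s}\frac{\partial_{(1)}^{\gamma}\widetilde K(x_0,y)}{\gamma!}(x-x_0)^{\gamma}$. Using the smoothness estimate \eqref{regular2-s} for the first variable of $\widetilde K$ together with the standard $(s+\delta)$-order Taylor remainder bound (differences of $\partial_{(1)}^{\gamma}\widetilde K$ with $|\gamma|=s$ controlled via the Hölder-type estimate), one gets
\begin{align*}
\lf|\widetilde K(x,y)-\sum_{\{\gamma\in\zz_+^n:\ |\gamma|\le s\}}\frac{\partial_{(1)}^{\gamma}\widetilde K(x_0,y)}{\gamma!}(x-x_0)^{\gamma}\r|
\ls\frac{|x-x_0|^{s+\dz}}{|x_0-y|^{n+s+\dz}}.
\end{align*}
Therefore the far-part integral is dominated, up to a constant depending on $x$ (through $|x-x_0|$ and the fixed radii), by $\int_{\rn\setminus\widetilde B}\frac{|f(y)|}{|x_0-y|^{n+s+\dz}}\,dy$, which is finite: this is precisely where hypothesis \eqref{suita} is invoked, since for $y$ away from $B_0$ one has $P_{B_0}^{(s)}(f)(y)$ of polynomial growth, negligible against $|x_0-y|^{-(n+s+\dz)}$, so \eqref{suita} for the ball $B_0$ (enlarged if necessary using the easy comparison between \eqref{suita} for comparable balls) gives $\int_{\rn\setminus\widetilde B}|f(y)||x_0-y|^{-(n+s+\dz)}\,dy<\infty$.

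Combining the two parts, $\widetilde{T}_{B_0,\eta}(f)(x)$ converges as $\eta\to0^+$ for almost every $x\in\rn$, which is the claim. I expect the main obstacle to be the bookkeeping in the local part: one must carefully separate the genuinely singular piece $\widetilde K(x,y)f(y)\mathbf 1_{\widetilde B}(y)$ — whose $\eta\to0^+$ limit exists a.e.\ only because $f\mathbf 1_{\widetilde B}\in L^1$ actually needs to be upgraded, e.g.\ one truncates to $f\mathbf 1_{\widetilde B}\in L^q$ is not automatic, so instead one uses that the principal value of a standard kernel against an $L^1_{\loc}$ function exists a.e.\ by the Calderón--Zygmund theory of \cite{Duo01} applied after a further decomposition — from the polynomial correction term, whose contribution over $\widetilde B$ is harmless because the correction is bounded there and $f\in L^1_{\loc}$. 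Since, as the authors note, the argument parallels that of \cite[Proposition 2.22]{jtyyz3}, I would present the local part by citing that the principal value exists a.e.\ (as in Remark \ref{rem-2.15}) and devote the written detail to the far-part Taylor-remainder estimate and its control by \eqref{suita}.
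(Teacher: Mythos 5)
Your argument is correct in substance and reaches the stated conclusion, but it organizes the decomposition differently from the paper. You split $\widetilde{T}_{B_0,\eta}(f)(x)$ into a local singular piece plus a far Taylor-remainder piece while keeping $f$ itself throughout, invoking \eqref{suita} only after a triangle inequality together with the observation that a degree-$s$ polynomial is integrable against $|x_0-y|^{-(n+s+\dz)}$ at infinity; this is legitimate and slightly more economical if one only wants well-definedness. The paper instead subtracts the minimizing polynomial $P^{(s)}_{2\widetilde{B}}(f)$ from $f$ at the outset, over an arbitrary ball $\widetilde{B}=B(z,r)$, producing the four-term decomposition $E_{\widetilde{B},\eta}+E_{\widetilde{B}}+E^{(1)}_{\widetilde{B},B_0,\eta}+E^{(2)}_{\widetilde{B},B_0,\eta}$ in which \eqref{suita} applies verbatim to $E_{\widetilde{B}}$ and the last two terms converge to elements of $\mathcal{P}_s(\rn)$; the payoff is the representation \eqref{limit}, namely that on $\widetilde{B}$ one has $\widetilde{T}_{B_0}(f)=T_{(\widetilde{K})}([f-P^{(s)}_{2\widetilde{B}}(f)]\mathbf{1}_{2\widetilde{B}})+E_{\widetilde{B}}$ modulo $\mathcal{P}_s(\rn)$, which is precisely what is reused in the proof of Theorem \ref{thm-B-CZ} (the terms $\mathrm{I}_1$ and $\mathrm{I}_2$ there). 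Your route does not produce this formula, so the paper's computation would have to be redone later anyway. Two smaller remarks. First, you rightly flag that the local singular piece acts on a function that is merely in $L^1(\rn)$, so Remark \ref{rem-2.15} (stated for $L^q$ with $q\in(1,\fz)$) does not literally apply and one must invoke the a.e.\ existence of principal values for $L^1$ data via the weak $(1,1)$ bound for the maximal truncated operator; note that the paper faces exactly the same point, since $[f-P^{(s)}_{2\widetilde{B}}(f)]\mathbf{1}_{2\widetilde{B}}$ is also only known to be in $L^1(\rn)$. Second, fixing $x$ first and then choosing $\widetilde{B}$ is logically awkward for an almost-everywhere statement; as in the paper, one should fix $\widetilde{B}$, prove a.e.\ convergence on $\widetilde{B}$, and then exhaust $\rn$ by countably many such balls.
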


\begin{proof}
Let $f\in L^1_{\loc}(\rn)$ satisfy \eqref{suita},
and $\widetilde{B}:=B(z,r)\in\mathbb{B}(\rn)$ with $z\in\rn$ and
$r\in(0,\fz)$.
To show the present proposition, we write,
for any given $\eta\in(0,r)$ [and hence $B(x,\eta)\subset
2\widetilde{B}$]
and for any $x\in \widetilde{B}$,
$$\widetilde{T}_{B_0,\eta}(f)(x)
=E_{\widetilde{B},\eta}(x)+E_{\widetilde{B}}(x)
+E_{\widetilde{B},B_0,\eta}^{(1)}(x)+E_{\widetilde{B},B_0,\eta}^{(2)}(x),$$
where
\begin{align*}
E_{\widetilde{B},\eta}(x):=\int_{2\widetilde{B}\setminus B(x,\eta)}
\widetilde{K}(x,y)\lf[f(y)-P_{2\widetilde{B}}^{(s)}(f)(y)\r]\,dy,
\end{align*}
\begin{align}\label{Eb}
E_{\widetilde{B}}(x):&=\int_{\rn\setminus2\widetilde{B}}
\lf[\widetilde{K}(x,y)-\sum_{\{\gamma\in\zz_+^n:\ |\gamma|\leq s\}}
\frac{\partial_{(1)}^{\gamma}\widetilde{K}(z,y)}{\gamma!}(x-z)^{\gamma}\r]\noz\\
&\quad\times\lf[f(y)-P_{2\widetilde{B}}^{(s)}(f)(y)\r]\,dy,
\end{align}
\begin{align*}
E_{\widetilde{B},B_0,\eta}^{(1)}(x)
:=\int_{\rn\setminus B(x,\eta)}L_{\widetilde{B},B_0}(x,y)\lf[f(y)
-P_{2\widetilde{B}}^{(s)}(f)(y)\r]\,dy
\end{align*}
with
\begin{align*}
L_{\widetilde{B},B_0}(x,y)
:&=\sum_{\{\gamma\in\zz_+^n:\ |\gamma|\leq s\}}
\frac{\partial_{(1)}^{\gamma}\widetilde{K}(z,y)}{\gamma!}(x-z)^{\gamma}
\mathbf1_{\rn\setminus 2\widetilde{B}}(y)\\
&\quad-\sum_{\{\gamma\in\zz_+^n:\ |\gamma|\leq s\}}
\frac{\partial_{(1)}^{\gamma}\widetilde{K}(x_0,y)}{\gamma!}(x-x_0)^{\gamma}
\mathbf1_{\rn\setminus B_0}(y),
\end{align*}
and
\begin{align*}
E_{\widetilde{B},B_0,\eta}^{(2)}(x)
:&=\int_{\rn\setminus B(x,\eta)}
\lf[\widetilde{K}(x,y)\phantom{\sum_{\{\gamma\in\zz_+^n:\ |\gamma|\leq s\}}
	\frac{\partial_{(1)}^{\gamma}\widetilde{K}(x_0,y)}{\gamma!}}\r.\\
&\quad\lf.-\sum_{\{\gamma\in\zz_+^n:\ |\gamma|\leq s\}}
\frac{\partial_{(1)}^{\gamma}\widetilde{K}(x_0,y)}{\gamma!}(x-x_0)^{\gamma}
\mathbf1_{\rn\setminus B_0}(y)\r] P_{2\widetilde{B}}^{(s)}(f)(y)\,dy.
\end{align*}
Thus, to prove that $\widetilde{T}_{B_0}(f)$ is well defined
almost everywhere on $\wz B$, it suffices to show that
$E_{\widetilde{B},\eta}$, $E_{\widetilde{B}}$,
$E_{\widetilde{B},B_0,\eta}^{(1)}$,
and $E_{\widetilde{B},B_0,\eta}^{(2)}$ are convergent
almost everywhere on $B$ as $\eta\to0^+$, respectively.

We first consider $E_{\widetilde{B},\eta}$.
Indeed, it is easy to show that
$[f-P_{2\widetilde{B}}^{(s)}(f)]\mathbf1_{2\widetilde{B}}\in L^1(\rn)$
and hence, for any $\eta\in(0,\fz)$, $E_{\widetilde{B},\eta}$ is well defined almost everywhere
on $\wz B$.
Moreover, let $T_{(\widetilde{K})}$ be the $s$-order
Calder\'on--Zygmund
singular integral operator
with kernel $\widetilde{K}$. Then,
by Remark \ref{rem-2.15}, we know that
\begin{align}\label{T1}
\lim_{\eta\to0^+}E_{\widetilde{B},\eta}
=T_{(\widetilde{K})}\lf(\lf[f-P_{2\widetilde{B}}^{(s)}(f)\r]
\mathbf1_{2\widetilde{B}}\r)
\end{align}
almost everywhere on $\widetilde{B}$.

Now, we estimate $E_{\widetilde{B}}$.
Observe that, since $E_{\widetilde{B}}$ is independent of $\eta$,
we only need to prove that $E_{\widetilde{B}}$ is bounded on
$\widetilde{B}$.
Indeed, applying both an argument similar to that used in the estimation of
\cite[(2.33)]{jtyyz3} and \eqref{suita},
we conclude that, for any $x\in\widetilde{B}$,
\begin{align}\label{lem-CZ-01}
\lf|E_{\widetilde{B}}(x)\r|	
&\ls r^{s+\dz}\int_{\rn\setminus 2\widetilde{B}}\frac{|f(y)
-P_{2\widetilde{B}}^{(s)}(f)(y)|}{|y-z|^{n+s+\dz}}\,dy<\fz.
\end{align}

Next, we consider both $E_{\widetilde{B},B_0,\eta}^{(1)}$ and
$E_{\widetilde{B},B_0,\eta}^{(2)}$.
Indeed, from \cite[(2.39)]{jtyyz3}, we deduce that
\begin{align}\label{T3-lim}
P_{1}(\widetilde{B},B_0;f)(\cdot)
:=&\lim_{\eta\to0^+}E_{\widetilde{B},B_0,\eta}^{(1)}(\cdot)
\in \mathcal{P}_s(B).
\end{align}
Moreover, similarly to the proof of \cite[(2.40)]{jtyyz3}, we conclude that
there exists a polynomial $P_2(\widetilde{B},B_0;f)\in
\mathcal{P}_s(\rn)$
such that, for almost every $x\in \widetilde{B}$,
\begin{align}\label{T4}
\lim_{\eta\to 0^+}E_{\widetilde{B},B_0,\eta}^{(2)}(x)
=P_2(\widetilde{B},B_0;f)(x).
\end{align}
From \eqref{T1}, \eqref{lem-CZ-01}, \eqref{T3-lim}, and \eqref{T4},
we deduce that, for any given ball $\widetilde{B}\in\mathbb{B}(\rn)$
and for almost every $x\in \widetilde{B}$,
\begin{align}\label{limit}
\widetilde{T}_{B_0}(f)(x)
&=\lim_{\eta\to0^+}\widetilde{T}_{B_0,\eta}(f)(x)\noz\\
&=T_{(\widetilde{K})}\lf(\lf[f-P_{2\widetilde{B}}^{(d)}(f)
\r]\mathbf1_{2\widetilde{B}}\r)(x)
+E_{\widetilde{B}}(x)\noz\\
&\quad+P_1(\widetilde{B},B_0;f)(x)
+P_2(\widetilde{B},B_0;f)(x).
\end{align}
Therefore, $\widetilde{T}_{B_0}(f)$
is well defined almost everywhere on $\rn$,
which then completes the proof of Proposition \ref{converge-B}.
\end{proof}

\begin{remark}\label{rem-CZ-B}
Let all the symbols be the same as in Proposition \ref{converge-B}.
By \cite[Remark 2.23]{jtyyz3}, we conclude that,
for any $f$ satisfying \eqref{suita}
and for any ball $B_1\in \mathbb{B}(\rn)$,
$$
\widetilde{T}_{B_0}(f)-\widetilde{T}_{B_1}(f)\in \mathcal{P}_s(\rn)
$$
after changing values on a set of measure zero.
Based on this, in what follows, we write $\widetilde{T}$
instead of $\widetilde{T}_{B_0}$ if there exists no confusion.
\end{remark}

Now, we show that $\wz T$ is well defined on both
$\mathcal{L}_{X,q,s}(\rn)$ and $\mathcal{L}_{X,q,s,d}(\rn)$.
To this end, we need the following mild assumption
about the boundedness of the Hardy--Littlewood maximal operator
on ball quasi-Banach function spaces (see, for instance, \cite[Assumption 2.6]{zhyy2022}).
Recall that the \emph{Hardy-Littlewood maximal operator}
$\mathcal{M}$ is defined by setting, for any $f\in
L_{\mathrm{loc}}^1(\rn)$ and $x\in\rn$,
\begin{align*}
\mathcal{M}(f)(x):=\sup_{B\ni x}\frac{1}{|B|}\int_{B}|f(y)|dy,
\end{align*}
where the supremum is taken over all the balls $B\in\mathbb{B}(\rn)$ containing $x$.

\begin{assumption}\label{assump1}
Let $X$ be a ball quasi-Banach function space. Assume that there
exists a $p_-\in(0,\infty)$ such that,
for any given $p\in(0,p_-)$ and $u\in(1,\infty)$, there exists a
positive constant $C$ such that,
for any $\{f_j\}_{j=1}^\infty\subset\mathscr M(\rn)$,
\begin{align*}
\lf\|\lf\{\sum_{j\in\nn}\lf[\mathcal{M}(f_j)\r]^u\r\}^{\frac{1}{u}}
\r\|_{X^{1/p}}
\le C\lf\|\lf(\sum_{j\in\nn}|f_j|^u\r)^{\frac{1}{u}}\r\|_{X^{1/p}}.
\end{align*}
\end{assumption}

\begin{remark}\label{main-remark}
Let both $X$ and $p_-$ satisfy Assumption \ref{assump1}, and
let $d\in(0,\fz)$.
From this and the fact that, for any $r\in(0,\min\{d,p_-\})$, any ball $B\in\mathbb{B}(\rn)$,
and any $\beta\in[1,\fz)$,
$\mathbf{1}_{\beta B}\leq (\beta+1)^{\frac{dn}{r}}
[\mathcal{M}(\mathbf{1}_B)]^{\frac{d}{r}},$
we easily deduce that, for any $r\in(0,\min\{d,p_-\})$, any
$\beta\in[1,\fz)$,
any sequence $\{B_j\}_{j\in\nn}\subset \mathbb{B}(\rn)$,
and any $\{\lambda_j\}_{j\in\nn}\subset [0,\fz)$,
\begin{align*}
\lf\|\lf(\sum_{j\in\nn}\lambda_j^d\mathbf{1}_{\beta
B_j}\r)^{\frac{1}{d}}\r\|_{X}
\leq(2\beta)^{\frac{n}{r}}\lf\|\lf(\sum_{j\in\nn}
\lambda_j^d\mathbf{1}_{B_j}\r)^{\frac{1}{d}}\r\|_{X}.
\end{align*}
Particularly, for any $r\in(0,p_-)$ and any ball $B\in
\mathbb{B}(\rn)$, we have
\begin{align}\label{key-c}
\lf\|\mathbf{1}_{\beta B}\r\|_{X}
\leq (2\beta)^{\frac{n}{r}}\|\mathbf{1}_{B}\|_{X}.
\end{align}
\end{remark}

We also need the following lemma which is a direct corollary of \cite[Lemma 3.12]{cjy-02};
we omit the details here.

\begin{lemma}\label{lem-suit}
Let both $X$ and $p_-$ satisfy Assumption \ref{assump1}.	
Let $\dz\in(0,1]$, $s\in(\frac{n}{p_-}-n-\dz,\fz)\cap\zz_+$, $q\in[1,\fz)$, and $d\in(0,\fz)$.
Then, for any $f\in \mathcal{L}_{X,q,s}(\rn)$ [resp., $f\in\mathcal{L}_{X,q,s,d}(\rn)$],
$f$ satisfies \eqref{suita}.
\end{lemma}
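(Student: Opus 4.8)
The plan is to reduce the verification of \eqref{suita} to a standard estimate on the growth of the minimizing polynomials $P_{B(x,r)}^{(s)}(f)$ over dilates of the fixed ball $B(x,r)$, and then to exploit the assumption $s>\frac{n}{p_-}-n-\dz$ together with \eqref{key-c} of Remark \ref{main-remark} to sum a geometric-type series. First I would fix a ball $B:=B(x,r)\in\mathbb{B}(\rn)$ and decompose the annular region $\rn\setminus B$ into the dyadic annuli $2^{k+1}B\setminus 2^kB$ for $k\in\zz_+$, so that
\begin{align*}
\int_{\rn\setminus B}\frac{|f(y)-P_B^{(s)}(f)(y)|}{|x-y|^{n+s+\dz}}\,dy
\ls\sum_{k=0}^{\infty}\frac{1}{(2^kr)^{n+s+\dz}}\int_{2^{k+1}B}
\lf|f(y)-P_B^{(s)}(f)(y)\r|\,dy.
\end{align*}
On each ball $2^{k+1}B$ I would insert and subtract $P_{2^{k+1}B}^{(s)}(f)$, bounding the first piece $\int_{2^{k+1}B}|f-P_{2^{k+1}B}^{(s)}(f)|$ by the defining quantity of $\mathcal{L}_{X,q,s}(\rn)$ (via Hölder's inequality to pass from the $L^1$-average to the $L^q$-average), which produces a factor $\frac{\|\mathbf 1_{2^{k+1}B}\|_X}{|2^{k+1}B|}\,|2^{k+1}B|\,\|f\|_{\mathcal{L}_{X,q,s}(\rn)}$; the second piece, $\int_{2^{k+1}B}|P_{2^{k+1}B}^{(s)}(f)-P_B^{(s)}(f)|$, is handled by the well-known telescoping estimate for minimizing polynomials over successive dilates (a standard fact; cf.\ \cite[Lemma 3.12]{cjy-02} and its antecedents), giving $\ls\sum_{i=0}^{k}$ of comparable averaged oscillation terms over $2^{i+1}B$ and hence a bound of the same shape with at worst an extra polynomial-in-$k$ loss.

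Next I would control the ratio $\frac{\|\mathbf 1_{2^{k+1}B}\|_X}{|2^{k+1}B|}$ relative to $\frac{\|\mathbf 1_B\|_X}{|B|}$: by \eqref{key-c} with $\beta:=2^{k+1}$ we have $\|\mathbf 1_{2^{k+1}B}\|_X\le(2^{k+2})^{n/r_0}\|\mathbf 1_B\|_X$ for any $r_0\in(0,p_-)$, while $|2^{k+1}B|=2^{(k+1)n}|B|$. Assembling these, the $k$-th term of the series is dominated by a constant times
\begin{align*}
\frac{1}{(2^kr)^{n+s+\dz}}\cdot 2^{(k+1)n}|B|\cdot 2^{(k+2)n/r_0}
\frac{\|\mathbf 1_B\|_X}{|B|}\,\|f\|_{\mathcal{L}_{X,q,s}(\rn)}
\sim 2^{k(n/r_0-s-\dz)}r^{-s-\dz}\|\mathbf 1_B\|_X\,\|f\|_{\mathcal{L}_{X,q,s}(\rn)},
\end{align*}
possibly times a harmless factor polynomial in $k$. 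Choosing $r_0\in(0,p_-)$ close enough to $p_-$ that $\frac{n}{r_0}<n+\dz+s$ — which is exactly permitted by the hypothesis $s\in(\frac{n}{p_-}-n-\dz,\infty)\cap\zz_+$, i.e.\ $\frac{n}{p_-}<n+\dz+s$ — makes the exponent $\frac{n}{r_0}-s-\dz$ strictly negative, so the series converges and \eqref{suita} follows. For the case $f\in\mathcal{L}_{X,q,s,d}(\rn)$ one runs the identical argument after noting that applying Definition \ref{2d2} with the single ball $B_j:=2^{k+1}B$, $m:=1$, and $\lambda_1:=1$ recovers exactly the same averaged-oscillation bound with $\|f\|_{\mathcal{L}_{X,q,s,d}(\rn)}$ in place of $\|f\|_{\mathcal{L}_{X,q,s}(\rn)}$; alternatively one may just cite Remark \ref{rem-ball-B3} in the concave case and otherwise argue directly.

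The main obstacle I anticipate is the telescoping control of $P_{2^{k+1}B}^{(s)}(f)-P_B^{(s)}(f)$: getting a clean bound requires the standard inequality $\|P_{B'}^{(s)}(g)\|_{L^\infty(B')}\ls\fint_{B'}|g|$ for any ball $B'$ (uniform in $B'$ by dilation invariance of the finite-dimensional polynomial space $\mathcal{P}_s(\rn)$ restricted to a ball), applied to $g:=f-P_{2^{k+1}B}^{(s)}(f)$ on nested balls, and then summing the chain from $B$ up to $2^{k+1}B$. This is routine but must be done carefully so that the accumulated constant grows at most polynomially in $k$ and is thereby absorbed by the geometric decay $2^{k(n/r_0-s-\dz)}$; since the whole point of \cite[Lemma 3.12]{cjy-02} is precisely this kind of estimate, I would simply invoke it and keep the present proof to the reduction and the summation of the series.
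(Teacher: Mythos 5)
Your argument is correct and is essentially the route the paper takes: the paper dispatches this lemma by citing an external result, but its own Lemma \ref{I-JN} (applied with $\lambda:=s+\dz$) is precisely your annular decomposition plus telescoping of minimizing polynomials, after which the series is summed via \eqref{key-c} with $r_0\in(\frac{n}{n+s+\dz},p_-)$, and the $\mathcal{L}_{X,q,s,d}(\rn)$ case reduces to the $\mathcal{L}_{X,q,s}(\rn)$ case exactly as you say by taking $m=1$ in Definition \ref{2d2}. One bookkeeping slip: when you substitute $\frac{\|\mathbf{1}_{2^{k+1}B}\|_X}{|2^{k+1}B|}$ by $2^{(k+2)n/r_0}\frac{\|\mathbf{1}_B\|_X}{|B|}$ you drop the factor $2^{-(k+1)n}$ coming from $|2^{k+1}B|=2^{(k+1)n}|B|$, so the correct exponent in the geometric series is $\frac{n}{r_0}-n-s-\dz$ rather than $\frac{n}{r_0}-s-\dz$; it is this corrected exponent that your condition $\frac{n}{r_0}<n+s+\dz$ makes strictly negative, and the conclusion stands.
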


As a direct application of both Lemma \ref{lem-suit}
and Proposition \ref{converge-B}, we immediately obtain the following conclusion;
we omit the details here.

\begin{theorem}\label{thm-well-def-T}
Let both $X$ and $p_-$ satisfy Assumption
\ref{assump1}.	
Let $\dz\in(0,1]$, $q\in(1,\fz)$, $s\in(\frac{n}{p_-}-n-\dz,\fz)\cap\zz_+$, and $d\in(0,\fz)$.
Let $\wz T$ be the same as in Remark
\ref{rem-CZ-B}.
Then $\wz T$ is well defined on both $\mathcal{L}_{X,q,s}(\rn)$
and $\mathcal{L}_{X,q,s,d}(\rn)$.
\end{theorem}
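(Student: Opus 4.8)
The plan is to obtain the theorem directly from Lemma \ref{lem-suit}, Proposition \ref{converge-B}, and Remark \ref{rem-CZ-B}, after a few routine consistency checks; there is no serious analytic obstacle, since the real work is already packaged in those three results.

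First I would fix an $f$ lying in $\mathcal{L}_{X,q,s}(\rn)$ or in $\mathcal{L}_{X,q,s,d}(\rn)$. Since $q\in(1,\fz)$, we have $f\in L^q_{\loc}(\rn)\subset L^1_{\loc}(\rn)$, so $f$ is an admissible input in Definition \ref{def-B-CZO} as soon as it satisfies \eqref{suita}. But $\dz\in(0,1]$ and $s\in(\frac{n}{p_-}-n-\dz,\fz)\cap\zz_+$ are precisely the hypotheses of Lemma \ref{lem-suit}, whose conclusion --- in its plain form for $f\in\mathcal{L}_{X,q,s}(\rn)$ and in its bracketed (``resp.'') form for $f\in\mathcal{L}_{X,q,s,d}(\rn)$ --- is exactly that $f$ satisfies \eqref{suita}. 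Hence, for every ball $B_0\in\mathbb{B}(\rn)$, Proposition \ref{converge-B} applies and the defining limit in \eqref{2.12x} exists for almost every $x\in\rn$, so $\widetilde{T}_{B_0}(f)$ is well defined almost everywhere on $\rn$. It is worth noting why $q\in(1,\fz)$, rather than merely $q\in[1,\fz)$, is needed at this step: the local $L^q$-integrability guarantees that the truncated piece $[f-P^{(s)}_{2\widetilde{B}}(f)]\mathbf{1}_{2\widetilde{B}}$ belongs to $L^q(\rn)$ with $q\in(1,\fz)$, which is what makes the a.e.\ identity \eqref{T1} inside the proof of Proposition \ref{converge-B} legitimate through Remark \ref{rem-2.15}.

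Next I would upgrade ``$\widetilde{T}_{B_0}(f)$ is well defined a.e.'' to ``$\widetilde{T}$ is well defined on the space'' by invoking the two relevant compatibility facts. Independence of the auxiliary ball is Remark \ref{rem-CZ-B}: $\widetilde{T}_{B_0}(f)-\widetilde{T}_{B_1}(f)\in\mathcal{P}_s(\rn)$ after changing values on a null set, so the coset $\widetilde{T}(f)+\mathcal{P}_s(\rn)$ does not depend on $B_0$, in accordance with the convention identifying $f$ with $f+\mathcal{P}_s(\rn)$. It then remains to check that $\widetilde{T}$ respects this identification, i.e.\ $\widetilde{T}(f+P)-\widetilde{T}(f)\in\mathcal{P}_s(\rn)$ for every $P\in\mathcal{P}_s(\rn)$: indeed $P$ trivially satisfies \eqref{suita} because its $(s)$-minimizing polynomial over any ball is $P$ itself, so the integrand vanishes, and feeding $P$ into the decomposition \eqref{limit} annihilates both the $T_{(\widetilde{K})}$-term and $E_{\widetilde{B}}$, leaving $\widetilde{T}_{B_0}(P)$ equal to $P_1(\widetilde{B},B_0;P)+P_2(\widetilde{B},B_0;P)\in\mathcal{P}_s(\rn)$ on every ball $\widetilde{B}$, hence a single global element of $\mathcal{P}_s(\rn)$; linearity of $\widetilde{T}_{B_0}$ then gives the claim.

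The only point that deserves attention, rather than being a genuine difficulty, is organizing the meaning of ``well defined'': it must be read at the level of cosets modulo $\mathcal{P}_s(\rn)$, so the assertion of the theorem is the conjunction of admissibility via \eqref{suita}, a.e.\ existence of the defining limit, independence of $B_0$, and independence of the chosen representative of $f$ modulo $\mathcal{P}_s(\rn)$ --- all four supplied above. The argument for $\mathcal{L}_{X,q,s,d}(\rn)$ is word for word the same, using the bracketed alternative of Lemma \ref{lem-suit} throughout.
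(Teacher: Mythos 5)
Your proposal is correct and follows exactly the route the paper intends: the paper states that the theorem is a direct application of Lemma \ref{lem-suit} (to verify \eqref{suita}) and Proposition \ref{converge-B} (for a.e.\ existence), with the details omitted. Your additional checks --- independence of the auxiliary ball via Remark \ref{rem-CZ-B} and independence of the representative modulo $\mathcal{P}_s(\rn)$ --- are precisely the omitted routine details, carried out correctly.
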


Next, we establish the boundedness of $\wz T$, respectively, on
$\mathcal{L}_{X,q,s}(\rn)$
and $\mathcal{L}_{X,q,s,d}(\rn)$. To be precise, we show the following two theorems.

\begin{theorem}\label{thm-B-CZ1}
Let both $X$ and $p_-$
satisfy Assumption \ref{assump1}.	
Let $\dz\in(0,1]$, $q\in(1,\fz)$, $s\in(\frac{n}{p_-}-n-\dz,\fz)\cap\zz_+$,
$K$ be an $s$-order standard kernel with regularity $\dz$,
$T$ the same as in Definition \ref{def-s-k} with kernel $K$,
$\wz K$ the same as in \eqref{Kw-K}, and $\wz T$ the same
as in Remark \ref{rem-CZ-B} with
kernel $\wz K$.
Then $\widetilde{T}$ is bounded on
$\mathcal{L}_{X,q,s}(\rn)$, namely,
there exists a positive constant $C$ such that,
for any $f\in \mathcal{L}_{X,q,s}(\rn)$,
$$\lf\|\widetilde{T}(f)\r\|_{\mathcal{L}_{X,q,s}(\rn)}
\le C \|f\|_{\mathcal{L}_{X,q,d}(\rn)}$$
if and only if $T$ has the vanishing moments up to order $s$.
\end{theorem}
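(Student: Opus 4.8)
The plan is to prove the two implications of the biconditional separately, exploiting the pointwise representation formula \eqref{limit} from Proposition \ref{converge-B} as the main bridge. For the ``if'' direction, assume $T$ has the vanishing moments up to order $s$, fix $f\in\mathcal{L}_{X,q,s}(\rn)$ and a ball $B=B(x_B,r_B)$, and use \eqref{limit} with $\widetilde B:=B$ to write $\widetilde T(f)$ modulo $\mathcal{P}_s(\rn)$ as $T_{(\widetilde K)}([f-P^{(s)}_{2B}(f)]\mathbf 1_{2B})+E_{2B}+(\text{polynomials})$; since $P^{(s)}_B$ annihilates polynomials, only the first two terms survive inside the oscillation $[\fint_B|\widetilde T(f)-P^{(s)}_B(\widetilde T(f))|^q]^{1/q}$. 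First I would handle the local term: by the $L^q$-boundedness of $T_{(\widetilde K)}$ (Remark \ref{rem-2.15}) and Hölder's inequality this is controlled by $[\fint_{2B}|f-P^{(s)}_{2B}(f)|^q]^{1/q}$, which after multiplying by $|B|/\|\mathbf 1_B\|_X$ and using the doubling estimate \eqref{key-c} is bounded by $\|f\|_{\mathcal{L}_{X,q,s}(\rn)}$. The far term $E_{2B}$ is already controlled pointwise on $B$ by \eqref{lem-CZ-01}, i.e. by $r_B^{s+\dz}\int_{\rn\setminus 2B}|f-P^{(s)}_{2B}(f)(y)||y-x_B|^{-n-s-\dz}\,dy$, and this tail integral is estimated in the standard Campanato fashion by decomposing $\rn\setminus 2B=\bigcup_{k\ge 1}(2^{k+1}B\setminus 2^kB)$, on each annulus bounding $|f-P^{(s)}_{2B}(f)|$ by $|f-P^{(s)}_{2^{k+1}B}(f)|$ plus the difference of successive minimizing polynomials (whose sup-norm on $2^{k+1}B$ is comparable to $[\fint_{2^{k+1}B}|f-P^{(s)}_{2^{k+1}B}(f)|^q]^{1/q}$), summing the resulting geometric series in $k$ against the $\dz$-gain, and again invoking \eqref{key-c} to compare $\|\mathbf 1_{2^kB}\|_X$ with $\|\mathbf 1_B\|_X$ at polynomial cost in $2^k$ that is beaten by $2^{-k\dz}$.

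The point where the vanishing-moment hypothesis is actually used is precisely in identifying \emph{which} object $\widetilde T$ acts on: the subtracted Taylor polynomial in the definition of $\widetilde T_{B_0}$ is designed so that $\widetilde T(f)$ is well defined, but to control $T_{(\widetilde K)}([f-P^{(s)}_{2B}(f)]\mathbf 1_{2B})$ by the Campanato $L^q$-oscillation one needs the Calderón--Zygmund operator with kernel $\widetilde K$ to map the normalized ``molecule'' $[f-P^{(s)}_{2B}(f)]\mathbf 1_{2B}$ — which has vanishing moments up to order $s$ relative to $2B$ — in a way compatible with the Campanato norm; the condition $T^*(x^\gamma)=0$ for $|\gamma|\le s$ is exactly the statement that $T_{(\widetilde K)}$ (whose formal adjoint is $T$) preserves such cancellation, so that no polynomial correction of degree $\le s$ is lost. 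I would make this rigorous by first proving the implication on a dense subclass (say $f$ with compact support, or $f\in L^q$ with the appropriate moment conditions) where $T_{(\widetilde K)}$ and its adjoint pairing are literal, and then passing to general $f\in\mathcal{L}_{X,q,s}(\rn)$ via the representation \eqref{limit}.

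For the ``only if'' direction I would argue by contradiction: suppose $\widetilde T$ is bounded on $\mathcal{L}_{X,q,s}(\rn)$ but $T^*(x^{\gamma_0})\ne 0$ for some $|\gamma_0|\le s$. The idea is that the polynomial $x^{\gamma_0}$ represents the zero element of $\mathcal{L}_{X,q,s}(\rn)$ (we identify $f$ with $f+\mathcal{P}_s(\rn)$), so boundedness would force $\widetilde T(x^{\gamma_0})\in\mathcal{P}_s(\rn)$; but testing $\widetilde T$ against a suitable normalized atom/molecule $a$ with $\int a\,x^\gamma=0$ for all $|\gamma|\le s$, the duality $(H_X(\rn))^*=\mathcal{L}_{X,q,s,d}(\rn)$ together with the fact (Theorem \ref{dual-T-wT}) that $\widetilde T$ is the adjoint of $T$ gives $\langle \widetilde T(x^{\gamma_0}),a\rangle=\langle x^{\gamma_0},T(a)\rangle=T^*(x^{\gamma_0})\ne 0$, contradicting $\widetilde T(x^{\gamma_0})\in\mathcal{P}_s(\rn)$ (which pairs to zero against any such $a$). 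The main obstacle I anticipate is the ``if'' direction's tail estimate: keeping careful track of the interplay between the $\dz$-regularity gain $2^{-k\dz}$, the growth $\|\mathbf 1_{2^kB}\|_X\lesssim 2^{kn/r}\|\mathbf 1_B\|_X$ from \eqref{key-c} (which requires $r<p_-$ and forces the constraint $s>n/p_--n-\dz$ to guarantee the series converges), and the comparison of minimizing polynomials across scales — this is the technically delicate part, whereas the ``only if'' direction is essentially a soft duality argument once Theorem \ref{dual-T-wT} is available.
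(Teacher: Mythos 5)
Your sufficiency argument follows essentially the same route as the paper (which proves the $\mathcal{L}_{X,q,s,d}(\rn)$ analogue, Theorem \ref{thm-B-CZ}, in detail and treats the present theorem as ``similar''): decompose via \eqref{limit}, control the local piece by the $L^q(\rn)$-boundedness of $T_{(\wz K)}$ together with \eqref{key-c}, and control the tail piece $E_{B}$ by \eqref{lem-CZ-01} combined with the annulus estimate of Lemma \ref{I-JN}. However, you have misplaced the point where the vanishing-moment hypothesis enters. It is not needed to handle $T_{(\wz K)}([f-P^{(s)}_{2B}(f)]\mathbf{1}_{2B})$: plain $L^q$-boundedness suffices there and no cancellation of the input is exploited. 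It is needed so that the piece of $\wz T_{B_0}(f)$ in which $\wz T_{B_0}$ acts on the subtracted polynomial $P^{(s)}_{2\widetilde{B}}(f)$ --- the term $E^{(2)}_{\widetilde{B},B_0,\eta}$ in the proof of Proposition \ref{converge-B}, whose limit is $P_2(\widetilde{B},B_0;f)$ in \eqref{T4} --- actually lies in $\mathcal{P}_s(\rn)$ and is therefore annihilated by the oscillation. By Lemma \ref{Assume}, having $\wz T_{B_0}(y^{\nu})\in\mathcal{P}_s(\rn)$ for all $|\nu|\le s$ is \emph{equivalent} to the vanishing moments, so without that hypothesis $P_2$ need not be a polynomial of degree at most $s$ and the reduction of the oscillation of $\wz T(f)$ to the two good terms fails. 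Your proposed detour through a ``dense subclass'' of compactly supported $f$ is also not viable: such functions are not dense in the Campanato-type space $\mathcal{L}_{X,q,s}(\rn)$, and density would in any case not transfer the estimate to an operator defined through a principal-value limit.

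For the necessity, your first step (boundedness forces $\wz T(x^{\gamma_0})\in\mathcal{P}_s(\rn)$ because $\|x^{\gamma_0}\|_{\mathcal{L}_{X,q,s}(\rn)}=0$) is exactly the paper's. But you then invoke Theorem \ref{dual-T-wT} to get $\langle \wz T(x^{\gamma_0}),a\rangle=\langle x^{\gamma_0},T(a)\rangle$, and this is circular: the proof of Theorem \ref{dual-T-wT} (through Lemma \ref{dual-T-a}) uses the boundedness theorems of this subsection to know that $\wz T(g)\in\mathcal{L}_{X,q',s,d}(\rn)$, and Theorem \ref{dual-T-wT} moreover carries hypotheses (absolutely continuous quasi-norm, Assumption \ref{assump2}, restricted ranges of $q$, $d$, and $s$) that are not assumed in the present theorem. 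What you actually need is only the elementary pairing identity $\int_{\rn}T(a)(x)x^{\gamma}\,dx=\int_{\rn}a(x)\,\wz T_{B_0}(y^{\gamma})(x)\,dx$ for compactly supported $a\in L^2(\rn)$ with vanishing moments up to order $s$; this is \cite[(2.17)]{jtyyz3} and is precisely the content of Lemma \ref{Assume}, which the paper cites to close the argument in one line. Replacing your appeal to Theorem \ref{dual-T-wT} by Lemma \ref{Assume} repairs the logic; the rest of the necessity argument is then sound.
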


\begin{theorem}\label{thm-B-CZ}
Let both $X$ and $p_-$
satisfy Assumption \ref{assump1}.
Let $\dz\in(0,1]$, $q\in(1,\fz)$, $d\in(0,\infty)$,
$s\in(\frac{n}{\min\{p_-,d\}}-n-\dz,\fz)\cap\zz_+$, $K$ be an $s$-order standard kernel
with regularity $\dz$,
$T$ the same as in Definition \ref{def-s-k} with kernel $K$,
$\wz K$ the same as in \eqref{Kw-K}, and $\wz T$ the same
as in Remark \ref{rem-CZ-B} with
kernel $\wz K$.
Then $\widetilde{T}$ is bounded on
$\mathcal{L}_{X,q,s,d}(\rn)$, namely,
there exists a positive constant $C$ such that,
for any $f\in \mathcal{L}_{X,q,s,d}(\rn)$,
$$\lf\|\widetilde{T}(f)\r\|_{\mathcal{L}_{X,q,s,d}(\rn)}
\le C \|f\|_{\mathcal{L}_{X,q,s,d}(\rn)}$$
if and only if $T$ has the vanishing moments up to order $s$.
\end{theorem}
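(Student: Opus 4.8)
The plan is to prove the stated equivalence by establishing the two implications separately. In both directions I first record that, by Lemma~\ref{lem-suit} (applicable since $s>\frac{n}{\min\{d,p_-\}}-n-\dz\geq\frac{n}{p_-}-n-\dz$), every $f\in\mathcal{L}_{X,q,s,d}(\rn)$ satisfies \eqref{suita}, so that $\widetilde{T}(f)=\widetilde{T}_{B_0}(f)$ is well defined modulo $\mathcal{P}_s(\rn)$ by Proposition~\ref{converge-B} and Remark~\ref{rem-CZ-B}, and that the hypothesis $q\in(1,\fz)$ makes Remark~\ref{rem-2.15} available for the kernel $\widetilde{K}$. \emph{For the necessity direction}, fix $\gz\in\zz_+^n$ with $|\gz|\leq s$. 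Since $x^{\gz}\in\mathcal{P}_s(\rn)$, one has $x^{\gz}-P^{(s)}_B(x^{\gz})=0$ for every ball $B$, so $x^{\gz}\in\mathcal{L}_{X,q,s,d}(\rn)$ with $\|x^{\gz}\|_{\mathcal{L}_{X,q,s,d}(\rn)}=0$. If $\widetilde{T}$ is bounded on $\mathcal{L}_{X,q,s,d}(\rn)$, then $\|\widetilde{T}(x^{\gz})\|_{\mathcal{L}_{X,q,s,d}(\rn)}=0$; since a function agreeing on every ball with a polynomial of degree at most $s$ is such a polynomial, and since $\mathcal{L}_{X,q,s,d}(\rn)$-elements are identified with their $\mathcal{P}_s(\rn)$-cosets, this forces $\widetilde{T}(x^{\gz})\in\mathcal{P}_s(\rn)$. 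For any $a\in L^2(\rn)$ with compact support and $\int_{\rn}a(x)x^{\bz}\,dx=0$ for all $|\bz|\leq s$, a direct computation starting from \eqref{2.12x} (dominated convergence, Fubini's theorem, and the cancellation of $a$ absorbing the Taylor-correction term in \eqref{2.12x}, in the spirit of the proof of Theorem~\ref{dual-T-wT}) rewrites $T^*(x^{\gz})=\int_{\rn}T(a)(x)x^{\gz}\,dx$ as $\int_{\rn}a(x)\widetilde{T}(x^{\gz})(x)\,dx$, which vanishes because $\widetilde{T}(x^{\gz})\in\mathcal{P}_s(\rn)$ and $a$ has vanishing moments up to order $s$. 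Hence $T$ has the vanishing moments up to order $s$.

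\emph{For the sufficiency direction}, assume $T$ has the vanishing moments up to order $s$, fix $f\in\mathcal{L}_{X,q,s,d}(\rn)$, and fix a testing configuration $m\in\nn$, $\{B_j\}_{j=1}^m\subset\mathbb{B}(\rn)$ (with radii $r_j$) and $\{\lz_j\}_{j=1}^m\subset[0,\fz)$, $\sum_j\lz_j\neq0$, for the quasi-norm $\|\widetilde{T}(f)\|_{\mathcal{L}_{X,q,s,d}(\rn)}$. For each $j$ I would invoke the decomposition \eqref{limit} with $\widetilde{B}:=B_j$,
$$\widetilde{T}(f)=T_{(\widetilde{K})}\lf(\lf[f-P^{(s)}_{2B_j}(f)\r]\mathbf{1}_{2B_j}\r)+E_{B_j}+P_1(B_j,B_0;f)+P_2(B_j,B_0;f),$$
where $P_1(B_j,B_0;f),P_2(B_j,B_0;f)\in\mathcal{P}_s(\rn)$ — and it is exactly in the membership of $P_2(B_j,B_0;f)=\widetilde{T}(P^{(s)}_{2B_j}(f))$ in $\mathcal{P}_s(\rn)$ that the vanishing moments of $T$ are used. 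Since $\fint_{B_j}|\widetilde{T}(f)-P^{(s)}_{B_j}(\widetilde{T}(f))|^q$ is comparable to $\inf_{P\in\mathcal{P}_s(\rn)}\fint_{B_j}|\widetilde{T}(f)-P|^q$, the polynomials $P_1,P_2$ drop out, and combining the $L^q$-boundedness of $T_{(\widetilde{K})}$ (Remark~\ref{rem-2.15}) with \eqref{lem-CZ-01}, the annular decomposition $\rn\setminus2B_j=\bigcup_{l\in\nn}(2^{l+1}B_j\setminus2^lB_j)$, and the standard comparison of minimizing polynomials over nested balls, I would obtain
$$\lf[\fint_{B_j}\lf|\widetilde{T}(f)-P^{(s)}_{B_j}(\widetilde{T}(f))\r|^q\,dx\r]^{1/q}\ls\lf[\fint_{2B_j}\lf|f-P^{(s)}_{2B_j}(f)\r|^q\,dx\r]^{1/q}+\sum_{l\in\nn}2^{-l(s+\dz)}\lf[\fint_{2^{l+1}B_j}\lf|f-P^{(s)}_{2^{l+1}B_j}(f)\r|^q\,dx\r]^{1/q}.$$

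It then remains to multiply by $\lz_j|B_j|/\|\mathbf{1}_{B_j}\|_X$, sum over $j$, and divide by $\|\{\sum_i(\lz_i/\|\mathbf{1}_{B_i}\|_X)^d\mathbf{1}_{B_i}\}^{1/d}\|_X$. The contribution of the first term is handled by recognizing it, via $|B_j|\sim|2B_j|$, \eqref{key-c} with $\bz=2$, and Remark~\ref{main-remark} with $\bz=2$, as a constant multiple of the defining supremum of $\|f\|_{\mathcal{L}_{X,q,s,d}(\rn)}$ for the family $\{2B_j\}_{j=1}^m$. For the second term, fix $l\in\nn$ and set $\widetilde{\lz}_j^{(l)}:=\lz_j|B_j|\,\|\mathbf{1}_{2^{l+1}B_j}\|_X/(|2^{l+1}B_j|\,\|\mathbf{1}_{B_j}\|_X)$, so that $\lz_j|B_j|/\|\mathbf{1}_{B_j}\|_X=\widetilde{\lz}_j^{(l)}|2^{l+1}B_j|/\|\mathbf{1}_{2^{l+1}B_j}\|_X$ while $\widetilde{\lz}_j^{(l)}/\|\mathbf{1}_{2^{l+1}B_j}\|_X=2^{-(l+1)n}\lz_j/\|\mathbf{1}_{B_j}\|_X$; applying the defining supremum of $\|f\|_{\mathcal{L}_{X,q,s,d}(\rn)}$ to the family $\{2^{l+1}B_j\}_{j=1}^m$ with weights $\{\widetilde{\lz}_j^{(l)}\}_{j=1}^m$ and then Remark~\ref{main-remark} with $\bz=2^{l+1}$ and a suitable $r\in(0,\min\{d,p_-\})$, the $l$-th summand is bounded by a constant multiple of $2^{-l(s+\dz+n-n/r)}\|f\|_{\mathcal{L}_{X,q,s,d}(\rn)}\,\|\{\sum_i(\lz_i/\|\mathbf{1}_{B_i}\|_X)^d\mathbf{1}_{B_i}\}^{1/d}\|_X$. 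Because $s>\frac{n}{\min\{d,p_-\}}-n-\dz$, I can choose $r$ close enough to $\min\{d,p_-\}$ that $s+\dz+n-n/r>0$, whence $\sum_{l\in\nn}2^{-l(s+\dz+n-n/r)}<\fz$; summing over $l$ and taking the supremum over all configurations gives $\|\widetilde{T}(f)\|_{\mathcal{L}_{X,q,s,d}(\rn)}\leq C\|f\|_{\mathcal{L}_{X,q,s,d}(\rn)}$.

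The main obstacle I anticipate is twofold and lies entirely in the sufficiency half. First, one must justify that, under the vanishing moments of $T$, the term $P_2(B_j,B_0;f)=\widetilde{T}(P^{(s)}_{2B_j}(f))$ genuinely belongs to $\mathcal{P}_s(\rn)$ — without that hypothesis it is only a $\mathrm{BMO}$-type function, which is precisely the mechanism that destroys boundedness and dovetails with the necessity direction. Second, there is the bookkeeping in the last step, namely passing from each $B_j$ to its dilate $2^{l+1}B_j$ through the reweighting $\lz_j\mapsto\widetilde{\lz}_j^{(l)}$ and balancing the polynomial growth $(2^{l+1})^{n/r}$ of the $X$-quasi-norms of the "sum" functions (Remark~\ref{main-remark}) against the decay $2^{-l(s+\dz)}$ furnished by the regularity of the kernel; this balance is exactly what singles out the range $s>\frac{n}{\min\{d,p_-\}}-n-\dz$ as the natural threshold for the argument. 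The estimate for $E_{B_j}$ via \eqref{lem-CZ-01} is routine but requires the somewhat technical comparison of the minimizing polynomials along the chain $\{2^{l+1}B_j\}_{l\in\nn}$.
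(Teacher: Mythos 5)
Your proposal is correct and follows essentially the same route as the paper's proof: the necessity via $\|x^{\gamma}\|_{\mathcal{L}_{X,q,s,d}(\rn)}=0$ together with the equivalence of the vanishing moments with $\widetilde{T}(y^{\gamma})\in\mathcal{P}_s(\rn)$ (Lemma \ref{Assume}, which you re-derive by the duality computation rather than cite), and the sufficiency via the decomposition \eqref{limit}, the $\inf_{P}$ characterization of the oscillation, the $L^q$-boundedness of $T_{(\widetilde{K})}$ for the local part, the kernel regularity plus Lemma \ref{I-JN} for the tail, and the reweighting to dilated balls controlled by Remark \ref{main-remark} with the geometric series converging precisely because $s>\frac{n}{\min\{p_-,d\}}-n-\dz$. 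Your reweighting $\widetilde{\lz}^{(l)}_j$ differs from the paper's $\lambda_{j,k}$ only by the harmless factor $2^{-(l+1)n}$, and your explicit identification of where the vanishing-moment hypothesis enters (namely, in placing $P_2(B_j,B_0;f)$ in $\mathcal{P}_s(\rn)$) is a careful reading of a point the paper delegates to Proposition \ref{converge-B} and the references therein.
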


We only prove Theorem \ref{thm-B-CZ}; since
the proof of Theorem \ref{thm-B-CZ1} is similar, we omit the details here.
To this end, we first state two technique lemmas. The following conclusion
is just \cite[Proposition 2.16]{jtyyz3}.

\begin{lemma}\label{Assume}
Let $s\in\zz_+$, $K$ be an $s$-order standard kernel, and $T$ an
$s$-order Calder\'on--Zygmund singular
integral operator with kernel $K$.
Let $\widetilde{K}$ be the same as in \eqref{Kw-K},
$B_0:=B(x_0,r_0)\subset \rn$ a given ball
with $x_0\in\rn$ and $r_0\in(0,\fz)$,
and, for any $\nu\in\zz_+^n$ with $|\nu|\leq s$,
$\widetilde{T}_{B_0}(y^{\nu})$ the same as in \eqref{2.12x} with $f$
replaced by $y^{\nu}$.
Then $T$ has the vanishing moments up to order $s$ if and only if,
for any $\nu\in\zz_+^n$ with $|\nu|\leq s$,
$\widetilde{T}_{B_0}(y^{\nu})(\cdot)\in\mathcal{P}_s(\rn)$
after changing values on a set of measure zero.
\end{lemma}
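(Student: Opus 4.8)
The plan is to reduce both implications to a single \emph{pairing identity}. First observe that, for each $\nu\in\zz_+^n$ with $|\nu|\le s$, the monomial $y^\nu$ is a polynomial of degree at most $s$ and hence satisfies \eqref{suita} trivially (its numerator there vanishes identically), so $\widetilde T_{B_0}(y^\nu)$ is well defined by Proposition \ref{converge-B}. The key claim is that, for every such $\nu$ and every $a\in L^2(\rn)$ with compact support satisfying $\int_{\rn}a(x)x^\gamma\,dx=0$ for all $\gamma\in\zz_+^n$ with $|\gamma|\le s$ (call such an $a$ \emph{admissible}), one has
\begin{align}\label{eq-pairing}
\int_{\rn}\widetilde T_{B_0}(y^\nu)(x)\,a(x)\,dx=\int_{\rn}T(a)(y)\,y^\nu\,dy.
\end{align}
Granting \eqref{eq-pairing}, the lemma follows at once. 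If $T$ has the vanishing moments up to order $s$, then the right-hand side of \eqref{eq-pairing} is $0$ for every admissible $a$; since, as will come out of the proof of \eqref{eq-pairing}, $\widetilde T_{B_0}(y^\nu)\in L^2_{\loc}(\rn)$, and since the only elements of $L^1_{\loc}(\rn)$ that annihilate all admissible $a$ (equivalently, that coincide with their own minimizing polynomial $P^{(s)}_B$ on every ball $B$) are the polynomials of degree at most $s$, we conclude $\widetilde T_{B_0}(y^\nu)\in\mathcal{P}_s(\rn)$ after changing values on a null set. Conversely, if $\widetilde T_{B_0}(y^\nu)\in\mathcal{P}_s(\rn)$ for every $\nu$ with $|\nu|\le s$, then, for every admissible $a$, the left-hand side of \eqref{eq-pairing} equals $\int_{\rn}P(x)\,a(x)\,dx=0$, where $P:=\widetilde T_{B_0}(y^\nu)\in\mathcal{P}_s(\rn)$ and $a$ annihilates every polynomial of degree at most $s$; hence $\int_{\rn}T(a)(y)\,y^\nu\,dy=0$ for all such $\nu$, i.e., $T$ has the vanishing moments up to order $s$ in the sense of Definition \ref{Def-T-s-v}.

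To prove \eqref{eq-pairing}, I would fix $\nu$ and $a$, choose a ball $\widetilde B$ with center $z$ and radius $r$ large enough that $\supp a\subset\widetilde B$ and $B_0\subset2\widetilde B$, and, for $x\in\widetilde B$ and $\eta\in(0,r)$, split $\widetilde T_{B_0,\eta}(y^\nu)=\widetilde T_{B_0,\eta}(y^\nu\mathbf{1}_{2\widetilde B})+\widetilde T_{B_0,\eta}(y^\nu\mathbf{1}_{\rn\setminus2\widetilde B})$. Since $y^\nu\mathbf{1}_{2\widetilde B}\in L^2(\rn)$, the first summand equals the truncated singular integral $\int_{\rn\setminus B(x,\eta)}\widetilde K(x,y)\,y^\nu\mathbf{1}_{2\widetilde B}(y)\,dy$ minus a correction whose $y$-support lies in $2\widetilde B\setminus B_0$; using the size bounds on $\partial^{\gamma}_{(1)}\widetilde K(x_0,\cdot)$ and the fact that $x_0$ is the center of $B_0$, this correction converges, as $\eta\to0^+$, to some $Q\in\mathcal{P}_s(\rn)$, while by Remark \ref{rem-2.15} the singular-integral part converges in $L^2(\rn)$ and almost everywhere to $T_{(\widetilde K)}(y^\nu\mathbf{1}_{2\widetilde B})$. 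For the second summand, $B(x,\eta)\cap(\rn\setminus2\widetilde B)=\emptyset$ for small $\eta$ and $\mathbf{1}_{\rn\setminus B_0}\equiv1$ on $\rn\setminus2\widetilde B$, so it is the $\eta$-independent integral over $\rn\setminus2\widetilde B$ of $[\widetilde K(x,y)-\sum_{\{\gamma\in\zz_+^n:\ |\gamma|\le s\}}\frac{\partial^{\gamma}_{(1)}\widetilde K(x_0,y)}{\gamma!}(x-x_0)^{\gamma}]y^\nu$, whose integrand---by Taylor's formula, the regularity-$\dz$ estimates on $\widetilde K$, and $|\nu|\le s$---is dominated by $C|x-x_0|^{s+\dz}|y|^{-n-\dz}$ and is therefore absolutely integrable. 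Letting $\eta\to0^+$, this yields, for almost every $x\in\widetilde B$,
\begin{align*}
\widetilde T_{B_0}(y^\nu)(x)&=T_{(\widetilde K)}\lf(y^\nu\mathbf{1}_{2\widetilde B}\r)(x)-Q(x)\\
&\quad+\int_{\rn\setminus2\widetilde B}\lf[\widetilde K(x,y)-\sum_{\{\gamma\in\zz_+^n:\ |\gamma|\le s\}}\frac{\partial^{\gamma}_{(1)}\widetilde K(x_0,y)}{\gamma!}(x-x_0)^{\gamma}\r]y^\nu\,dy
\end{align*}
(the terms built from $f-P^{(s)}_{2\widetilde B}(f)$ in the decomposition underlying Proposition \ref{converge-B} drop out, since $y^\nu=P^{(s)}_{2\widetilde B}(y^\nu)$); in particular $\widetilde T_{B_0}(y^\nu)\in L^2_{\loc}(\rn)$.

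Multiplying this identity by $a$ and integrating over $\rn$, the $\int_{\rn}Q\,a$ term vanishes because $Q\in\mathcal{P}_s(\rn)$ and $a$ has vanishing moments up to order $s$; the $T_{(\widetilde K)}$ term equals $\int_{2\widetilde B}y^\nu T(a)(y)\,dy$ because, by $\widetilde K(x,y)=K(y,x)$ and the $L^2(\rn)$-boundedness of both operators, $T_{(\widetilde K)}$ is the Hilbert-space adjoint of $T$; and in the last term the absolute integrability noted above legitimizes Fubini's theorem, after which the Taylor-polynomial part is killed by $a$ (since $\int_{\rn}a(x)(x-x_0)^{\gamma}\,dx=0$ for $|\gamma|\le s$) and the remainder equals $\int_{\rn\setminus2\widetilde B}y^\nu T(a)(y)\,dy$, because $T(a)(y)=\int_{\rn}K(y,x)a(x)\,dx$ is an absolutely convergent integral for $y\notin2\widetilde B\supset\supp a$. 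Adding these three pieces gives \eqref{eq-pairing}; here one also checks that, on subtracting the order-$s$ Taylor polynomial of $K(y,\cdot)$ at $z$ inside $T(a)(y)=\int_{\rn}K(y,x)a(x)\,dx$, the vanishing moments of $a$ force $|T(a)(y)|\ls|y|^{-n-s-\dz}$ for large $|y|$, so $y^\nu T(a)\in L^1(\rn)$ and every step above is legitimate.

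The main obstacle is precisely this rigorous verification of \eqref{eq-pairing}: one must organize the truncations so that each piece is absolutely convergent before Fubini's theorem and the $L^2$-adjoint relation are invoked, and one must squeeze out of the vanishing moments of $a$ the exact decay of $T(a)$ at infinity needed for $y^\nu T(a)$ to be integrable. Conceptually the point is transparent: the subtracted Taylor polynomial in the definition of $\widetilde T_{B_0}$ is, for each fixed $y$, a polynomial of degree at most $s$ in $x$, hence invisible to any admissible $a$, so $\widetilde T_{B_0}(y^\nu)$ pairs with such $a$ exactly as the unmodified adjoint of $T$ would, and the asserted equivalence is just the statement that the adjoint of $T$ sends every polynomial of degree at most $s$ to a polynomial of degree at most $s$ if and only if $T$ has the vanishing moments up to order $s$. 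The remaining details follow the proof of \cite[Proposition 2.16]{jtyyz3}.
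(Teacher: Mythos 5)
Your argument is correct and is essentially the standard one: the paper itself only cites \cite[Proposition 2.16]{jtyyz3} for this lemma, and your pairing identity is precisely the identity \cite[(2.17)]{jtyyz3} that the paper later quotes in the proof of Lemma \ref{lem-Ta-mole}, so you have in effect reconstructed the referenced proof. The only detail to adjust is the choice of $\widetilde{B}$: since the Taylor remainder of $\widetilde{K}(\cdot,y)$ is expanded at $x_0$, the bound $C|x-x_0|^{s+\delta}|y-x_0|^{-n-s-\delta}$ requires $|y-x_0|\ge 2|x-x_0|$, which is not guaranteed by merely asking $B_0\subset 2\widetilde{B}$; taking $\widetilde{B}$ centered at $x_0$ (and large enough to contain $\supp a$) fixes this at no cost.
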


Similarly to the proof of
\cite[Lemma 2.21]{jtyyz3}, we have the following lemma; we omit the details here.

\begin{lemma}\label{I-JN}
Let $q\in[1,\fz)$, $s\in\zz_+$, and $\lambda\in(s,\fz)$.
Then there exists a positive constant $C$ such that,
for any $f\in L^1_{\loc}(\rn)$ and
any ball $B(x,r)$ with $x\in\rn$ and $r\in(0,\fz)$,
\begin{align*}
&\int_{\rn\setminus B(x,r)}\frac{|f(y)-P_{B(x,r)}^{(s)}(f)(y)|}{|x-y|^{n+\lz}}\,dy\\
&\quad\leq C\sum_{k\in\nn}\lf(2^kr\r)^{-\lambda}
\lf[\fint_{2^{k}B(x,r)}\lf|f(y)
-P_{2^{k}B(x,r)}^{(s)}(f)(y)\r|^q\,dy\r]^{\frac{1}{q}}.
\end{align*}
\end{lemma}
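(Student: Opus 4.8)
The plan is to run a dyadic annular decomposition of $\rn\setminus B(x,r)$ and then telescope the minimizing polynomials across scales. Write $B:=B(x,r)$, $B_k:=2^kB=B(x,2^kr)$ for $k\in\zz_+$, and abbreviate $M_k:=[\fint_{B_k}|f(y)-P_{B_k}^{(s)}(f)(y)|^q\,dy]^{1/q}$. Since $\rn\setminus B=\bigcup_{k\in\nn}(B_k\setminus B_{k-1})$ and $|x-y|\ge2^{k-1}r$ whenever $y\in B_k\setminus B_{k-1}$, the left-hand side is bounded, up to a constant depending only on $n$ and $\lambda$, by $\sum_{k\in\nn}(2^kr)^{-n-\lambda}\int_{B_k}|f(y)-P_B^{(s)}(f)(y)|\,dy$. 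On each $B_k$ I split $|f-P_B^{(s)}(f)|\le|f-P_{B_k}^{(s)}(f)|+|P_{B_k}^{(s)}(f)-P_B^{(s)}(f)|$. For the first piece Hölder's inequality ($q\ge1$) gives $\int_{B_k}|f-P_{B_k}^{(s)}(f)|\le|B_k|^{1-1/q}[\int_{B_k}|f-P_{B_k}^{(s)}(f)|^q]^{1/q}=|B_k|M_k$, and as $|B_k|\sim(2^kr)^n$ this piece contributes exactly $\sum_{k\in\nn}(2^kr)^{-\lambda}M_k$ to the final bound.

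The heart of the matter is the polynomial term $\int_{B_k}|P_{B_k}^{(s)}(f)-P_B^{(s)}(f)|$. I would telescope $P_{B_k}^{(s)}(f)-P_B^{(s)}(f)=\sum_{j=1}^k[P_{B_j}^{(s)}(f)-P_{B_{j-1}}^{(s)}(f)]$ and then invoke two classical facts about polynomials of degree at most $s$, both with constants depending only on $n$, $q$, $s$ and invariant under dilation and translation: (a) for any $Q\in\mathcal P_s(\rn)$, any ball $B'$, and any $\sigma\ge1$, $\fint_{\sigma B'}|Q|\lesssim\sigma^s\fint_{B'}|Q|$; and (b) $[\fint_{B'}|g-P_{B'}^{(s)}(g)|^q]^{1/q}\sim\inf_{P\in\mathcal P_s(\rn)}[\fint_{B'}|g-P|^q]^{1/q}$, which follows from the $L^q$-boundedness on $B'$ of the $L^2$-projection onto $\mathcal P_s(\rn)$. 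Using the triangle inequality $|P_{B_j}^{(s)}(f)-P_{B_{j-1}}^{(s)}(f)|\le|f-P_{B_j}^{(s)}(f)|+|f-P_{B_{j-1}}^{(s)}(f)|$ on $B_{j-1}$, then enlarging the averaging ball from $B_{j-1}$ to $B_j$ in the first summand (at the cost of a factor $2^n$) and to $B_k$ via (a) with $\sigma\sim2^{k-j}$, I obtain $\fint_{B_k}|P_{B_j}^{(s)}(f)-P_{B_{j-1}}^{(s)}(f)|\lesssim2^{(k-j)s}(M_j+M_{j-1})$; fact (b) applied with $B'=B$ and $g=f$ lets me absorb the boundary value $M_0$ into $M_1$. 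Summing over $j$ and multiplying by $|B_k|\sim(2^kr)^n$ and by $(2^kr)^{-n-\lambda}$ yields $(2^kr)^{-n-\lambda}\int_{B_k}|P_{B_k}^{(s)}(f)-P_B^{(s)}(f)|\lesssim(2^kr)^{-\lambda}\sum_{j=1}^k2^{(k-j)s}(M_j+M_{j-1})$.

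To finish I would sum over $k\in\nn$ and exchange the order of summation: $\sum_{k\in\nn}(2^kr)^{-\lambda}\sum_{j=1}^k2^{(k-j)s}(M_j+M_{j-1})=\sum_{j\in\nn}(M_j+M_{j-1})(2^jr)^{-\lambda}\sum_{k\ge j}2^{(k-j)(s-\lambda)}$, and here the hypothesis $\lambda>s$ is precisely what makes the inner geometric series converge to the constant $(1-2^{s-\lambda})^{-1}$. After re-indexing the $M_{j-1}$ sum and once more using $M_0\lesssim M_1$, both resulting sums are dominated by $\sum_{k\in\nn}(2^kr)^{-\lambda}M_k$, which together with the Hölder piece gives the claimed bound. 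I expect the only genuinely delicate point to be the bookkeeping in the double sum and the careful invocation of (a) and (b) at the correct balls so that every constant stays independent of $f$, $x$, and $r$; since (a) and (b) are classical and already underlie \cite[Lemma 2.21]{jtyyz3}, to which this lemma refers, no new idea is needed, but the order of the reductions—peel off $P_{B_k}^{(s)}(f)$, telescope, enlarge balls, then swap sums—must be respected.
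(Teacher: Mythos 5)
Your proof is correct and is exactly the argument the paper has in mind (it refers the reader to \cite[Lemma 2.21]{jtyyz3}, whose proof is this same dyadic-annulus decomposition followed by telescoping the minimizing polynomials, the growth bound $\fint_{\sigma B'}|Q|\lesssim\sigma^s\fint_{B'}|Q|$ for $Q\in\mathcal P_s(\rn)$, and the near-optimality of $P_{B'}^{(s)}$ in $L^q$). The bookkeeping in your double sum is right, and the condition $\lambda\in(s,\infty)$ enters precisely to make the geometric series $\sum_{m\ge0}2^{m(s-\lambda)}$ converge, as you note.
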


Now, we show Theorem \ref{thm-B-CZ}.

\begin{proof}[Proof of Theorem \ref{thm-B-CZ}]
From Theorem \ref{thm-well-def-T}, we deduce that $\widetilde{T}$
is well defined on $\mathcal{L}_{X,q,s,d}(\rn)$.
We now show the necessity. Indeed, if $\widetilde{T}$
is bounded on $\mathcal{L}_{X,q,s,d}(\rn)$,
then, for any $\gamma\in\zz_+^n$ with $|\gamma|\leq s$,
$$
\lf\|\widetilde{T}(x^{\gamma})\r\|_{\mathcal{L}_{X,q,s,d}(\rn)}
\ls \lf\|x^{\gamma}\r\|_{\mathcal{L}_{X,q,s,d}(\rn)}=0
$$
and hence $\widetilde{T}(x^{\gamma})\in \mathcal{P}_s(\rn)$.
Using this and Lemma \ref{Assume}, we find that,
for any $\gamma\in\zz_+^n$ with $|\gamma|\leq s$,
$T^*(x^{\gamma})=0$,
which completes the proof of the necessity.

Next, we show the sufficiency.
To this end, let $T_{(\widetilde{K})}$ be the $s$-order
Calder\'on--Zygmund
singular integral operator with kernel $\widetilde{K}$, $m\in\nn$,
$\{B_j\}_{j=1}^m:=\{B(z_j,r_j)\}_{j=1}^m\subset\mathbb{B}(\rn)$
with both $\{z_j\}_{j=1}^m\subset\rn$ and $\{r_j\}_{j=1}^{m}\subset(0,\fz)$, and
$\{\lambda_j\}_{j=1}^m\subset[0,\infty)$ with
$\sum_{j=1}^m\lambda_j\neq0$.
It is easy to prove that,
for any $g\in L^1_{\mathrm{loc}}(\rn)$ and any ball
$B\in\mathbb{B}(\rn)$,
\begin{align*}
\lf[\fint_B\lf|g(x)-P_B^{(s)}(g)(x)
\r|^q\,dx\r]^{\frac{1}{q}}
\sim\inf_{P\in \mathcal{P}_s(B)}
\lf[\fint_B|g(x)-P(x)|^q\,dx\r]^{\frac{1}{q}};
\end{align*}
see, for instance, \cite[(2.12)]{jtyyz1}.
From this, \eqref{limit}, and $P_{B}^{(s)}(P)=P$ for any
$P\in \mathcal{P}_s(\rn)$ and any ball
$B\in\mathbb{B}(\rn)$, we infer that,
for any given $f\in \mathcal{L}_{X,q,s,d}(\rn)$,
\begin{align}\label{3.16}
&\sum_{j=1}^m\frac{{\lambda}_j|B_j|}{\|{\mathbf{1}}_{B_j}\|_{X}}
\lf[\fint_{B_j}\lf|f(x)-P^{(s)}_{B_j}(f)(x)\r|^q
\,dx\r]^\frac1q\noz\\
&\quad\ls\sum_{j=1}^m\frac{{\lambda}_j|B_j|}
{\|{\mathbf{1}}_{B_j}\|_{X}}
\lf[\fint_{B_j}\lf|T_{(\widetilde{K})}\lf(\lf[f-P_{2B_j}^{(s)}(f)\r]
\mathbf1_{2B_j}\r)(x)\r|^q\,dx\r]^{\frac1q}\noz\\
&\qquad+\sum_{j=1}^m\frac{{\lambda}_j|B_j|}
{\|{\mathbf{1}}_{B_j}\|_{X}}
\lf[\fint_{B_j}\lf|E_{B_j}(x)\r|^q\,dx\r]^{\frac1q}\noz\\
&\quad=:\mathrm{I}_1+\mathrm{I}_2,
\end{align}
where $E_{B_j}$ is the same as in \eqref{Eb} with $\wz B$ therein replaced by
$B_j$ here.

Now, we separately estimate $\mathrm{I}_1$ and $\mathrm{I}_2$.
Indeed, by Remarks \ref{rem-2.15} and \ref{main-remark}, we find that
\begin{align}\label{3.17}
\mathrm{I}_1\ls\sum_{j=1}^m\frac{{\lambda}_j|2B_j|}
{\|{\mathbf{1}}_{2B_j}\|_{X}}
\lf[\fint_{2B_j}\lf|f(x)-P_{2B_j}^{(s)}(f)(x)\r|^q\,dx\r]^{\frac1q}.
\end{align}
This is a desired estimate of $\mathrm{I}_1$.
Besides, for any $j\in\{1,\ldots,m\}$ and $k\in\nn$, let
$\lambda_{j,k}:=\frac{\lambda_j\|\mathbf{1}_{2^kB_j}\|_{X}}
{\|\mathbf{1}_{B_j}\|_{X}}$.
Then, from both \eqref{lem-CZ-01} and Lemma \ref{I-JN}
with $\lambda:=s+\dz$, we deduce that,
for any given $r\in(0,p_-)$,
\begin{align}\label{3.18}
\mathrm{I}_2
&\ls\sum_{j=1}^m\frac{{\lambda}_j|B_j|}{\|{\mathbf{1}}_{B_j}\|_{X}}
r_j^{s+\dz}\int_{\rn\setminus 2B_j}\frac{|f(y)
-P_{2B_j}^{(s)}(f)(y)|}{|y-z_j|^{n+d+\dz}}\,dy\noz\\
&\ls\sum_{j=1}^m\frac{{\lambda}_j|B_j|}{\|{\mathbf{1}}_{B_j}\|_{X}}
\sum_{k\in\nn}\frac1{2^{k(s+\dz)}}
\lf[\fint_{2^{k+1}B_j}\lf|f(y)-P_{2^{k+1}B_j}^{(s)}(f)(y)
\r|^q\,dy\r]^{\frac{1}{q}}\noz\\
&\sim\sum_{k\in\nn}2^{k(-n-s-\dz)}\sum_{j=1}^{m}
\frac{{\lambda}_{j,k}|2^{k+1}B_j|}{\|{\mathbf{1}}_{2^{k+1}B_j}\|_{X}}\noz\\
&\quad\times\lf[\fint_{2^{k+1}B_j}\lf|f(y)-P_{2^{k+1}B_j}^{(s)}(f)(y)
\r|^q\,dy\r]^{\frac{1}{q}}.
\end{align}
This is a desired estimate of $\mathrm{I}_2$.

Moreover, by both Remark \ref{main-remark} and the definition of
$\lambda_{i,k}$
with both $i\in\{1,\ldots,m\}$ and $k\in\nn$, we conclude that,
for any given $r\in(0,\min\{d,p_-\})$ and for any $k\in\nn$,
\begin{align*}
\lf\|\lf\{\sum_{i=1}^m\lf[\frac{\lambda_{i,k}}
{\|\mathbf{1}_{2^kB_i}\|_{X}}\r]^d
\mathbf{1}_{2^kB_i}\r\}^{\frac{1}{d}}\r\|_{X}
&\ls2^{\frac{kn}{r}}\lf\|\lf\{\sum_{i=1}^m
\lf[\frac{\lambda_{i,k}}{\|\mathbf{1}_{2^kB_i}\|_{X}}\r]^d
\mathbf{1}_{B_i}\r\}^{\frac{1}{d}}\r\|_{X}\\
&\sim2^{\frac{kn}{r}}\lf\|\lf\{\sum_{i=1}^m
\lf[\frac{\lambda_{i}}{\|\mathbf{1}_{B_i}\|_{X}}\r]^d
\mathbf{1}_{B_i}\r\}^{\frac{1}{d}}\r\|_{X}.
\end{align*}
From this, \eqref{3.16}, \eqref{3.17}, \eqref{3.18},
and $s\in(\frac{n}{\min\{p_-,d\}}-n-\dz,\fz)$
which further implies that $\min\{d,p_-\}\in(\frac{n}{n+s+\dz},\fz)$,
we deduce that,
for any given $r\in(\frac{n}{n+s+\dz},\min\{d,p_-\})$,
\begin{align*}
&\lf\|\lf\{\sum_{i=1}^m\lf[\frac{{\lambda}_i}{\|{\mathbf{1}}_{B_i}\|_X}\r]^d
{\mathbf{1}}_{B_i}\r\}^{\frac1d}\r\|_{X}^{-1}
\sum_{j=1}^m\frac{{\lambda}_j|B_j|}{\|{\mathbf{1}}_{B_j}\|_{X}}
\lf[\frac1{|B_j|}\int_{B_j}\lf|f(x)-P^{(s)}_{B_j}(f)(x)\r|^q
\,dx\r]^\frac1q\\
&\quad\ls\lf\|\lf\{\sum_{i=1}^m\lf[\frac{\lambda_{i}}
{\|\mathbf{1}_{2B_i}\|_{X}}\r]^d
\mathbf{1}_{2B_i}\r\}^{\frac{1}{d}}\r\|_{X}^{-1}\\
&\qquad\times\sum_{j=1}^m\frac{{\lambda}_j|2B_j|}{\|{\mathbf{1}}_{2B_j}\|_{X}}
\lf[\fint_{2B_j}\lf|f(x)-P_{2B_j}^{(s)}(f)(x)\r|^q\,dx\r]^{\frac1q}\\
&\qquad+\sum_{k\in\nn}2^{k(-n-s-\dz+\frac{n}{r})}
\lf\|\lf\{\sum_{j\in\nn}\lf[\frac{\lambda_{i}}
{\|\mathbf{1}_{2^kB_i}\|_{X}}\r]^d
\mathbf{1}_{2^kB_i}\r\}^{\frac{1}{d}}\r\|_{X}^{-1}\\
&\qquad\times\lf\{\sum_{j=1}^{m}
\frac{{\lambda}_j|2^{k+1}B_j|}{\|{\mathbf{1}}_{2^{k+1}B_j}\|_{X}}
\lf[\fint_{2^{k+1}B_j}\lf|f(y)-P_{2^{k+1}B_j}^{(s)}(f)(y)
\r|^q\,dy\r]^{\frac{1}{q}}\r\}\\
&\quad\ls\|f\|_{\mathcal{L}_{X,q,s,d}(\rn)}.
\end{align*}
This, together with the definition of $\|\cdot\|_{\mathcal{L}_{X,q,s,d}(\rn)}$,
then further implies the desired conclusion, which completes
the proof of the sufficiency and
hence Theorem \ref{thm-B-CZ}.
\end{proof}

\subsection{Relations between $\wz T$ and $T$}\label{sec-CZO-B-02}

In this subsection, we show that $\wz T$
is just the adjoint operator of $T$.
To this end, we first recall the
concept of the Hardy space
$H_X(\rn)$ associated with $X$, which was originally studied in \cite{SHYY}.

In what follows, denote by $\mathcal{S}(\rn)$ the space of all
Schwartz functions equipped with the topology
determined by a well-known countable family of norms, and by
$\mathcal{S}'(\rn)$ its topological dual space equipped with the
weak-$*$ topology. For any $N\in\mathbb{N}$ and
$\phi\in\mathcal{S}(\rn)$, let
\begin{equation*}
p_N(\phi):=\sum_{\alpha\in\mathbb{Z}
^{n}_{+},|\alpha|\leq N}
\sup_{x\in\rn}(1+|x|)^{N+n}|
\partial^{\alpha}\phi(x)|
\end{equation*}
and
\begin{equation*}
\mathcal{F}_N(\rn):=\left\{\phi\in\mathcal{S}
(\rn):\ p_N(\phi)\in[0,1]\right\},
\end{equation*}
where, for any $\alpha:=(\alpha_1,\ldots,\alpha_n)\in\mathbb{Z}^n_+$,
$|\alpha|:=\alpha_1+\cdots+\alpha_n$ and
$\partial^\alpha:=(\frac{\partial}{\partial
x_1})^{\alpha_1}\cdots(\frac{\partial}{\partial
x_n})^{\alpha_n}$. Moreover, for any $r\in\rr$, we denote by $\lfloor r\rfloor$ (resp., $\lceil r\rceil$) the
\emph{maximal} (resp., \emph{minimal})
\emph{integer not greater} (resp., \emph{less}) \emph{than} $r$.

\begin{definition}\label{2d1}
Let $X$ be a ball quasi-Banach
function space and $N\in\nn$ be sufficiently
large. Then the \emph{Hardy
space} $H_X(\rn)$ is defined to be
the set of all the $f\in\mathcal{S}'(\rn)$
such that
$$
\left\|f\right\|_{H_X(\rn)}
:=\left\|\mathcal{M}_N(f)\right\|_{X}<\infty,
$$
where the \emph{non-tangential
grand maximal function}
$\mathcal{M}_{N}(f)$
of $f\in\mathcal{S}'(\rn)$
is defined by setting, for any $x\in\rn$,
\begin{align*}
\mathcal{M}_{N}(f)(x):=
\sup\left\{|f*\phi_{t}(y)|:\
\phi\in\mathcal{F}_{N}(\rn),\
t\in(0,\infty),\ |x-y|<t\right\}.
\end{align*}
\end{definition}

\begin{remark}
Let all the symbols be the same as in Definition \ref{2d1}.
Assume that there exists an $r\in(0,\fz)$ such that
the Hardy--Littlewood maximal operator $\mathcal{M}$ is bounded on $X^{\frac{1}{r}}$.
If $N\in[\lfloor \frac{n}{r}+1\rfloor,\fz)\cap\mathbb{N}$, then,
by \cite[Theorem 3.1]{SHYY}, we find that the Hardy space
$H_X(\rn)$ is independent of
the choice of $N$.
\end{remark}

To obtain the desired result, we still need to recall
some basic concepts.
In what follows, for any $\theta\in(0,\infty)$, the
\emph{powered Hardy--Littlewood
maximal operator} $\mathcal{M}^{(\theta)}$ is defined by setting,
for any measurable function $f$ and $x\in\rn$,
\begin{align*}
\mathcal{M}^{(\theta)}(f)(x):=\lf\{\mathcal{M}\lf(|f|^\theta\r)(x)\r\}^{\frac{1}{\theta}}.
\end{align*}
Moreover, the associate space $X'$ of any given ball
Banach function space $X$ is defined as follows
(see \cite[Chapter 1, Section 2]{BS88} or \cite[p.\,9]{SHYY}).

\begin{definition}\label{de-X'}
For any given ball quasi-Banach function space $X$, its \emph{associate space}
(also called the
\emph{K\"othe dual space}) $X'$ is defined by setting
\begin{equation*}
X':=\lf\{f\in\mathscr M(\rn):\ \|f\|_{X'}<\infty\r\},
\end{equation*}
where, for any $f\in X'$,
$$\|f\|_{X'}:=\sup\lf\{\lf\|fg\r\|_{L^1(\rn)}:\ g\in X,\ \|g\|_X=1\r\}$$
and $\|\cdot\|_{X'}$ is called the \emph{associate norm} of $\|\cdot\|_X$.
\end{definition}

Next, we recall the following mild assumption
about the boundedness of the powered Hardy--Littlewood maximal operator
on ball quasi-Banach function spaces (see, for instance, \cite[(2.9)]{SHYY}), which is used later in this article.

\begin{assumption}\label{assump2}
Let $X$ be a ball quasi-Banach function space.
Assume that there exists an $r_0\in(0,\infty)$ and a
$p_0\in(r_0,\infty)$
such that $X^{1/r_0}$ is a ball Banach function space and there exists a positive constant $C$ such that,
for any $f\in(X^{1/r_0})'$,
\begin{align*}
\lf\|\mathcal{M}^{((p_0/r_0)')}(f)\r\|_{(X^{1/r_0})'}\le
C\lf\|f\r\|_{(X^{1/r_0})'}.
\end{align*}
\end{assumption}

Now, we present the following definitions of both the
$(X,q,s)$-atom and the finite atomic Hardy space
$H_{\mathrm{fin}}^{X,q,s,d}(\rn)$ which are just, respectively,
\cite[Definition 3.5]{SHYY}
and \cite[Definition 1.9]{yyy20}.

\begin{definition}\label{atom}
Let $X$ be a ball quasi-Banach function space, $q\in(1,\infty]$, and
$s\in\zz_+$.
Then a measurable function $a$ on $\rn$ is called an
$(X,q,s)$-\emph{atom}
if there exists a ball $B\in\mathbb{B}(\rn)$ such that
\begin{enumerate}
\item[(i)] $\supp\,(a):=\{x\in\rn:\ a(x)\neq0\}\subset B$;
\item[(ii)]
$\|a\|_{L^q(\rn)}\le\frac{|B|^{\frac{1}{q}}}{\|\mathbf{1}_B\|_X}$;
\item[(iii)] $\int_{\rn} a(x)x^\gamma\,dx=0$ for any
$\gamma:=(\gamma_1,\ldots,\gamma_n)\in\zz_+^n$ with
$|\gamma|\le s$.
\end{enumerate}
\end{definition}

\begin{definition}\label{finatom}
Let both $X$ and $p_-$ satisfy Assumption \ref{assump1}. Assume that
$r_0\in(0,\min\{1,p_-\})$ and $p_0\in(r_0,\infty)$
satisfy Assumption \ref{assump2}.
Let $s\in[\lfloor n(\frac{1}{\min\{1,p_-\}}-1)\rfloor,\infty)\cap\zz_+$,
$d\in(0,r_0]$, and $q\in(\max\{1,p_0\},\infty]$.
The \emph{finite atomic Hardy space}
$H_{\mathrm{fin}}^{X,q,s,d}({{\rr}^n})$,
associated with $X$, is defined to be the set of all finite
linear combinations of $(X,q,s)$-atoms. The quasi-norm
$\|\cdot\|_{H_{\mathrm{fin}}^{X,q,s,d}({{\rr}^n})}$ in
$H_{\mathrm{fin}}^{X,q,s,d}({{\rr}^n})$
is defined by setting, for any $f\in
H_{\mathrm{fin}}^{X,q,s,d}({{\rr}^n})$,
\begin{align*}
\|f\|_{H_{\mathrm{fin}}^{X,q,s,d}({{\rr}^n})}&:=\inf\left\{\left\|
\left[\sum_{j=1}^{N}
\left(\frac{{\lambda}_j}{\|{\mathbf{1}}_{B_j}\|_X}\right)^d
{\mathbf{1}}_{B_j}\right]^{\frac1d}\right\|_{X}
\right\},
\end{align*}
where the infimum is taken over all finite linear combinations of $(X,q,s)$-atoms of
$f$, namely, $N\in\nn$, $f=\sum_{j=1}^{N}\lambda_ja_j$,
$\{\lambda_j\}_{j=1}^{N}\subset[0,\infty)$, and $\{a_j\}_{j=1}^{N}$
being
$(X,q,s)$-atoms supported, respectively,
in the balls $\{B_j\}_{j=1}^{N}\subset\mathbb{B}(\rn)$.
\end{definition}

Recall that $X$ is said to have an \emph{absolutely continuous
quasi-norm} if,
for any $f\in X$ and any measurable subsets $\{E_j\}_{j\in\nn}\subset \rn$
with $E_{j+1}\subset E_j$ for any $j\in\nn$ and $\bigcap_{j\in\nn}
E_j=\emptyset$,
$\|f\mathbf{1}_{E_j}\|_{X}\downarrow 0$ as $j\to\fz$.
The following lemma is just \cite[Theorem 3.14]{zhyy2022}.

\begin{lemma}\label{2t1}
Let $X$, $d$, $q$, and $s$ be the same as in Definition \ref{finatom},
and let $X$ have an absolutely continuous quasi-norm.
Then the dual space of $H_X({{\rr}^n})$, denoted by
$(H_X({{\rr}^n}))^*$,
is $\mathcal{L}_{X,q',s,d}({{\rr}^n})$ with
$\frac{1}{q}+\frac{1}{q'}=1$ in the following sense:
\begin{enumerate}
\item[{\rm (i)}] Let $g\in\mathcal{L}_{X,q',d,s}({{\rr}^n})$.
Then the linear functional
\begin{align}\label{2te1}
L_g:\ f\to \lf\langle L_g,f\r\rangle:=\int_{{{\rr}^n}}f(x)g(x)\,dx,
\end{align}
initially defined for any $f\in
H_{\mathrm{fin}}^{X,q,s,d}({{\rr}^n})$,
has a bounded extension to $H_X({{\rr}^n})$.

\item[{\rm (ii)}] Conversely, any continuous linear
functional on $H_X(\rn)$ arises the same as  in \eqref{2te1}
with a unique $g\in\mathcal{L}_{X,q',s,d}({{\rr}^n})$.
\end{enumerate}
Moreover,
$\|g\|_{\mathcal{L}_{X,q',s,d}({{\rr}^n})}\sim\|L_g\|_{(H_X({{\rr}^n}))^*}$
with the positive equivalence constants independent of both $g$ and $L_g$.
\end{lemma}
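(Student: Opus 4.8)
The plan is to prove the two halves of the duality separately, after first reducing everything to the finite atomic Hardy space. First I would record that, under Assumptions \ref{assump1} and \ref{assump2} together with the absolute continuity of $\|\cdot\|_X$, the space $H_{\mathrm{fin}}^{X,q,s,d}(\rn)$ is dense in $H_X(\rn)$ and its finite atomic quasi-norm is equivalent to $\|\cdot\|_{H_X(\rn)}$ on it (the finite atomic characterization of $H_X(\rn)$). Consequently, proving (i) reduces to bounding $L_g$ on $H_{\mathrm{fin}}^{X,q,s,d}(\rn)$ equipped with the finite atomic quasi-norm and then extending by density; proving (ii) reduces to attaching to each $L\in(H_X(\rn))^*$ a function $g\in L^{q'}_{\loc}(\rn)$, unique modulo $\mathcal{P}_s(\rn)$, with $L=L_g$ on atoms, and then verifying $\|g\|_{\mathcal{L}_{X,q',s,d}(\rn)}\ls\|L\|_{(H_X(\rn))^*}$.

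For (i), let $g\in\mathcal{L}_{X,q',s,d}(\rn)$ and let $f=\sum_{j=1}^{N}\lambda_j a_j$ be a finite combination of $(X,q,s)$-atoms $a_j$ supported in balls $B_j$. Using the vanishing moments of $a_j$ to subtract the minimizing polynomial $P_{B_j}^{(s)}(g)$, then H\"older's inequality with exponents $q$ and $q'$ and Definition \ref{atom}(ii), I obtain
\begin{align*}
\lf|\int_{\rn}a_j(x)g(x)\,dx\r|
&\le\|a_j\|_{L^q(\rn)}\lf[\int_{B_j}\lf|g(x)-P_{B_j}^{(s)}(g)(x)\r|^{q'}\,dx\r]^{1/q'}\\
&\le\frac{|B_j|}{\|\mathbf{1}_{B_j}\|_X}\lf[\frac{1}{|B_j|}\int_{B_j}\lf|g(x)-P_{B_j}^{(s)}(g)(x)\r|^{q'}\,dx\r]^{1/q'}.
\end{align*}
Multiplying by $\lambda_j$, summing over $j$, and invoking the defining supremum of $\|g\|_{\mathcal{L}_{X,q',s,d}(\rn)}$ for the data $\{B_j\}_{j=1}^{N}$ and $\{\lambda_j\}_{j=1}^{N}$ gives $|\langle L_g,f\rangle|\le\|g\|_{\mathcal{L}_{X,q',s,d}(\rn)}\,\|[\sum_{j=1}^{N}(\lambda_j/\|\mathbf{1}_{B_j}\|_X)^d\mathbf{1}_{B_j}]^{1/d}\|_X$; taking the infimum over atomic decompositions of $f$ yields $|\langle L_g,f\rangle|\le\|g\|_{\mathcal{L}_{X,q',s,d}(\rn)}\|f\|_{H_{\mathrm{fin}}^{X,q,s,d}(\rn)}$, and density finishes (i).

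For (ii), fix $L\in(H_X(\rn))^*$. For each $k\in\nn$, every $L^q$ function supported in $B(\mathbf{0},k)$ with vanishing moments up to order $s$ and suitable size is an $(X,q,s)$-atom, so $L$ restricts to a bounded functional on that subspace of $L^q(B(\mathbf{0},k))$; since $(L^q(B(\mathbf{0},k)))^*=L^{q'}(B(\mathbf{0},k))$ (with the customary modification when $q=\infty$) and the vanishing-moment constraint amounts to working modulo $\mathcal{P}_s(\rn)$, there is $g_k\in L^{q'}(B(\mathbf{0},k))$, unique modulo $\mathcal{P}_s(\rn)$, with $L(a)=\int_{\rn}a g_k\,dx$ for every such atom supported in $B(\mathbf{0},k)$. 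Compatibility of the $g_k$ modulo $\mathcal{P}_s(\rn)$ as $k$ grows glues them into a single $g\in L^{q'}_{\loc}(\rn)$, well defined modulo $\mathcal{P}_s(\rn)$, with $L=L_g$ on $H_{\mathrm{fin}}^{X,q,s,d}(\rn)$. To see $g\in\mathcal{L}_{X,q',s,d}(\rn)$, fix $m\in\nn$, balls $\{B_j\}_{j=1}^{m}$, and $\{\lambda_j\}_{j=1}^{m}\subset[0,\fz)$ with $\sum_{j}\lambda_j\neq0$; on each $B_j$ I choose, by $L^{q'}$-duality performed modulo $\mathcal{P}_s(\rn)$, a function $h_j\in L^q(B_j)$ having vanishing moments up to order $s$, with $\|h_j\|_{L^q(\rn)}\le|B_j|^{1/q}/\|\mathbf{1}_{B_j}\|_X$ (so that $h_j$ is an $(X,q,s)$-atom), and with
$$\int_{B_j}h_j(x)g(x)\,dx=\int_{B_j}h_j(x)\lf[g(x)-P_{B_j}^{(s)}(g)(x)\r]\,dx\ge c_0\,\frac{|B_j|}{\|\mathbf{1}_{B_j}\|_X}\lf[\frac{1}{|B_j|}\int_{B_j}\lf|g-P_{B_j}^{(s)}(g)\r|^{q'}\,dx\r]^{1/q'},$$
where $c_0$ is a positive constant depending only on $n$, $s$, and $q$. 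Testing $L$ against $f:=\sum_{j=1}^{m}\lambda_j h_j$ and comparing $\langle L,f\rangle=\sum_{j}\lambda_j\int_{B_j}h_j g\,dx$, which by the displayed choice dominates $c_0$ times the Campanato-type sum $\sum_{j=1}^{m}\frac{\lambda_j|B_j|}{\|\mathbf{1}_{B_j}\|_X}[\frac{1}{|B_j|}\int_{B_j}|g-P_{B_j}^{(s)}(g)|^{q'}\,dx]^{1/q'}$, with the upper bound $|\langle L,f\rangle|\le\|L\|_{(H_X(\rn))^*}\|f\|_{H_X(\rn)}\ls\|L\|_{(H_X(\rn))^*}\|[\sum_{j=1}^{m}(\lambda_j/\|\mathbf{1}_{B_j}\|_X)^d\mathbf{1}_{B_j}]^{1/d}\|_X$ (the atomic upper estimate for $\|\cdot\|_{H_X(\rn)}$), and taking the supremum over all admissible data, I get $\|g\|_{\mathcal{L}_{X,q',s,d}(\rn)}\ls\|L\|_{(H_X(\rn))^*}$. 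Uniqueness of $g$ modulo $\mathcal{P}_s(\rn)$ is immediate from density of $H_{\mathrm{fin}}^{X,q,s,d}(\rn)$ in $H_X(\rn)$, so that $L_{g_1}=L_{g_2}$ forces $g_1-g_2\in\mathcal{P}_s(\rn)$.

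The hard part will be direction (ii): gluing the local pieces $\{g_k\}$ into one global $g$ and, above all, the test-function step showing $g\in\mathcal{L}_{X,q',s,d}(\rn)$. Two points need genuine care there. First, the $L^{q'}$-duality on each ball must be run uniformly modulo $\mathcal{P}_s(\rn)$ (using that $P_{B_j}^{(s)}(g)$ annihilates all polynomials of degree $\le s$ on $B_j$ and that the projection onto $\mathcal{P}_s(\rn)$ is bounded on $L^q(B_j)$ with constant independent of $B_j$), so that the selected $h_j$ are honest $(X,q,s)$-atoms with the sharp size normalization. Second, the inequality $\|\sum_{j}\lambda_j h_j\|_{H_X(\rn)}\ls\|[\sum_{j}(\lambda_j/\|\mathbf{1}_{B_j}\|_X)^d\mathbf{1}_{B_j}]^{1/d}\|_X$ is exactly where Assumptions \ref{assump1} and \ref{assump2} are consumed, via the boundedness of the powered Hardy--Littlewood maximal operator and estimates of the type \eqref{key-c}. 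Everything else --- H\"older's inequality, the minimizing-polynomial bookkeeping, and the finite atomic characterization (hence density) of $H_X(\rn)$ --- is routine once these two ingredients are in hand.
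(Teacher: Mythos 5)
The paper does not actually prove this lemma: it is quoted verbatim from \cite[Theorem 3.14]{zhyy2022}, so there is no in-paper argument to compare against. Your proof is essentially the standard Fefferman--Stein/Taibleson--Weiss duality scheme that underlies the cited theorem, and it is sound: part (i) is the routine H\"older-plus-vanishing-moments estimate against the defining supremum of $\|\cdot\|_{\mathcal{L}_{X,q',s,d}(\rn)}$, and part (ii) correctly handles the one genuinely delicate point, namely running the $L^{q'}(B_j)$--$L^{q}(B_j)$ duality \emph{modulo} $\mathcal{P}_s(\rn)$ (using that $g-P^{(s)}_{B_j}(g)$ is orthogonal to $\mathcal{P}_s(\rn)$ on $B_j$ and that the projection onto $\mathcal{P}_s(\rn)$ is bounded on $L^q(B_j)$ uniformly in $B_j$) so that the extremizers $h_j$ are honest $(X,q,s)$-atoms. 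Two caveats. First, your argument consumes as black boxes both the finite atomic characterization $\|\cdot\|_{H^{X,q,s,d}_{\mathrm{fin}}}\sim\|\cdot\|_{H_X(\rn)}$ (needed for the extension in (i)) and the atomic reconstruction estimate $\|\sum_j\lambda_jh_j\|_{H_X(\rn)}\lesssim\|\{\sum_j(\lambda_j/\|\mathbf{1}_{B_j}\|_X)^d\mathbf{1}_{B_j}\}^{1/d}\|_X$ (needed for the upper bound in (ii)); these are exactly where Assumptions \ref{assump1} and \ref{assump2}, the restriction $d\in(0,r_0]$, and the absolute continuity of the quasi-norm enter, and they are theorems of comparable depth to the lemma itself, so in a self-contained write-up they would have to be cited precisely. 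Second, when $q=\infty$ (which Definition \ref{finatom} permits) the identification $(L^\infty(B))^*=L^1(B)$ fails; the standard fix is to construct $g$ via $(X,q_0,s)$-atoms for some finite $q_0>\max\{1,p_0\}$ and then use the independence of the atomic Hardy space on the atom exponent --- your parenthetical ``customary modification'' should be spelled out as this reduction.
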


The following proposition is just a slight modification of
\cite[Theorem 3.11]{wyy}, which shows that
$T$ has a unique extension on $H_X(\rn)$; we omit the details here.

\begin{proposition}\label{lem-T-H}
Let both $X$ and $p_-$ satisfy Assumptions \ref{assump1} and \ref{assump2}.
Let $\dz\in(0,1]$, $s\in(\frac{n}{p_-}-n-\dz,\fz)$,
and $T$ be the same as in Definition \ref{Def-T-s-v}.
Further assume that $X$ has an absolutely continuous quasi-norm.
Then $T$ has a unique extension on
$H_X(\rn)$, still denoted by $T$, namely, there exists a
positive constant $C$ such that, for any $f\in H_X(\rn)$,
$$
\|T(f)\|_{H_X(\rn)}\leq C\|f\|_{H_X(\rn)}.
$$	
\end{proposition}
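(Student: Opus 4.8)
The plan is to run the standard atomic argument for the $H_X(\rn)$-boundedness of Calder\'on--Zygmund operators, following the proof of \cite[Theorem 3.11]{wyy}, with only the order $s$ of the vanishing moments to be tracked. Throughout, choose $r_0\in(0,\min\{1,p_-\})$ and $p_0\in(r_0,\fz)$ satisfying Assumption \ref{assump2}, $d\in(0,r_0]$, and $q\in(\max\{1,p_0\},\fz)$; notice that the hypothesis $s>\frac{n}{p_-}-n-\dz$ together with $s\in\zz_+$ forces $s\ge\lfloor n(\frac1{\min\{1,p_-\}}-1)\rfloor$ (when $p_-\ge1$ this is trivial since then $\lfloor n(\frac1{\min\{1,p_-\}}-1)\rfloor=0$, and when $p_-<1$ it follows from $\dz\le1$), so that the atomic decomposition of $H_X(\rn)$ via $(X,q,s)$-atoms is available (see \cite{SHYY}), as is the finite atomic space $H_{\mathrm{fin}}^{X,q,s,d}(\rn)$. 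Since $X$ has an absolutely continuous quasi-norm, $H_{\mathrm{fin}}^{X,q,s,d}(\rn)$ is dense in $H_X(\rn)$; hence it suffices to establish the a priori bound $\|T(f)\|_{H_X(\rn)}\ls\|f\|_{H_X(\rn)}$ for $f\in H_{\mathrm{fin}}^{X,q,s,d}(\rn)$ and then extend $T$ by continuity (the extension being automatically unique).

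The heart of the matter is the uniform estimate $\|T(a)\|_{H_X(\rn)}\ls1$ for every $(X,q,s)$-atom $a$, which I would prove by showing that $T(a)$ is, up to a harmless multiplicative constant, an $(X,q,s,\dz)$-molecule centred at the ball $B:=B(x_B,r_B)$ supporting $a$. Write $\rn=2B\cup\bigcup_{k\in\nn}(2^{k+1}B\setminus2^kB)$. On the central part $2B$, the $L^q(\rn)$-boundedness of $T$ (Remark \ref{rem-2.15}) and Definition \ref{atom}(ii) give $\|T(a)\mathbf{1}_{2B}\|_{L^q(\rn)}\ls\|a\|_{L^q(\rn)}\le|B|^{1/q}/\|\mathbf{1}_B\|_X$. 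On the $k$-th annulus, the cancellation of $a$ up to order $s$ lets one subtract the degree-$s$ Taylor polynomial of $K(x,\cdot)$ at $x_B$; then \eqref{regular2-s} yields, for $x\in2^{k+1}B\setminus2^kB$,
$$
|T(a)(x)|\ls\frac{r_B^{\,s+\dz}}{|x-x_B|^{\,n+s+\dz}}\int_B|a(y)|\,dy
\ls\frac{2^{-k(n+s+\dz)}}{\|\mathbf{1}_B\|_X},
$$
whence $\|T(a)\mathbf{1}_{2^{k+1}B\setminus2^kB}\|_{L^q(\rn)}\ls2^{-k(n+s+\dz-n/q)}|B|^{1/q}/\|\mathbf{1}_B\|_X$, which is the desired molecular $L^q$-decay; and $s>\frac{n}{p_-}-n-\dz$ means precisely that the exponent $n+s+\dz$ exceeds $\frac{n}{\min\{1,p_-\}}$, exactly the threshold needed for the molecular reconstruction to be summable in $X$. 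Finally, since $T$ has the vanishing moments up to order $s$ (Definition \ref{Def-T-s-v}) and $a$ has the vanishing moments up to order $s$, we get $\int_\rn T(a)(x)x^\gamma\,dx=0$ for every $\gamma\in\zz_+^n$ with $|\gamma|\le s$, so $T(a)$ carries the required cancellation; the molecular characterization of $H_X(\rn)$ (valid under Assumptions \ref{assump1} and \ref{assump2}) then gives $\|T(a)\|_{H_X(\rn)}\ls1$.

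To conclude, take $f=\sum_{j=1}^N\lambda_ja_j\in H_{\mathrm{fin}}^{X,q,s,d}(\rn)$, with $\{a_j\}_{j=1}^N$ being $(X,q,s)$-atoms supported, respectively, in balls $\{B_j\}_{j=1}^N\subset\mathbb{B}(\rn)$ and $\{\lambda_j\}_{j=1}^N\subset[0,\fz)$. Using the linearity of $T$, the per-atom bound above, the quasi-triangle inequality, the geometric decay $2^{-k(n+s+\dz-n/q)}$ in the annular sums, and the estimate for the $X$-quasi-norm of sums of dilated characteristic functions coming from Assumption \ref{assump1} (cf.\ Remark \ref{main-remark}), one arrives at
$$
\|T(f)\|_{H_X(\rn)}\ls\lf\|\lf[\sum_{j=1}^N\lf(\frac{\lambda_j}{\|\mathbf{1}_{B_j}\|_X}\r)^d\mathbf{1}_{B_j}\r]^{1/d}\r\|_X;
$$
taking the infimum over all finite atomic representations of $f$ yields $\|T(f)\|_{H_X(\rn)}\ls\|f\|_{H_X(\rn)}$, and a routine density argument then produces the unique bounded extension. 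The main obstacle is the passage from finite atomic sums to general elements of $H_X(\rn)$, i.e.\ justifying both the convergence of $\sum_j\lambda_jT(a_j)$ in $H_X(\rn)$ and the identity $T(\sum_j\lambda_ja_j)=\sum_j\lambda_jT(a_j)$; as in \cite{wyy}, this is handled by combining the uniform molecular estimates just obtained, the completeness of $H_X(\rn)$, and the a priori bound on finite sums, so that here one merely needs to verify that the degree-$s$ versions of these molecular estimates hold, which the hypothesis $s>\frac{n}{p_-}-n-\dz$ and the kernel regularity $\dz$ guarantee.
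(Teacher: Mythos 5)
Your argument is correct and is essentially the proof the paper relies on: the paper omits the details and cites \cite[Theorem 3.11]{wyy}, whose degree-$s$ adaptation is exactly your atoms-to-molecules scheme, and the key per-atom estimates you derive (the central $L^q$ bound via Remark \ref{rem-2.15} and the annular decay $2^{-k(\frac{n}{q'}+s+\dz)}$ from the order-$s$ kernel regularity and the vanishing moments) are precisely those appearing in the paper's own Lemma \ref{lem-Ta-mole}. Your identification of $s>\frac{n}{p_-}-n-\dz$ as the threshold making the molecular decay exponent exceed $n(\frac{1}{p_-}-\frac1q)$, and your use of the density of $H_{\mathrm{fin}}^{X,q,s,d}(\rn)$ granted by the absolutely continuous quasi-norm to pass from finite atomic sums to all of $H_X(\rn)$, match the intended argument.
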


Next, we present the main result of this subsection.

\begin{theorem}\label{dual-T-wT}
Let $X$, $p_-$, $q$, and $d$ be the same as in Definition \ref{finatom}.	
Let $\dz\in(0,1]$,
$$s\in\lf(\frac{n}{d}-n-\dz,\fz\r)\cap\lf[\lf\lfloor n
\lf(\frac{1}{\min\{1,p_-\}}-1\r)\r\rfloor,\infty\r)\cap\zz_+,$$	
$T$ be the same as in
Proposition \ref{lem-T-H},
and $\wz T$ the same as in Remark \ref{rem-CZ-B}.
Further assume that $X$ has an absolutely continuous quasi-norm.
Then $\wz T$ is the adjoint operator of $T$ in the following sense:
for any $g\in \mathcal{L}_{X,q',s,d}(\rn)$ and $f\in H_X(\rn)$,
\begin{align*}
\lf\langle L_g,T(f)\r\rangle=\lf\langle L_{\wz T(g)},f\r\rangle,
\end{align*}	
where $L_g$ is the same as in \eqref{2te1}, $L_{\wz T(g)}$ the same
as in \eqref{2te1} with g replaced by $\wz T(g)$, and
$\frac{1}{q}+\frac{1}{q'}=1$.
\end{theorem}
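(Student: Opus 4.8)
The plan is to reduce the claimed identity
$\langle L_g,T(f)\rangle=\langle L_{\wz T(g)},f\rangle$
to the corresponding identity for finite linear combinations of $(X,q,s)$-atoms and then pass to the limit using density and the boundedness results already at our disposal. More precisely, since $H^{X,q,s,d}_{\mathrm{fin}}(\rn)$ is dense in $H_X(\rn)$ (this is implicit in Lemma \ref{2t1}), and since by Proposition \ref{lem-T-H} the operator $T$ is bounded on $H_X(\rn)$, while by Lemma \ref{2t1} the functionals $L_g$ and $L_{\wz T(g)}$ are bounded on $H_X(\rn)$ (the latter because $\wz T(g)\in\mathcal{L}_{X,q',s,d}(\rn)$ by Theorem \ref{thm-B-CZ}, once one checks that the index restrictions of Theorem \ref{dual-T-wT} imply those of Theorem \ref{thm-B-CZ} with $q$ replaced by $q'$), it suffices to verify
\begin{align*}
\int_{\rn}T(a)(x)g(x)\,dx=\int_{\rn}a(x)\wz T(g)(x)\,dx
\end{align*}
for every $(X,q,s)$-atom $a$ supported in a ball $B_0\in\mathbb{B}(\rn)$.

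For a fixed such atom $a$ supported in $B_0:=B(x_0,r_0)$, the key is to choose $\wz T:=\wz T_{B_0}$ (which is legitimate by Remark \ref{rem-CZ-B}, since changing the defining ball only alters $\wz T(g)$ by an element of $\mathcal{P}_s(\rn)$, and $\int_{\rn}a(x)P(x)\,dx=0$ for all $P\in\mathcal{P}_s(\rn)$ by the vanishing-moment condition (iii) in Definition \ref{atom}). With this choice, I would unwind the definition \eqref{2.12x} of $\wz T_{B_0}(g)$, insert it into $\int a\,\wz T_{B_0}(g)$, and try to move the integration in $x$ inside. The correction polynomial term $\sum_{|\gamma|\le s}\frac{\partial_{(1)}^{\gamma}\wz K(x_0,y)}{\gamma!}(x-x_0)^{\gamma}\mathbf1_{\rn\setminus B_0}(y)$ integrates against $a(x)$ to zero, again by the moment condition on $a$; so
\begin{align*}
\int_{\rn}a(x)\wz T_{B_0}(g)(x)\,dx
=\lim_{\eta\to0^+}\int_{\rn}a(x)\int_{\rn\setminus B(x,\eta)}\wz K(x,y)g(y)\,dy\,dx.
\end{align*}
On the other hand, by Remark \ref{rem-2.15}, $T(a)=\lim_{\eta\to0^+}T_\eta(a)$ in $L^{q'}$ (note $a\in L^q\subset L^2$ since $q>2$ here, or one argues via $L^2$ directly), so $\int T(a)\,g=\lim_{\eta}\int T_\eta(a)(x)g(x)\,dx=\lim_\eta\int\int_{\rn\setminus B(x,\eta)}K(x,y)a(y)\,dy\,g(x)\,dx$. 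Recalling $\wz K(x,y)=K(y,x)$, the two double integrals are formally Fubini-transposes of each other; the actual work is to justify interchanging the order of integration and the $\eta$-limit. Here I would exploit that $a$ has compact support and $g\in L^{q'}_{\mathrm{loc}}$: split $\rn$ into a large ball $2^{N}B_0$ and its complement. On the bounded part one uses the $L^{q'}$-boundedness of the truncated operators uniformly in $\eta$ (Remark \ref{rem-2.15}) together with the cancellation of $a$ to control the near-diagonal contribution, and Fubini applies since everything is absolutely integrable for each fixed $\eta>0$. On the complement $\rn\setminus 2^NB_0$, one uses the atom's cancellation to gain decay: $\int a(y)\wz K(x,y)\,dy=\int a(y)[\wz K(x,y)-(\text{Taylor poly at }x_0)]\,dy$, which by the regularity estimate \eqref{regular2-s} is $O(r_0^{s+\delta}/|x-x_0|^{n+s+\delta})$, and this is paired against $g$ via an estimate of the type in Lemma \ref{I-JN} to see that the tail is finite and that the $\eta$-dependence disappears.

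The main obstacle I anticipate is precisely this justification of Fubini together with the simultaneous passage $\eta\to0^+$ when $g$ is merely locally integrable with Campanato-type growth rather than, say, bounded; the truncation at $B(x,\eta)$ depends on $x$, so one cannot naively swap. The cleanest route is probably to first establish the identity with $g$ replaced by $g\mathbf1_{2^NB_0}\in L^{q'}(\rn)$ — where $T_{(\wz K)}$ is a genuine bounded operator on $L^{q'}$ and the adjoint relation $\int T(a)h=\int a\,T_{(\wz K)}(h)$ for $h\in L^{q'}$, $a\in L^{q}$ is classical — and then let $N\to\infty$, showing that the error terms coming from $g\mathbf1_{\rn\setminus 2^NB_0}$ tend to $0$ by the decay estimate just described, uniformly after the correction polynomial has been subtracted. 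Assembling these pieces, and invoking Remark \ref{rem-CZ-B} once more to absorb the difference between $\wz T_{B_0}$ and the ball-independent $\wz T$ into $\mathcal{P}_s(\rn)$ (which $a$ annihilates), yields the atomic identity and hence, by the density argument, the theorem.
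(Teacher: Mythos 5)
Your overall architecture coincides with the paper's: approximate $f$ by finite linear combinations $f_k$ of atoms (density of $H^{X,\infty,s,d}_{\mathrm{fin}}(\rn)$ in $H_X(\rn)$), use Proposition \ref{lem-T-H} to get $T(f_k)\to T(f)$ in $H_X(\rn)$, use Lemma \ref{2t1} together with Theorem \ref{thm-B-CZ1} [to know $\wz T(g)\in\mathcal{L}_{X,q',s,d}(\rn)$] for the continuity of $L_g$ and $L_{\wz T(g)}$, and thereby reduce everything to a single atom $a$. The integral identity $\int_{\rn}T(a)g=\int_{\rn}a\,\wz T(g)$ is exactly the paper's Lemma \ref{dual-T-a}, and the truncation/Fubini argument you sketch --- replace $g$ by $g\mathbf{1}_{2^NB_0}$, use the classical $L^{q}$--$L^{q'}$ adjoint relation there, control the tail via the cancellation of $a$ and the regularity of $\wz K$, absorb the ambiguity of $\wz T$ modulo $\mathcal{P}_s(\rn)$ by the moment conditions of $a$ --- is precisely the content of the proof of \cite[Lemma 3.19]{jtyyz3} that the paper invokes verbatim at this point.

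There is, however, one genuine gap in your reduction. You assert that it suffices to verify $\int_{\rn} T(a)g=\int_{\rn} a\,\wz T(g)$, which tacitly uses the identity $\langle L_g,T(a)\rangle=\int_{\rn}T(a)(x)g(x)\,dx$. This is not automatic: by Lemma \ref{2t1}, $L_g$ is given by the integral formula \eqref{2te1} only on $H^{X,q,s,d}_{\mathrm{fin}}(\rn)$ and is then extended to $H_X(\rn)$ by continuity, and $T(a)$ is \emph{not} a finite linear combination of atoms, so one must prove that the continuous extension evaluated at $T(a)$ is still computed by the integral. The paper devotes two lemmas to exactly this point: Lemma \ref{lem-Ta-mole}, which shows (using the vanishing moments of $T$) that $T(a)$ is a harmless constant multiple of an $(X,q,s,\tau)$-molecule, and Lemma \ref{thm-dual-T}, which asserts $\langle L_g,m\rangle=\int_{\rn}mg$ for such molecules; the hypotheses on $s$ and $d$ are used precisely to make the admissible range of $\tau$ in these two lemmas nonempty. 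Without this step your argument shows only that two integrals agree, not that either of them equals $\langle L_g,T(a)\rangle$. (A minor aside: your parenthetical ``$q>2$ here'' is not guaranteed by Definition \ref{finatom}; but $a\in L^q(\rn)$ with compact support together with Remark \ref{rem-2.15} already gives $T(a)=\lim_{\eta\to0^+}T_\eta(a)$ in $L^q(\rn)$, which is all you need.)
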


To prove this theorem, we first establish three technique lemmas.
We begin with showing that, for any $(X,q,s)$-atom $a$,
$T(a)$ is a harmless constant multiple of an $(X,q,s,\tau)$-molecule
if and only if $T$ has the vanishing moments up to order $s$.
To this end, we first recall the following definition of $(X,q,s,\tau)$-molecules,
which is just \cite[Definition 3.8]{SHYY}.

\begin{definition}\label{def-mol}
Let $X$ be a ball quasi-Banach function space, $q\in(1,\infty]$,
$s\in\zz_+$, and $\tau\in(0,\fz)$. Then a measurable function $m$ on
$\rn$ is called an
\emph{$(X,q,s,\tau)$-molecule} centered at a ball
$B\in \mathbb{B}(\rn)$ if
\begin{enumerate}
\item[\rm(i)] for any $j\in\zz_+$,
\begin{align*}
\lf\|m\mathbf{1}_{L_j}\r\|_{L^q(\rn)}
\leq 2^{-j\tau}\frac{|B|^{\frac{1}{q}}}{\|\mathbf{1}_{B}\|_{X}},
\end{align*}
where $L_0:=B$ and, for any $j\in\nn$, $L_j:=2^{j}B\setminus
2^{j-1}B$;
\item[\rm(ii)] $\int_{\rn} m(x)x^\gamma\,dx=0$ for any
$\gamma\in\zz_+^n$ with $|\gamma|\le s$.
\end{enumerate}
\end{definition}

\begin{lemma}\label{lem-Ta-mole}
Let both $X$ and $p_-$ satisfy Assumption \ref{assump1}.
Let $s\in\zz_+$, $\dz\in(0,1]$, $T$ be the same as in Definition \ref{def-s-k},
$q\in(1,\fz)$, $\frac{1}{q}+\frac{1}{q'}=1$,
and $\tau\in(0,\frac{n}{q'}+s+\delta]$.
Then the following two statements are equivalent:
\begin{enumerate}
\item[\rm (i)]
there exists a positive constant $C$ such that,
for any $(X,q,s)$-atom $a$ supported in the ball $B\in \mathbb{B}(\rn)$,
$\frac{T(a)}{C}$ is an $(X,q,s,\tau)$-molecule centered at $2B$.
\item[\rm (ii)]
$T$ has the vanishing moments up to order $s$.
\end{enumerate}
\end{lemma}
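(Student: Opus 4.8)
The plan is to prove the two implications separately. The implication $\mathrm{(i)}\Rightarrow\mathrm{(ii)}$ is almost immediate once one unravels the definition of a molecule: let $a\in L^2(\rn)$ have compact support, say $\supp a\subset B\in\mathbb{B}(\rn)$, and satisfy $\int_{\rn}a(x)x^{\gamma}\,dx=0$ for all $\gamma\in\zz_+^n$ with $|\gamma|\le s$. After a routine normalization (and, when $q>2$, a standard density reduction to bounded compactly supported functions with the same vanishing moments), $a$ becomes a scalar multiple of an $(X,q,s)$-atom; hence, by $\mathrm{(i)}$, $T(a)$ is a harmless constant multiple of an $(X,q,s,\tau)$-molecule, so part (ii) of Definition \ref{def-mol} yields $\int_{\rn}T(a)(x)x^{\gamma}\,dx=0$ for all $|\gamma|\le s$. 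By Definition \ref{Def-T-s-v}, this is exactly the assertion that $T$ has the vanishing moments up to order $s$.

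For $\mathrm{(ii)}\Rightarrow\mathrm{(i)}$, which is the core, I would run the classical ``image of an atom is a molecule'' computation adapted to $X$. Fix an $(X,q,s)$-atom $a$ supported in $B:=B(x_0,r)$, set $L_0:=2B$ and $L_j:=2^j(2B)\setminus 2^{j-1}(2B)$ for $j\in\nn$, and check that, up to a uniform constant, $T(a)$ is an $(X,q,s,\tau)$-molecule centered at $2B$. The moment condition (ii) of Definition \ref{def-mol} is immediate from $\mathrm{(ii)}$ applied to $a$. For the size condition on $L_0$ and on the finitely many innermost annuli (all contained in a fixed dilate $cB$ of $B$), I would invoke the $L^q(\rn)$-boundedness of $T$ (Remark \ref{rem-2.15}), the normalization $\|a\|_{L^q(\rn)}\le|B|^{1/q}\|\mathbf{1}_B\|_X^{-1}$, and the doubling estimate \eqref{key-c} to get $\|T(a)\mathbf{1}_{cB}\|_{L^q(\rn)}\ls|2B|^{1/q}\|\mathbf{1}_{2B}\|_X^{-1}$, which already dominates $2^{-j\tau}|2B|^{1/q}\|\mathbf{1}_{2B}\|_X^{-1}$ for those $j$.

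For $x\in L_j$ with $j$ large (so $|x-x_0|\sim 2^jr$), I would write $T(a)(x)=\int_BK(x,y)a(y)\,dy$, subtract from $K(x,\cdot)$ its degree-$s$ Taylor polynomial at $x_0$ — legitimate since $\int_Ba(y)(y-x_0)^{\gamma}\,dy=0$ for all $|\gamma|\le s$ — and estimate the remainder via the regularity \eqref{regular2-s} of the order-$s$ derivatives of $K$ in its second variable (the separation $|x-y|\ge 2|y-x_0|$ needed there holds once $j$ is large enough), obtaining $|T(a)(x)|\ls r^{s+\dz}|x-x_0|^{-n-s-\dz}\|a\|_{L^1(\rn)}\ls r^{s+\dz}(2^jr)^{-n-s-\dz}|B|\|\mathbf{1}_B\|_X^{-1}$. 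Integrating over $L_j$ and using $|B|\sim r^n$ together with \eqref{key-c} then gives
$$\lf\|T(a)\mathbf{1}_{L_j}\r\|_{L^q(\rn)}\ls 2^{-j(\frac{n}{q'}+s+\dz)}\frac{|2B|^{\frac1q}}{\|\mathbf{1}_{2B}\|_X}\le 2^{-j\tau}\frac{|2B|^{\frac1q}}{\|\mathbf{1}_{2B}\|_X},$$
where the last step uses $\tau\le\frac{n}{q'}+s+\dz$; this is precisely condition (i) of Definition \ref{def-mol}.

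The main obstacle will be this size estimate on the far annuli: correctly handling the multivariable Taylor remainder of the kernel and the case analysis required to apply \eqref{regular2-s} (which forces a fixed number of innermost annuli to be absorbed into the $L^q$-bound, as above), together with keeping every implicit constant uniform over all atoms, so that the constant $C$ in $\mathrm{(i)}$ depends only on the structural constants of $K$, $X$, $q$, $s$, and $\dz$. Everything else is bookkeeping with $\|\mathbf{1}_B\|_X$ via \eqref{key-c} and the elementary passage between $\|\cdot\|_{L^1(B)}$ and $\|\cdot\|_{L^q(B)}$.
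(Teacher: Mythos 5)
Your proposal is correct, and the substantive direction (ii) $\Longrightarrow$ (i) is essentially the paper's own argument: $L^q(\rn)$-boundedness of $T$ together with \eqref{key-c} for the central part, and the degree-$s$ Taylor expansion of $K(x,\cdot)$ at the center of $B$ combined with the vanishing moments of $a$ and the regularity \eqref{regular2-s} to get the pointwise decay $|T(a)(x)|\ls r^{\frac{n}{q'}+s+\dz}|x-x_0|^{-n-s-\dz}\|a\|_{L^q(B)}$ on the annuli (the paper outsources exactly this computation to a reference rather than redoing it), after which the exponent bookkeeping and the condition $\tau\le\frac{n}{q'}+s+\dz$ are identical to yours. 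Where you genuinely diverge is in (i) $\Longrightarrow$ (ii): you verify Definition \ref{Def-T-s-v} head-on, normalizing an arbitrary compactly supported $L^2$ function with vanishing moments into an $(X,q,s)$-atom (via a density step when $q>2$) and reading off $\int_{\rn}T(a)(x)x^\gamma\,dx=0$ from part (ii) of Definition \ref{def-mol}; the paper instead pairs the atom with $\widetilde{T}(y^\gamma)$ through the duality identity $\int_{\rn}T(\widetilde{A})(x)x^\gamma\,dx=\int\widetilde{A}\,\widetilde{T}(y^\gamma)$, concludes that $\widetilde{T}(y^\gamma)\in\mathcal{P}_s(\rn)$, and then invokes Lemma \ref{Assume} to translate this back into the vanishing-moment property. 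Your route is more self-contained and avoids the machinery around $\widetilde{T}$, at the cost of having to justify the limiting argument $\int T(a_k)x^\gamma\,dx\to\int T(a)x^\gamma\,dx$ for the $q>2$ approximation (which does go through, using the same far-field decay of $T(a_k-a)$ controlled by $\|a_k-a\|_{L^1}$); the paper's route reuses an already-established equivalence and sidesteps the integrability of $a$ against $L^q$ versus $L^2$ entirely. Both are sound.
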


\begin{proof}
We first show (i) $\Longrightarrow$ (ii).
To this end, let $A\in L^q(\rn)$ be supported in a
ball $B\in \mathbb{B}(\rn)$ and satisfy that,
for any $\gamma\in\zz_+^n$ with $|\gamma|\leq s$,
$\int_{\rn} A(x)x^\gamma\,dx=0$.
Without loss of generality, we may assume that $A$ satisfies that
$\|A\|_{L^q(\rn)}>0$.
Then it is easy to show that
$\widetilde{A}:=\frac{|B|^{\frac{1}{q}}A}{\|\mathbf{1}_B\|_{X}
\|A\|_{L^q(\rn)}}$
is an $(X,q,s)$-atom supported in $B$.
By this, (i) of the present lemma, and \cite[(2.17)]
{jtyyz3}, we conclude that,
for any $\gamma\in\zz_+^n$ with $|\gamma|\leq s$,
$$
\int_{B(x_0,r_0)}\widetilde{A}(x)\widetilde{T}(y^{\gamma})(x)\,dx
=\int_{\rn}T\lf(\widetilde{A}\r)(x)x^{\gamma}\,dx
=0.
$$
Applying this and an argument similar to that used in the proof of
\cite[Proposition 2.16]{jtyyz3},
we find that, for any $\gamma\in\zz_+^n$ with $|\gamma|\leq s$,
$\widetilde{T}(y^{\gamma})\in\mathcal{P}_s(\rn)$
after changing values on a set of measure zero,
which, combined with Proposition \ref{Assume}, further implies that
$T$ has the vanishing moments up to order $s$.
This finishes the proof that (i) $\Longrightarrow$ (ii).	

Next, we show (ii) $\Longrightarrow$ (i).
Let $a$ be an $(X,q,s)$-atom supported in the ball $B\in \mathbb{B}(\rn)$.
We prove that there exists a positive constant $C$,
independent of $a$, such that
$\frac{T(a)}{C}$ is an $(X,q,s,\tau)$-molecule centered at $2B$.
Indeed, using Remark \ref{rem-2.15}, the size condition of $a$,
and \eqref{key-c}, we find that
\begin{align}\label{Ta-i'}
\|T (a)\mathbf{1}_{2B}\|_{L^q(\rn)}
\lesssim \|a\|_{L^q(B)}
\lesssim \frac{|B|^{\frac{1}{q}}}{\|\mathbf{1}_B\|_{X}}
\sim\frac{|2B|^{\frac{1}{q}}}{\|\mathbf{1}_{2B}\|_{X}}
\end{align}
with the implicit positive constants independent of $a$.
Moreover, similarly to the estimation of $T(a)$ in
\cite[p.\,48]{jtyyz3},
we find that, for any $x\notin 2B$,
\begin{align*}
|T(a)(x)|\ls
\frac{r^{\frac{n}{q'}+s+\delta}}{|z-x|^{n+s+\delta}}\|a\|_{L^{q}(B)},
\end{align*}
which, together with both the size condition of $a$
and $\tau\in(0,\frac{n}{q'}+s+\delta]$,
further implies that, for any $j\in\nn$ and $x\in 2^{j+1}B
\setminus 2^{j}B$,
\begin{align*}
\lf|T(a)(x)\r|
&\ls2^{-j(\frac{n}{q'}+s+\delta)}
\lf(2^jr\r)^{-\frac{n}{q}}\frac{|B|^{\frac{1}{q}}}{\|\mathbf{1}_B\|_{X}}\ls
2^{-j\tau}\lf|2^{j+1}B\setminus 2^{j}B\r|^{-\frac{1}{q}}
\frac{|B|^{\frac{1}{q}}}{\|\mathbf{1}_B\|_{X}}.
\end{align*}
From this and \eqref{key-c}, we infer that, for any $j\in\nn$,
\begin{align*}
\lf\|T(a)\mathbf{1}_{2^{j+1}B\setminus 2^{j}B}\r\|_{L^{q}(\rn)}
\lesssim 2^{-j\tau}
\frac{|B|^{\frac{1}{q}}}{\|\mathbf{1}_B\|_{X}}
\sim 2^{-j\tau}
\frac{|2B|^{\frac{1}{q}}}{\|\mathbf{1}_{2B}\|_{X}}.
\end{align*}
This, combined with both \eqref{Ta-i'} and (ii) of the present lemma,
then finishes the proof that (ii) $\Longrightarrow$ (i) and hence the proof of
Lemma \ref{lem-Ta-mole}.
\end{proof}

The following lemma is just \cite[Lemma 3.30]{cjy-02}.

\begin{lemma}\label{thm-dual-T}
Let $X$, $p_-$, $d$, $q$, and $s$ be the same as in Definition \ref{finatom}.
Let
\begin{align*}
\tau\in \lf(n\lf[\frac{1}{p_-}-\frac{1}{q}\r],\fz\r)
\cap\lf(\frac{n}{q'}+s,\fz\r),
\end{align*}
where $\frac{1}{q}+\frac{1}{q'}=1$.
Further assume that $X$ has an absolutely continuous quasi-norm.
Then, for any $(X,q,s,\tau)$-molecule $m$ and any $g\in
\mathcal{L}_{X,q',s,d}(\rn)$,
\begin{align*}
\lf\langle L_g,m\r\rangle=\int_{\rn}m(x)g(x)\,dx,
\end{align*}
where $L_g$ is the same as in \eqref{2te1}.
\end{lemma}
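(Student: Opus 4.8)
\textbf{Proof proposal for Theorem \ref{dual-T-wT}.}

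The plan is to prove the adjoint identity $\langle L_g,T(f)\rangle=\langle L_{\wz T(g)},f\rangle$ first on the dense subspace $H_{\mathrm{fin}}^{X,q,s,d}(\rn)$ of $H_X(\rn)$ and then extend it to all of $H_X(\rn)$ by a density/continuity argument. So fix $g\in\mathcal{L}_{X,q',s,d}(\rn)$ and, as the first reduction, take $f=\sum_{j=1}^N\mu_ja_j$ a finite linear combination of $(X,\wz q,s)$-atoms, where $\wz q\in(\max\{1,p_0\},\fz)$ is chosen large enough that the molecule exponent $\tau:=\frac{n}{\wz q{}'}+s+\dz$ satisfies the hypotheses of Lemma \ref{thm-dual-T} (possible because $\frac{n}{\wz q{}'}+s\to s<\tau$ and $n[\frac1{p_-}-\frac1{\wz q}]\to\frac{n}{p_-}$ as $\wz q\to\fz$, while $s>\frac{n}{p_-}-n-\dz$ forces $\frac{n}{p_-}<n+s+\dz$). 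By linearity it suffices to prove the identity for a single atom $a$ supported in a ball $B\in\mathbb{B}(\rn)$. Since $T$ has the vanishing moments up to order $s$ (this is part of the hypothesis on $T$ coming from Proposition \ref{lem-T-H} via Definition \ref{Def-T-s-v}), Lemma \ref{lem-Ta-mole} gives that $\frac{T(a)}{C}$ is an $(X,\wz q,s,\tau)$-molecule centered at $2B$ for some structural constant $C$. Then, on the one hand, Lemma \ref{2t1}(i) together with Lemma \ref{thm-dual-T} applied to the molecule $T(a)$ yields
\begin{align*}
\lf\langle L_g,T(a)\r\rangle=\int_{\rn}T(a)(x)g(x)\,dx.
\end{align*}

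The second, and heart of the matter, step is to turn the right-hand side into $\langle L_{\wz T(g)},a\rangle=\int_{\rn}a(x)\wz T(g)(x)\,dx$ by a Fubini-type argument that legitimately moves $T$ off of $a$ and onto $g$. Here one uses the representation of $\wz T(g)$ on the ball $\wz B:=2B$ coming from \eqref{limit}: modulo a polynomial in $\mathcal{P}_s(\rn)$ (which integrates to zero against $a$, since $a$ has vanishing moments up to order $s$), $\wz T(g)$ restricted to $\wz B$ equals $T_{(\wz K)}([g-P_{2\wz B}^{(s)}(g)]\mathbf1_{2\wz B})+E_{\wz B}$, where $T_{(\wz K)}$ is the genuine Calderón–Zygmund operator with the transpose kernel $\wz K(x,y)=K(y,x)$ and $E_{\wz B}$ is the tail term in \eqref{Eb}. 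Splitting $\int_{\rn}T(a)g\,dx=\int_{\rn}T(a)[g-P_{2\wz B}^{(s)}(g)]\mathbf1_{4\wz B}\,dx+\int_{\rn}T(a)\,[\cdots]\,dx$ (near and far parts relative to, say, $4\wz B$), the near part is handled by the $L^{\wz q}$–$L^{\wz q{}'}$ duality $\langle T(a),h\rangle=\langle a,T_{(\wz K)}(h)\rangle$ valid for $h\in L^{\wz q'}$ with compact support (Remark \ref{rem-2.15} gives boundedness of both $T$ and $T_{(\wz K)}$ on $L^{\wz q}$ and its dual, and this transpose identity is the defining property of the adjoint kernel); the far part, where the decay $|T(a)(x)|\lesssim r^{n/\wz q'+s+\dz}|z-x|^{-n-s-\dz}\|a\|_{L^{\wz q}}$ from the proof of Lemma \ref{lem-Ta-mole} is available, is matched against the tail term $E_{\wz B}$ plus the far contribution of the transpose operator via the vanishing-moment cancellation of $a$ (subtracting the Taylor polynomial of $\wz K(\cdot,y)$ at the center of $B$), exactly the computation encapsulated in Lemma \ref{I-JN} and the kernel estimates \eqref{size-s'}–\eqref{regular2-s}. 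Combining the two parts and discarding the polynomial correction, one gets $\int_{\rn}T(a)g\,dx=\int_{\rn}a\,\wz T(g)\,dx=\langle L_{\wz T(g)},a\rangle$, where in the last step one must also invoke Lemma \ref{lem-suit} to know $g$ satisfies \eqref{suita} so that $\wz T(g)$ is well defined, and note $\wz T(g)\in\mathcal{L}_{X,q',s,d}(\rn)$ by Theorem \ref{thm-B-CZ} so that $L_{\wz T(g)}$ makes sense.

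The third step is the passage from finite atomic combinations to general $f\in H_X(\rn)$. Both sides of the claimed identity are continuous linear functionals of $f\in H_X(\rn)$: the left side because $L_g$ is bounded on $H_X(\rn)$ (Lemma \ref{2t1}) and $T$ is bounded on $H_X(\rn)$ (Proposition \ref{lem-T-H}), so $|\langle L_g,T(f)\rangle|\lesssim\|g\|_{\mathcal{L}_{X,q',s,d}(\rn)}\|f\|_{H_X(\rn)}$; the right side because $L_{\wz T(g)}$ is bounded on $H_X(\rn)$ with norm $\lesssim\|\wz T(g)\|_{\mathcal{L}_{X,q',s,d}(\rn)}\lesssim\|g\|_{\mathcal{L}_{X,q',s,d}(\rn)}$, again by Lemma \ref{2t1} and Theorem \ref{thm-B-CZ}. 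Since $H_{\mathrm{fin}}^{X,q,s,d}(\rn)$ is dense in $H_X(\rn)$ (this is part of the atomic theory underlying Lemma \ref{2t1}; more precisely, finite atomic combinations are dense in the $\|\cdot\|_{H_X(\rn)}$-norm because $X$ has an absolutely continuous quasi-norm), the identity extends from the dense subspace to all of $H_X(\rn)$, completing the proof.

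\textbf{Main obstacle.} The delicate point is the Fubini/limit interchange in the second step: $T(a)$ is only defined as an a.e.\ limit $\lim_{\eta\to0^+}T_\eta(a)$ and $\wz T(g)$ as $\lim_{\eta\to0^+}\wz T_{B_0,\eta}(g)$, so justifying $\int T(a)g=\int a\,\wz T(g)$ requires carefully controlling the truncation errors and the tail (far-field) integrals simultaneously, making sure the subtracted polynomial terms in the definition of $\wz T$ pair to zero against the cancellation of $a$ and that all the resulting integrals converge absolutely — this is where the regularity estimates \eqref{regular2-s}, the local integrability hypothesis \eqref{suita} for $g$ (guaranteed by Lemma \ref{lem-suit}), and the molecule decay of $T(a)$ from Lemma \ref{lem-Ta-mole} must all be combined with care.
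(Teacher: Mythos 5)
You have proved the wrong statement. The statement at hand is Lemma \ref{thm-dual-T}: for an $(X,q,s,\tau)$-molecule $m$ and $g\in\mathcal{L}_{X,q',s,d}(\rn)$, the value of the \emph{extended} functional $L_g$ at $m$ equals the concrete integral $\int_{\rn}m(x)g(x)\,dx$. What you wrote is a proof of Theorem \ref{dual-T-wT} (the adjoint identity $\langle L_g,T(f)\rangle=\langle L_{\wz T(g)},f\rangle$), and in your very first step you \emph{invoke} Lemma \ref{thm-dual-T} applied to the molecule $T(a)$. As a proof of Lemma \ref{thm-dual-T} this is circular; as a proof of Theorem \ref{dual-T-wT} it is essentially the paper's argument for that theorem, but that was not the task.

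None of the content actually required for Lemma \ref{thm-dual-T} appears in your write-up. Recall that $L_g$ is defined in \eqref{2te1} only on $H_{\mathrm{fin}}^{X,q,s,d}(\rn)$ and then extended to $H_X(\rn)$ by boundedness; the lemma asserts that this abstract extension, evaluated at a molecule, is still given by integration against $g$. A genuine proof must (a) show that $m$ lies in $H_X(\rn)$ and admits an atomic decomposition $m=\sum_j\lambda_j a_j$ whose partial sums converge to $m$ in $H_X(\rn)$ — this is where the hypothesis $\tau>n[\frac{1}{p_-}-\frac{1}{q}]$ and the absolute continuity of the quasi-norm are used, so that $\langle L_g,m\rangle=\sum_j\lambda_j\int_{\rn}a_j g$; (b) show that $\int_{\rn}|m(x)g(x)|\,dx<\infty$, which requires pairing the decay $\|m\mathbf{1}_{L_j}\|_{L^q(\rn)}\le 2^{-j\tau}|B|^{1/q}/\|\mathbf{1}_B\|_X$ against the $L^{q'}$-oscillation control of $g$ on the annuli $2^jB\setminus 2^{j-1}B$ (after exploiting the vanishing moments of $m$ to replace $g$ by $g-P^{(s)}_{2^jB}(g)$, in the spirit of Lemma \ref{I-JN}); this is where the second hypothesis $\tau>\frac{n}{q'}+s$ is needed; and (c) justify interchanging the sum over $j$ with the integral so that $\sum_j\lambda_j\int_{\rn}a_jg=\int_{\rn}mg$. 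Your proposal addresses none of (a)--(c); the entire argument for the stated lemma is missing.
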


The proof of the following lemma is a slight modification of
\cite[Lemma 3.19]{jtyyz3} and we only sketch some important steps.

\begin{lemma}\label{dual-T-a}
Let $X$, $p_-$, $d$, and $q$ be the same as in Definition \ref{finatom}.
Let $\dz\in(0,1]$,
$$s\in\lf(\frac{n}{d}-n-\dz,\fz\r)
\cap\lf[\lf\lfloor n\lf(\frac{1}{\min\{1,p_-\}}-1\r)\r\rfloor,\infty\r)\cap\zz_+,$$
$T$ be the same as in Definition \ref{Def-T-s-v},
and $\wz T$ the same as in Remark \ref{rem-CZ-B}.
Further assume that $X$ has an absolutely continuous quasi-norm.	
Then, for any $g\in \mathcal{L}_{X,q',s,d}(\rn)$
and any $(X,q,s)$-atom $a$,
\begin{align*}
\lf\langle L_g,T(a)\r\rangle=\lf\langle L_{\wz T(g)},a\r\rangle,
\end{align*}
where $L_g$ is the same as in \eqref{2te1}, $L_{\wz T(g)}$ the same
as in \eqref{2te1} with g replaced by $\wz T(g)$, and
$\frac{1}{q}+\frac{1}{q'}=1$.
\end{lemma}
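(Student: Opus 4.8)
The plan is to turn both pairings into genuine Lebesgue integrals and then to match them. Fix an $(X,q,s)$-atom $a$ supported in a ball $B:=B(z,r)\in\mathbb{B}(\rn)$ and $g\in\mathcal{L}_{X,q',s,d}(\rn)$; by Lemma \ref{lem-suit} this $g$ satisfies \eqref{suita}, so $\wz T(g)$ is well defined. First I would fix an exponent
$$\tau\in\lf(\max\lf\{n\lf(\frac{1}{p_-}-\frac{1}{q}\r),\,\frac{n}{q'}+s\r\},\ \frac{n}{q'}+s+\dz\r],$$
which is a nonempty interval because $d\le r_0<\min\{1,p_-\}$ together with $s>\frac{n}{d}-n-\dz$ forces $s>\frac{n}{p_-}-n-\dz$, equivalently $\frac{n}{q'}+s+\dz>n(\frac{1}{p_-}-\frac{1}{q})$. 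Since $T$ has the vanishing moments up to order $s$, Lemma \ref{lem-Ta-mole} furnishes a positive constant $C$, independent of $a$, such that $C^{-1}T(a)$ is an $(X,q,s,\tau)$-molecule centered at $2B$; this molecule also represents the image of $a$ under the $H_X(\rn)$-extension of $T$ from Proposition \ref{lem-T-H}. Then Lemma \ref{thm-dual-T}, whose hypotheses on $\tau$ hold by our choice, yields $\langle L_g,T(a)\rangle=\int_{\rn}T(a)(x)g(x)\,dx$.

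For the other pairing the plan is to insert the pointwise decomposition \eqref{limit} of $\wz T_{B_0}(g)$ taken with $\wz B:=B$, so that for a.e.\ $x\in B$
$$\wz T(g)(x)=T_{(\wz K)}\lf(\lf[g-P_{2B}^{(s)}(g)\r]\mathbf1_{2B}\r)(x)+E_B(x)+P_1(B,B_0;g)(x)+P_2(B,B_0;g)(x),$$
with $T_{(\wz K)}$ the $s$-order Calder\'on--Zygmund singular integral operator with kernel $\wz K$ and $E_B$ as in \eqref{Eb}. Because $\supp a\subset B$ and $a$ has vanishing moments up to order $s$, the terms $P_1,P_2\in\mathcal{P}_s(\rn)$ drop out when integrated against $a$, leaving
$$\int_{\rn}a\,\wz T(g)=\int_Ba\,T_{(\wz K)}\lf(\lf[g-P_{2B}^{(s)}(g)\r]\mathbf1_{2B}\r)+\int_Ba\,E_B.$$
In the first term I would invoke the $L^2$-adjointness of $T$ and $T_{(\wz K)}$, extended to the $L^q$--$L^{q'}$ duality via Remark \ref{rem-2.15} (note $[g-P_{2B}^{(s)}(g)]\mathbf1_{2B}\in L^{q'}(\rn)$ and $a\in L^q(\rn)$ is compactly supported), to rewrite it as $\int_{2B}T(a)(x)[g(x)-P_{2B}^{(s)}(g)(x)]\,dx$.

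The heart of the argument is the second term. I would expand $E_B$ as in \eqref{Eb}, observe that the Taylor piece $\sum_{\{\gamma\in\zz_+^n:\,|\gamma|\le s\}}\frac{\partial_{(1)}^{\gamma}\wz K(z,y)}{\gamma!}(x-z)^{\gamma}$ is once more annihilated by the moments of $a$, and then apply Fubini's theorem. That exchange is legitimate since the kernel regularity gives, for $x\in B$ and $y\notin 2B$, the remainder bound $|\wz K(x,y)-\sum_{\{\gamma\in\zz_+^n:\,|\gamma|\le s\}}\frac{\partial_{(1)}^{\gamma}\wz K(z,y)}{\gamma!}(x-z)^{\gamma}|\ls r^{s+\dz}|z-y|^{-n-s-\dz}$, so the double integral is controlled by $\|a\|_{L^1(B)}\,r^{s+\dz}\int_{\rn\setminus 2B}\frac{|g(y)-P_{2B}^{(s)}(g)(y)|}{|z-y|^{n+s+\dz}}\,dy<\fz$ thanks to \eqref{suita}. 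After the exchange, for each $y\notin 2B$ one has $\int_Ba(x)\wz K(x,y)\,dx=\int_Ba(x)K(y,x)\,dx=T(a)(y)$ (an absolutely convergent integral since $K(y,\cdot)$ is smooth off $\supp a$), so $\int_Ba\,E_B=\int_{\rn\setminus 2B}T(a)(y)[g(y)-P_{2B}^{(s)}(g)(y)]\,dy$. Adding the two contributions gives $\int_{\rn}a\,\wz T(g)=\int_{\rn}T(a)(x)[g(x)-P_{2B}^{(s)}(g)(x)]\,dx$; and since $C^{-1}T(a)$ is an $(X,q,s,\tau)$-molecule with $\tau>\frac{n}{q'}+s$, the integral $\int_{\rn}T(a)\,P_{2B}^{(s)}(g)$ converges absolutely (split $\rn$ into the annuli of $2B$ and pit the molecular size estimate against the degree-$\le s$ growth of $P_{2B}^{(s)}(g)$) and vanishes by the vanishing moments of the molecule. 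Hence $\langle L_{\wz T(g)},a\rangle=\int_{\rn}a\,\wz T(g)=\int_{\rn}T(a)g=\langle L_g,T(a)\rangle$, which is the claimed identity. I expect the main obstacle to be precisely this last paragraph — the Fubini step for $E_B$ and the identification $\int_Ba(x)\wz K(x,y)\,dx=T(a)(y)$ off $2B$ — which is where \eqref{suita} and the kernel estimates enter and which differs only in bookkeeping from \cite[Lemma 3.19]{jtyyz3}.
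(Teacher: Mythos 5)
Your proposal is correct and follows essentially the same route as the paper: the same choice of $\tau$, Lemma \ref{lem-Ta-mole} plus Lemma \ref{thm-dual-T} to realize $\langle L_g,T(a)\rangle$ as $\int_{\rn}T(a)g$, and then the identity $\int_{\rn}T(a)g=\int_{\rn}a\,\widetilde{T}(g)$, which the paper simply defers to \cite[Lemma 3.19]{jtyyz3} but which you carry out explicitly via the decomposition \eqref{limit}, the moment conditions of $a$, $L^{q}$--$L^{q'}$ adjointness for the local piece, and Fubini for the far piece. The only small omission is that you should also record, as the paper does via Theorem \ref{thm-B-CZ1}, that $\widetilde{T}(g)\in\mathcal{L}_{X,q',s,d}(\rn)$, so that $L_{\widetilde{T}(g)}$ in the sense of \eqref{2te1} is a legitimate (bounded) functional.
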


\begin{proof}
By the ranges of both $d$ and $s$, we have $d\in(\frac{n}{n+s+\dz},\fz)$
and $p_-\in(\frac{n}{n+s+\dz},\fz)$. Thus, we can choose a
$\tau\in(n[\frac{1}{\min\{1,p_-\}}-\frac{1}{q}],\frac{n}{q'}+s+\delta]
\cap(\frac{n}{q'}+s,\frac{n}{q'}+s+\delta]$.
Then, using Lemma \ref{lem-Ta-mole}, we conclude that $T(a)$ is
a harmless constant multiple of
an $(X,q,s,\tau)$-molecule.
By this and Lemma \ref{thm-dual-T}, we find that, for any $g\in
\mathcal{L}_{X,q',s,d}(\rn)$
and any $(X,q,s)$-atom $a$,
\begin{align}\label{dual-ta-01}
\lf\langle L_g, T(a)\r\rangle=\int_{\rn}g(x)T(a)(x)\,dx.
\end{align}
Besides, using both $\min\{d,p_-\}\in(\frac{n}{n+s+\dz},\fz)$
and Theorem \ref{thm-B-CZ1},
we obtain $\wz T(g)\in \mathcal{L}_{X,q',s,d}(\rn)$,
which, combined with Lemma \ref{2t1},
further implies that, for any $(X,q,s)$-atom $a$,
\begin{align}\label{dual-ta-03}
\lf\langle L_{\wz T(g)},a\r\rangle=\int_{\rn}\wz T(g)(x)a(x)\,dx.
\end{align}
Moreover, by Lemma \ref{lem-suit} with $\lambda:=n+s+\dz$,
we conclude that, for any $g\in \mathcal{L}_{X,q',s,d}(\rn)$
and any  ball $B:=B(x_0,r_0)\in\mathbb{B}(\rn)$
with $x_0\in\rn$ and $r_0\in(0,\fz)$,
\begin{align}\label{dual-ta-02}
\int_{\rn\setminus B}\frac{|g(y)
-P_{B}^{(s)}(g)(y)|}{|x_0-y|^{n+s+\dz}}\,dy<\fz.
\end{align}
Using \eqref{dual-ta-02} and repeating the proof of
\cite[Lemma 3.19]{jtyyz3}, we conclude that
\begin{align*}
\int_{\rn}g(x)T(a)(x)\,dx=\int_{\rn}\wz T(g)(x)a(x)\,dx,
\end{align*}
which, together with both \eqref{dual-ta-01} and \eqref{dual-ta-03},
further implies that
\begin{align*}
\lf\langle L_g, T(a)\r\rangle=\int_{\rn}T(a)g(x)\,dx
=\int_{\rn}\wz T(g)(x)a(x)\,dx=\lf\langle L_{\wz T(g)},a\r\rangle.
\end{align*}
This finishes the proof of Lemma \ref{dual-T-a}.
\end{proof}

Now, we show Theorem \ref{dual-T-wT}.

\begin{proof}[Proof of Theorem \ref{dual-T-wT}]
To show the present theorem, let $f\in H_X(\rn)$.
Since $X$ has an absolutely continuous quasi-norm, using this and
\cite[Remark 3.12]{SHYY}, we find that
$H_{\mathrm{fin}}^{X,\fz,s,d}(\rn)$ is dense in $H_X(\rn)$.
Thus, there exists a sequence
$\{f_k\}_{k\in\nn}\subset H_{\mathrm{fin}}^{X,\fz,s,d}(\rn)$
such that $f_k\to f$ in $H_X(\rn)$.
Besides, by the definition of
$H_{\mathrm{fin}}^{X,\fz,s,d}(\rn)$, we conclude that,
for any $k\in\nn$,
there exists an $m_k\in\nn$, a sequence $\{a_j^{(k)}\}_{j=1}^{m_k}$ of $(X,\fz,s)$-atoms,
and $\{\lambda_j^{(k)}\}_{j=1}^{m_k}\subset[0,\fz)$ such that
\begin{align}\label{Tg-01}
f_k=\sum_{j=1}^{m_k}\lambda_j^{(k)}a_j^{(k)}.
\end{align}
Moreover, combining this and Proposition \ref{lem-T-H}, we have
$T(f_k)\to T(f)$ in $H_X(\rn)$ as $k\to\infty$.
From this, Lemma \ref{2t1}, \eqref{Tg-01}, and Lemmas
\ref{dual-T-a} and \ref{thm-B-CZ},
we deduce that, for any $g\in \mathcal{L}_{X,q',s,d}(\rn)$,
\begin{align*}
\lf\langle L_g,T(f)\r\rangle
&=\lim_{k\to\fz}\lf\langle L_g,T(f_k)\r\rangle
=\lim_{k\to\fz}\sum_{j=1}^{m_k}\lambda_{j}^{(k)}
\lf\langle L_g,T\lf(a_{j}^{(k)}\r)\r\rangle\\
&=\lim_{k\to\fz}\sum_{j=1}^{m_k}\lambda_{j}^{(k)}
\lf\langle L_{\wz T(g)},a_{j}^{(k)}\r\rangle
=\lim_{k\to\fz}\lf\langle L_{\wz T(g)},f_k\r\rangle
=\lf\langle L_{\wz T(g)},f\r\rangle.
\end{align*}
This finishes the proof of Theorem \ref{dual-T-wT}.
\end{proof}

\section{Applications\label{Appli}}

In this section, we apply all the above main results to
eight concrete examples of ball quasi-Banach
function spaces, namely, weighted Lebesgue spaces (Subsection \ref{weighted}),
variable Lebesgue spaces (Subsection \ref{variable}),
Orlicz spaces (Subsection \ref{Orlicz}),
Orlicz-slice spaces (Subsection \ref{Orlicz-Slice}), Morrey spaces
(Subsection \ref{Morrey}), mixed-norm Lebesgue spaces (Subsection \ref{Mixed-Norm}),
local generalized Herz spaces (Subsection \ref{H-H}),
and mixed Herz spaces (Subsection \ref{M-H}).
Moreover, to the best of our knowledge, all these results are new.
These applications explicitly indicate the generality and the practicability
of the main results of this article and more applications
to new function spaces are obviously possible.

\subsection{Weighted Lebesgue spaces\label{weighted}}

In this subsection, we apply
Theorems \ref{thm-B-CZ1}, \ref{thm-B-CZ}, and \ref{dual-T-wT}
to weighted Lebesgue spaces.
We first present the definitions of
both Muckenhoupt weights and weighted Lebesgue spaces
(see, for instance, \cite[Definitions 7.1.2 and 7.1.3]{G1}).

\begin{definition}
Let $p\in[1,\infty)$ and $w$ be a nonnegative locally integrable
function on $\rn$. Then
$w$ is called an \emph{$A_{p}(\rn)$ weight}, denoted by $w\in A_p(\rn)$, if, when $p\in(1,\infty)$,
\begin{align*}
[w]_{A_{p}\left(\mathbb{R}^{n}\right)}
:=\sup _{B \in \mathbb{B}(\rn)}\lf[\frac{1}{|B|}
\int_{B}w(x)\,dx\r]\left\{\frac{1}{|B|}
\int_{B}[w(x)]^{-\frac{1}{p-1}} \,dx\right\}^{p-1}<\infty,
\end{align*}
and
\begin{align*}
[w]_{A_{1}\left(\mathbb{R}^{n}\right)}
:=\sup _{B \in \mathbb{B}(\rn)}\lf[\frac{1}{|B|}
\int_{B}w(x)\,dx\r]\left\{\mathop{\mathrm{ess\,sup}}_{x\in\rn}
[w(x)]^{-1}\right\}<\infty.
\end{align*}
Moreover, the \emph{class $A_\infty(\rn)$} is defined by setting
$$
A_\infty(\rn):=\bigcup_{p\in[1,\infty)}A_p(\rn).
$$
\end{definition}

\begin{definition}\label{def-wei}
Let $p\in(0,\infty)$ and $w\in A_\infty(\rn)$.
The \emph{weighted Lebesgue space} $L_{w}^{p}(\mathbb{R}^{n})$ is
defined to be the set of all the measurable
functions $f$ on $\rn$ such that
\begin{align*}
\lf\|f\r\|_{L^p_w(\rn)}=\lf[\int_{\rn}|f(x)|^pw(x)\,dx\r]
^{\frac{1}{p}}<\infty.
\end{align*}
\end{definition}

\begin{remark}
It is worth pointing out that, for any $p\in(0,\fz)$ and $w\in A_{\fz}(\rn)$, $L^p_w(\rn)$ is a
ball quasi-Banach space but it may not be a quasi-Banach function space
(see, for instance, \cite[Section 7.1]{SHYY}).
\end{remark}

Using Theorems \ref{thm-B-CZ1}, \ref{thm-B-CZ}, and \ref{dual-T-wT},
we have the following conclusion.

\begin{theorem}\label{thm-wei}
Let $p\in(0,\infty)$ and $w\in A_\infty(\rn)$.
Then both Theorems \ref{thm-B-CZ1} and \ref{thm-B-CZ}
with both $X:=L^p_w(\rn)$ and $p_-:=\frac{p}{q_w}$ hold true;
moreover, Theorem \ref{dual-T-wT} with
$X:=L^p_w(\rn)$, $p_-:=\frac{p}{q_w}$,
$q\in(\max\{1,\frac{p}{q_w},p\},\fz)$, $d\in(0,\min\{1,\frac{p}{q_w}\})$,
and
$$s\in\lf(\frac{n}{d}-n-\dz,\fz\r)
\cap\lf[\lf\lfloor n\lf(\frac{1}{\min\{1,p_-\}}-1\r)\r\rfloor,\infty\r)\cap\zz_+$$
holds true.
\end{theorem}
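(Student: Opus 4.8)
The plan is to obtain Theorem \ref{thm-wei} by a routine verification that, for $X:=L^p_w(\rn)$, all the structural hypotheses required in Theorems \ref{thm-B-CZ1}, \ref{thm-B-CZ}, and \ref{dual-T-wT} hold once $p_-$ is taken to be $p/q_w$, where $q_w:=\inf\{r\in[1,\fz):\ w\in A_r(\rn)\}$ denotes the critical index of $w\in A_\fz(\rn)$. First I would recall, as in \cite[Section~7.1]{SHYY}, that $L^p_w(\rn)$ is indeed a ball quasi-Banach function space; moreover, since $w$ is locally integrable, the dominated convergence theorem shows at once that $L^p_w(\rn)$ has an absolutely continuous quasi-norm, so the extra hypothesis needed both in Theorem \ref{dual-T-wT} and in Proposition \ref{lem-T-H} is in place.

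The main point is to check Assumption \ref{assump1} with $p_-:=p/q_w$. Fix $\theta\in(0,p/q_w)$ and $u\in(1,\fz)$. Then $(L^p_w(\rn))^{1/\theta}=L^{p/\theta}_w(\rn)$ with $p/\theta>q_w\ge1$, so, by the nesting of the Muckenhoupt classes, $w\in A_{p/\theta}(\rn)$; the classical weighted Fefferman--Stein vector-valued maximal inequality (Andersen and John) then gives
\begin{align*}
\lf\|\lf\{\sum_{j\in\nn}\lf[\mathcal{M}(f_j)\r]^u\r\}^{\frac1u}\r\|_{L^{p/\theta}_w(\rn)}
\ls\lf\|\lf(\sum_{j\in\nn}|f_j|^u\r)^{\frac1u}\r\|_{L^{p/\theta}_w(\rn)},
\end{align*}
which is exactly Assumption \ref{assump1}. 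Combining this with the ranges of $\dz$, $q$, and $s$, carried over verbatim from Theorems \ref{thm-B-CZ1} and \ref{thm-B-CZ}, immediately yields the first assertion of Theorem \ref{thm-wei}.

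For the part concerning Theorem \ref{dual-T-wT} I would, in addition, verify Assumption \ref{assump2} and exhibit admissible auxiliary exponents as in Definition \ref{finatom}. Since $d<\min\{1,p/q_w\}$, I may fix $r_0\in(d,\min\{1,p/q_w\})$; then $p/r_0>q_w\ge1$, hence $w\in A_{p/r_0}(\rn)$, so that $X^{1/r_0}=L^{p/r_0}_w(\rn)$ is a ball Banach function space whose associate space is, up to equivalent norms, the weighted Lebesgue space $L^{(p/r_0)'}_{\sigma}(\rn)$ with $\sigma:=w^{1-(p/r_0)'}\in A_{(p/r_0)'}(\rn)$. Choosing $p_0\in(r_0,q)$ appropriately---which is possible because $q>\max\{1,p/q_w,p\}$ and because the reverse-H\"older self-improvement of Muckenhoupt classes forces $q_\sigma<(p/r_0)'$---one makes $\mathcal{M}^{((p_0/r_0)')}$ bounded on $L^{(p/r_0)'}_{\sigma}(\rn)$, i.e., Assumption \ref{assump2} holds; one also has $d\in(0,r_0]$ and $q\in(\max\{1,p_0\},\fz)$. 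With these choices, together with the absolute continuity noted above, Lemma \ref{2t1} and Theorem \ref{dual-T-wT} apply and give the remaining assertion.

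I expect the only genuinely delicate point to be the bookkeeping in the last paragraph: one must confirm that the ranges of $d$, $q$, and $s$ allowed in Theorem \ref{thm-wei} are wide enough to accommodate a single pair $(r_0,p_0)$ meeting simultaneously the requirements $r_0\in(0,\min\{1,p_-\})$, $d\in(0,r_0]$, $p_0\in(r_0,\fz)$ with the powered-maximal bound on $(L^{p/r_0}_w(\rn))'$, and $q>\max\{1,p_0\}$. This is precisely where the quantitative properties of $A_\fz(\rn)$ weights (reverse-H\"older self-improvement and the $w\mapsto w^{1-r'}$ duality) are used, and all of these computations are essentially contained in \cite[Section~7.1]{SHYY} together with the related verifications in \cite{zhyy2022,cjy-02}. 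No estimate from Sections \ref{s1} or \ref{sec-CZO-B} needs to be revisited.
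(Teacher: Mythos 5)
Your proposal follows the same route as the paper's own proof: check that $L^p_w(\rn)$ is a ball quasi-Banach function space with an absolutely continuous quasi-norm, verify Assumptions \ref{assump1} and \ref{assump2} with $p_-:=p/q_w$, $r_0\in(d,\min\{1,p/q_w\})$, and $p_0\in(\max\{p/q_w,p\},q)$, and then invoke Theorems \ref{thm-B-CZ1}, \ref{thm-B-CZ}, and \ref{dual-T-wT}. The paper outsources the verification of both assumptions to a claim in \cite[p.\,28]{CWYZ2020}, whereas you carry out part of it by hand; your treatment of Assumption \ref{assump1} via the nesting of the $A_r(\rn)$ classes and the weighted Fefferman--Stein vector-valued maximal inequality is correct and complete.

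The one genuinely thin spot is precisely the point you flag as ``delicate'': the existence of an admissible $p_0<q$ for Assumption \ref{assump2}. Writing $a:=p/r_0$ and $b:=p_0/r_0$, the boundedness of $\mathcal{M}^{((p_0/r_0)')}$ on $(L^{a}_w(\rn))'=L^{a'}_{\sigma}(\rn)$ with $\sigma:=w^{1-a'}$ is equivalent to $\sigma\in A_{a'/b'}(\rn)$, and a direct computation shows that this holds if and only if $w\in A_{a}(\rn)$ and $w$ lies in the reverse H\"older class $RH_{(p_0/p)'}(\rn)$ (note that $b/(b-a)=(p_0/p)'$, so $r_0$ drops out). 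Your observation that $q_{\sigma}<(p/r_0)'$ only yields this for $p_0$ \emph{sufficiently large}, with the threshold governed by the critical reverse H\"older exponent of $w$; the hypothesis $q>\max\{1,p/q_w,p\}$ alone does not guarantee that such a $p_0$ can be taken below $q$ (for instance, a weight whose reverse H\"older exponent is close to $1$ forces $p_0\gg p$). This quantitative issue is exactly what the citation to \cite{CWYZ2020} is meant to cover, so you should either import that claim verbatim, as the paper does, or add the corresponding restriction on $q$; as written, your ``choosing $p_0$ appropriately'' step is not justified by the stated hypotheses alone.
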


\begin{proof}
It is easy to show that $L^p_w(\rn)$ is a
ball quasi-Banach space with an absolutely continuous quasi-norm.
Moreover, from a claim in \cite[p.\,28]{CWYZ2020},
we deduce that both Assumptions \ref{assump1} and \ref{assump2}
with $X:=L^p_w(\rn)$, $p_-:=\frac{p}{q_w}$, $r_0\in(d,\frac{p}{q_w})$,
and $p_0\in(\max\{\frac{p}{q_w},p\},q)$ hold true. Thus, all the assumptions of
Theorems \ref{thm-B-CZ1}, \ref{thm-B-CZ}, and \ref{dual-T-wT} with $X:=L^p_w(\rn)$
are satisfied. Then, by Theorems \ref{thm-B-CZ1},
\ref{thm-B-CZ}, and \ref{dual-T-wT} with $X:=L^p_w(\rn)$,
we obtain the desired conclusions, which completes the proof of Theorem \ref{thm-wei}.
\end{proof}

\begin{remark}
\begin{enumerate}
\item[\rm (i)]	
To the best of our knowledge, Theorem \ref{thm-wei} is new.

\item[\rm (ii)]	Moreover, using \cite[Theorem 246]{sdh2020},
we find that, for any $\alpha\in(0,\infty)$ and $q\in[1,\infty]$,
$\mathcal{C}_{\alpha,q,\lfloor\alpha n\rfloor}(\rn)
=\dot{\mathcal{C}}^{\alpha n}(\rn)$ with equivalent norms,
where $\dot{\mathcal{C}}^{\alpha n}(\rn)$ is the homogeneous H\"older--Zygmund
space (see, for instance, \cite[Section 1.6.2]{sdh2020}).
Let $q\in(1,\infty)$, $s\in\zz_+$, $\alpha\in(0,\infty)$, $w\equiv1$, and $X:=L^{\frac{1}{\alpha+1}}_w(\rn)=L^{\frac{1}{\alpha+1}}(\rn)$.
From Remark \ref{comeback}, we infer that, under these assumptions, $\mathcal{L}_{X,q,s}(\rn)$
coincides with the Campanato space $\mathcal{C}_{\alpha,q,s}(\rn)$. Then, by Theorem
\ref{thm-wei}, we conclude that, under these assumptions,
the conclusions of Theorem \ref{thm-B-CZ1}
holds true with $X:=L^{\frac{1}{\alpha+1}}(\rn)$. This further
implies that, under these assumptions, Theorem \ref{thm-B-CZ1} with $X:=L^{\frac{1}{\alpha+1}}(\rn)$
coincides with \cite[Theorem 4.21]{kk2013}.
From these observations, we infer  the following
conclusion: For any $\alpha\in(0,\infty)$,
if letting $\delta\in(\alpha n-\lfloor\alpha n\rfloor,1]$ and $\widetilde{T}$
be the same as in Theorem \ref{thm-B-CZ1}, then $\widetilde{T}$ is bounded on
$\dot{\mathcal{C}}^{\alpha n}(\rn)$ if and only if, for any $\gamma\in\mathbb{Z}^n_+$
with $|\gamma|\leq \lfloor\alpha n\rfloor$, $T^*(x^{\gamma})=0$, which is the known best possible
result on the boundedness of Calder\'on--Zygmund operators on homogeneous H\"older--Zygmund
spaces.
\end{enumerate}
\end{remark}

\subsection{Variable Lebesgue spaces\label{variable}}

In this subsection, we apply
Theorems \ref{thm-B-CZ1}, \ref{thm-B-CZ}, and \ref{dual-T-wT}
to variable Lebesgue spaces.
We now recall the concept of variable Lebesgue spaces
(see, for instance, \cite[p.\,4]{CUF}). Moreover,
we refer the reader to \cite{CUF,dhr2009,ks1991} for more studies on
variable Lebesgue spaces.

\begin{definition}\label{def-pcdot}
Let $p(\cdot):\ \rn\to[0,\infty)$ be a measurable function.
The \emph{variable Lebesgue space $L^{p(\cdot)}(\rn)$} is defined to be
the set of all the measurable functions $f$ on $\rn$ such that
$$
\|f\|_{L^{p(\cdot)}(\rn)}:=\inf\lf\{\lambda\in(0,\infty)
:\ \int_\rn\lf[\frac{|f(x)|}{\lambda}\r]^{p(x)}\,dx\le1\r\}<\infty.
$$
Moreover, $p(\cdot)$ is said to be
\emph{globally log-H\"older continuous} if there
exists a $p_{\infty}\in\rr$ such that, for any $x,y\in\rn$,
$$|p(x)-p(y)|\lesssim\frac{1}{\log(e+1/|x-y|)}
\text{ and }
|p(x)-p_\infty|\lesssim\frac{1}{\log(e+|x|)},$$
where the implicit positive constants are independent of both $x$ and $y$.
\end{definition}

\begin{remark}
Let $p(\cdot):\ \rn\to[0,\infty)$ be a measurable function.
It is easy to show that $L^{p(\cdot)}(\rn)$ is a
ball quasi-Banach space but it may not be a quasi-Banach function space
(see, for instance, \cite[Section 7.1]{SHYY}).
\end{remark}

The following theorem is a corollary of
Theorems \ref{thm-B-CZ1}, \ref{thm-B-CZ}, and \ref{dual-T-wT}.

\begin{theorem}\label{thm-vec}
Let $p(\cdot):\ \rn\to(0,\infty)$ be a globally
log-H\"older continuous function satisfying
$0<\mathop{\mathrm{ess\,inf}}_{x\in\rn}\,p(x)
\le \mathop{\mathrm{ess\,sup}}_{x\in\rn}\,p(x)<\infty$.
Then both Theorems \ref{thm-B-CZ1} and \ref{thm-B-CZ}
with both $X:=L^{p(\cdot)}(\rn)$ and $p_-:=\mathop{\mathrm{ess\,inf}}_{x\in\rn}\,p(x)$ hold true;
moreover, Theorem \ref{dual-T-wT} with
$X:=L^{\vec p}(\rn)$, $p_-:=\mathop{\mathrm{ess\,inf}}_{x\in\rn}\,p(x)$,
$q\in(\max\{1,\mathop{\mathrm{ess\,sup}}_{x\in\rn}\,p(x)\},\fz)$,
$d\in(0,\min\{1,\mathop{\mathrm{ess\,inf}}_{x\in\rn}\,p(x)\})$,
and
$$s\in\lf(\frac{n}{d}-n-\dz,\fz\r)
\cap\lf[\lf\lfloor n\lf(\frac{1}{\min\{1,p_-\}}-1\r)\r\rfloor,\infty\r)\cap\zz_+$$
holds true.
\end{theorem}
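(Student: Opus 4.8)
The plan is to argue exactly as in the proof of Theorem \ref{thm-wei}: I would check that the pair $(X,p_-)$ with $X:=L^{p(\cdot)}(\rn)$ and $p_-:=\mathop{\mathrm{ess\,inf}}_{x\in\rn}\,p(x)$ satisfies all the structural hypotheses of Theorems \ref{thm-B-CZ1}, \ref{thm-B-CZ}, and \ref{dual-T-wT}, and then apply those three theorems directly. First I would recall the known facts that, under the present assumptions on $p(\cdot)$, the space $L^{p(\cdot)}(\rn)$ is a ball quasi-Banach function space with an absolutely continuous quasi-norm (see, for instance, \cite{SHYY, CUF}), so that the standing requirements of all three theorems on the underlying space are met.

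The core of the argument is the verification of Assumption \ref{assump1} with $p_-:=\mathop{\mathrm{ess\,inf}}_{x\in\rn}\,p(x)$. For fixed $p\in(0,p_-)$ and $u\in(1,\fz)$, one has $X^{1/p}=L^{p(\cdot)/p}(\rn)$; since $p(\cdot)/p$ remains globally log-H\"older continuous with $\mathop{\mathrm{ess\,inf}}_{x\in\rn}\,[p(x)/p]>1$, the vector-valued Fefferman--Stein inequality for the Hardy--Littlewood maximal operator on variable Lebesgue spaces (see, for instance, \cite{CUF}) provides exactly the inequality demanded by Assumption \ref{assump1}.

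Next I would verify Assumption \ref{assump2}. Given $d\in(0,\min\{1,p_-\})$, I would choose $r_0\in[d,\min\{1,p_-\})$ and $p_0\in(\max\{1,\mathop{\mathrm{ess\,sup}}_{x\in\rn}\,p(x)\},q)$; both choices are possible by the hypotheses on $d$ and $q$, and then $p_0>r_0$ holds automatically since $r_0<1<p_0$. Because $\mathop{\mathrm{ess\,inf}}_{x\in\rn}\,[p(x)/r_0]>1$, the convexification $X^{1/r_0}=L^{p(\cdot)/r_0}(\rn)$ is a ball Banach function space whose associate space is $L^{(p(\cdot)/r_0)'}(\rn)$ up to equivalent norms (see, for instance, \cite{CUF}); and since $p_0>\mathop{\mathrm{ess\,sup}}_{x\in\rn}\,p(x)$, the exponent $(p(\cdot)/r_0)'/(p_0/r_0)'$ is globally log-H\"older continuous with lower bound strictly above $1$, so $\mathcal{M}$ is bounded on the corresponding variable Lebesgue space, which is precisely the boundedness of $\mathcal{M}^{((p_0/r_0)')}$ on $(X^{1/r_0})'$ required by Assumption \ref{assump2}. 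With all structural assumptions in place, the conclusions follow at once from Theorems \ref{thm-B-CZ1}, \ref{thm-B-CZ}, and \ref{dual-T-wT} applied with $X:=L^{p(\cdot)}(\rn)$ and $p_-:=\mathop{\mathrm{ess\,inf}}_{x\in\rn}\,p(x)$.

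The step I expect to be the main obstacle is the exponent bookkeeping underlying Assumption \ref{assump2} and Definition \ref{finatom}: one must simultaneously arrange $d\le r_0<\min\{1,p_-\}$, $\max\{1,\mathop{\mathrm{ess\,sup}}_{x\in\rn}\,p(x)\}<p_0<q$, and the correct identification of the associate space of $L^{p(\cdot)/r_0}(\rn)$, and then check that the induced log-H\"older exponent lies above $1$ so that the standard maximal-function bounds on variable Lebesgue spaces apply. Once these parameters are pinned down, everything reduces to citing known results about the Hardy--Littlewood maximal operator on variable Lebesgue spaces and then quoting Theorems \ref{thm-B-CZ1}, \ref{thm-B-CZ}, and \ref{dual-T-wT}.
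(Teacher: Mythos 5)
Your proposal is correct and follows essentially the same route as the paper: verify that $L^{p(\cdot)}(\rn)$ is a ball quasi-Banach function space with absolutely continuous quasi-norm, check Assumptions \ref{assump1} and \ref{assump2} with $p_-:=\mathop{\mathrm{ess\,inf}}_{x\in\rn}p(x)$, $r_0\in(d,\min\{1,p_-\})$, and $p_0\in(\mathop{\mathrm{ess\,sup}}_{x\in\rn}p(x),q)$, and then quote Theorems \ref{thm-B-CZ1}, \ref{thm-B-CZ}, and \ref{dual-T-wT}. The only difference is that the paper cites a claim in \cite[p.\,36]{zhyy2022} for the two assumptions, whereas you unpack that verification via the vector-valued maximal inequality and the associate-space identification on variable Lebesgue spaces; your exponent bookkeeping there is correct.
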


\begin{proof}
It is easy to show that $L^{p(\cdot)}(\rn)$ is a
ball quasi-Banach space with an absolutely continuous quasi-norm.
Moreover, using a claim in \cite[p.\,36]{zhyy2022},
we find that both Assumptions \ref{assump1} and \ref{assump2}
with $X:=L^{p(\cdot)}(\rn)$, $p_-:=\mathop{\mathrm{ess\,inf}}_{x\in\rn}\,p(x)$,
$r_0\in(d,\mathop{\mathrm{ess\,inf}}_{x\in\rn}\,p(x))$,
and $p_0\in(\mathop{\mathrm{ess\,sup}}_{x\in\rn}\,p(x),q)$ hold true. Thus, all the assumptions of
Theorems \ref{thm-B-CZ1}, \ref{thm-B-CZ}, and \ref{dual-T-wT} with $X:=L^{p(\cdot)}(\rn)$
are satisfied. Then, from Theorems \ref{thm-B-CZ1}, \ref{thm-B-CZ}, and \ref{dual-T-wT}
with $X:=L^{p(\cdot)}(\rn)$, we deduce the desired conclusions,
which completes the proof of Theorem \ref{thm-vec}.
\end{proof}

\begin{remark}
To the best of our knowledge, Theorem \ref{thm-vec} is new.
\end{remark}

\subsection{Orlicz spaces\label{Orlicz}}

In this subsection, we apply all the main results of this article to Orlicz spaces.
Recall that there exist many operators that are not bounded on Lebesgue spaces
$L^p(\rn)$ with $p\in[1,\fz]$, especially on both $L^1(\rn)$ and $L^{\fz}(\rn)$.
The Orlicz space was introduced to cover the failure
of the boundedness of some integral operators.
We refer the reader to \cite{ans2021,Ho2022,iv1999,s1996} for more studies on this.

A non-decreasing function $\Phi:\ [0,\infty)\to[0,\infty)$
is called an \emph{Orlicz function} if it
satisfies $\Phi(0)= 0$, $\Phi(t)>0$ whenever $t\in(0,\infty)$,
and $\lim_{t\to\infty}\Phi(t)=\infty$.
An Orlicz function $\Phi$ is said to be
of \emph{lower} (resp., \emph{upper}) \emph{type} $p$ with $p\in\rr$ if
there exists a positive constant $C_{(p)}$, depending on $p$,
such that, for any $t\in[0,\infty)$
and $s\in(0,1)$ [resp., $s\in [1,\infty)$],
\begin{align*}
\Phi(st)\le C_{(p)}s^p \Phi(t).
\end{align*}
In addition, an Orlicz function $\Phi$ is said to be of
\emph{positive lower} (resp., \emph{upper}) \emph{type} if it is of lower
(resp., upper) type $p$ for some $p\in(0,\infty)$.

\begin{definition}\label{fine}
Let $\Phi$ be an Orlicz function with lower type
$p_{\Phi}^-\in\rr$ and upper type $p_{\Phi}^+\in\rr$.
The \emph{Orlicz space $L^\Phi(\rn)$} is defined
to be the set of all the measurable functions $f$ on $\rn$ such that
$$\|f\|_{L^\Phi(\rn)}:=\inf\lf\{\lambda\in(0,\infty):\ \int_{\rn}
\Phi\lf(\frac{|f(x)|}{\lambda}\r)\,dx\le1\r\}<\infty.$$
\end{definition}

\begin{remark}
Let $\Phi$ be an Orlicz function on $\rn$ with positive lower type
$p_{\Phi}^-\in(0,\fz)$ and positive upper type $p_{\Phi}^+\in(0,\fz)$.
It has been pointed out in \cite[Section 7.6]{SHYY}
that $L^{\Phi}(\rn)$ is a quasi-Banach
function space and hence a ball quasi-Banach function space.
\end{remark}

Using Theorems \ref{thm-B-CZ1}, \ref{thm-B-CZ}, and \ref{dual-T-wT},
we have the following conclusion.

\begin{theorem}\label{thm-phi}
Let $\Phi$ be an Orlicz function with positive lower type
$p_{\Phi}^-$ and positive upper type $p_{\Phi}^+$ satisfying
$0<p_{\Phi}^-\le p_{\Phi}^+<\infty$.
Then both Theorems \ref{thm-B-CZ1} and \ref{thm-B-CZ}
with both $X:=L^{\Phi}(\rn)$ and $p_-:=p_{\Phi}^-$ hold true; moreover,
Theorem \ref{dual-T-wT} with
$X:=L^{\Phi}(\rn)$, $p_-:=p_{\Phi}^-$,
$q\in(\max\{1,p_{\Phi}^+\},\fz)$, $d\in(0,\min\{1,p_{\Phi}^-\})$,
and
$$s\in\lf(\frac{n}{d}-n-\dz,\fz\r)
\cap\lf[\lf\lfloor n\lf(\frac{1}{\min\{1,p_-\}}-1\r)\r\rfloor,\infty\r)\cap\zz_+$$
holds true.
\end{theorem}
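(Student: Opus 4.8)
The plan is to follow verbatim the strategy already used in the proofs of Theorems \ref{thm-wei} and \ref{thm-vec}: verify that the Orlicz space $L^\Phi(\rn)$ fits the abstract framework of Section \ref{sec-CZO-B}, and then invoke Theorems \ref{thm-B-CZ1}, \ref{thm-B-CZ}, and \ref{dual-T-wT} directly with $X:=L^\Phi(\rn)$ and $p_-:=p_{\Phi}^-$.

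First I would recall, from \cite[Section 7.6]{SHYY}, that under the hypotheses $0<p_{\Phi}^-\le p_{\Phi}^+<\infty$ the Orlicz space $L^\Phi(\rn)$ is a quasi-Banach function space, and hence a ball quasi-Banach function space; moreover it has an absolutely continuous quasi-norm, which is the extra hypothesis required by Theorem \ref{dual-T-wT}.

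The substantive step is the verification of Assumptions \ref{assump1} and \ref{assump2}. For Assumption \ref{assump1}, one observes that, for any $p\in(0,p_{\Phi}^-)$, the $p$-convexification $(L^\Phi(\rn))^{1/p}$ is, up to equivalence of quasi-norms, again an Orlicz space whose associated lower type exceeds $1$; consequently the Fefferman--Stein vector-valued maximal inequality holds on it, and this is precisely what is recorded for Orlicz spaces in \cite[Section 7.6]{SHYY} and the references therein. For Assumption \ref{assump2}, one chooses $r_0\in(d,p_{\Phi}^-)$ so that $X^{1/r_0}$ is a ball Banach function space, together with $p_0\in(p_{\Phi}^+,q)$; the boundedness of the powered maximal operator $\mathcal{M}^{((p_0/r_0)')}$ on the associate space $(X^{1/r_0})'$ then follows from the same circle of ideas (cf.\ the analogous verifications in \cite{CWYZ2020,zhyy2022} for weighted and variable Lebesgue spaces). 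One must check that, with $q\in(\max\{1,p_{\Phi}^+\},\fz)$, $d\in(0,\min\{1,p_{\Phi}^-\})$, and $s$ in the stated range, all these parameter choices are mutually compatible and match the hypotheses of Definition \ref{finatom}.

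With these verifications in hand, the conclusion is immediate: Theorems \ref{thm-B-CZ1} and \ref{thm-B-CZ} yield the (sharp) boundedness of $\wz T$ on $\mathcal{L}_{L^\Phi(\rn),q,s}(\rn)$ and $\mathcal{L}_{L^\Phi(\rn),q,s,d}(\rn)$, and Theorem \ref{dual-T-wT} identifies $\wz T$ as the adjoint of $T$. I do not anticipate a genuine obstacle; the only point demanding care is the bookkeeping of the parameter constraints linking $d$, $r_0$, $p_0$, $q$, and the admissible range of $s$, which is routine.
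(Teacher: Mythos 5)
Your proposal is correct and follows essentially the same route as the paper: both verify that $L^{\Phi}(\rn)$ is a ball quasi-Banach function space with an absolutely continuous quasi-norm and that Assumptions \ref{assump1} and \ref{assump2} hold with $p_-:=p_{\Phi}^-$, $r_0\in(d,p_{\Phi}^-)$, and $p_0\in(p_{\Phi}^+,q)$, and then invoke Theorems \ref{thm-B-CZ1}, \ref{thm-B-CZ}, and \ref{dual-T-wT} directly. The paper simply outsources these verifications to the claim in \cite[p.\,38]{zhyy2022} and the proof of \cite[Lemma 4.5]{zyyw2019} rather than sketching the convexification argument, but the substance is identical.
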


\begin{proof}
Similarly to the proof of \cite[Lemma 4.5]{zyyw2019}, we find that $L^{\Phi}(\rn)$ is a
ball quasi-Banach space with an absolutely continuous quasi-norm.
Moreover, by the claim in \cite[p.\,38]{zhyy2022},
we conclude that both Assumptions \ref{assump1} and \ref{assump2}
with $X:=L^{\Phi}(\rn)$, $p_-:=p_{\Phi}^-$, $r_0\in(d,p_{\Phi}^-)$,
and $p_0\in(p_{\Phi}^+,q)$ hold true. Thus, all the assumptions of
Theorems \ref{thm-B-CZ1}, \ref{thm-B-CZ}, and \ref{dual-T-wT} with $X:=L^{\Phi}(\rn)$
are satisfied. Then, using Theorems \ref{thm-B-CZ1}, \ref{thm-B-CZ}, and \ref{dual-T-wT}
with $X:=L^{\Phi}(\rn)$, we obtain the desired conclusions,
which completes the proof of Theorem \ref{thm-phi}.
\end{proof}

\begin{remark}
To the best of our knowledge, Theorem \ref{thm-phi} is new.
\end{remark}

\subsection{Orlicz-Slice spaces\label{Orlicz-Slice}}

Recall that the Orlicz-slice space was introduced by
Zhang et al. \cite{zyyw2019}, which is a generalization of
the slice spaces proposed by Auscher and Mourgoglou \cite{am2019} and Auscher and
Prisuelos-Arribas \cite{ap2017}. We refer the reader to
\cite{zyw2022} for more studies on
the local Orlicz-slice space.
To be precise, we present the definition of the Orlicz-slice space as follows.

\begin{definition}\label{so}
Let $t,r\in(0,\infty)$ and $\Phi$ be an Orlicz function on $\rn$
with positive lower type $p_{\Phi}^-$ and
positive upper type $p_{\Phi}^+$. The \emph{Orlicz-slice space} $(E_\Phi^r)_t(\rn)$
is defined to be the set of all the measurable functions $f$ on $\rn$ such that
$$
\|f\|_{(E_\Phi^r)_t(\rn)}
:=\lf\{\int_{\rn}\lf[\frac{\|f\mathbf{1}_{B(x,t)}\|_{L^\Phi(\rn)}}
{\|\mathbf{1}_{B(x,t)}\|_{L^\Phi(\rn)}}\r]^r\,dx\r\}^{\frac{1}{r}}<\infty.
$$
\end{definition}

\begin{remark}
Let $t,r\in(0,\infty)$ and $\Phi$ be an Orlicz function on $\rn$
with $0<p_{\Phi}^-\le p_{\Phi}^+<\infty$.
By \cite[Lemma 2.28]{zyyw2019},
we find that $(E_\Phi^r)_t(\rn)$ is a
ball quasi-Banach space, but it may not be a quasi-Banach function space
(see, for instance, \cite[Remark 7.43(i)]{zyyw}).
\end{remark}

The following theorem is a corollary of
Theorems \ref{thm-B-CZ1}, \ref{thm-B-CZ}, and \ref{dual-T-wT}.

\begin{theorem}\label{thm-Et}
Let $t,r\in(0,\infty)$ and $\Phi$ be an Orlicz function
with positive lower type $p_{\Phi}^-$ and
positive upper type $p_{\Phi}^+$ satisfying
$0<p_{\Phi}^-\le p_{\Phi}^+<\infty$.
Then Theorems \ref{thm-B-CZ1} and \ref{thm-B-CZ}
with $X:=(E_\Phi^r)_t(\rn)$ and $p_-:=\min\{r,p_{\Phi}^-\}$ hold true; moreover,
Theorem \ref{dual-T-wT} with
$X:=(E_\Phi^r)_t(\rn)$, $p_-:=\min\{r,p_{\Phi}^-\}$,
$q\in(\max\{1,r,p_{\Phi}^+\},\fz)$, $d\in(0,\min\{1,r,p_{\Phi}^-\})$,
and
$$s\in\lf(\frac{n}{d}-n-\dz,\fz\r)
\cap\lf[\lf\lfloor n\lf(\frac{1}{\min\{1,p_-\}}-1\r)\r\rfloor,\infty\r)\cap\zz_+$$
holds true.
\end{theorem}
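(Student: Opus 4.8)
The plan is to argue exactly as in the proofs of Theorems \ref{thm-wei}, \ref{thm-vec}, and \ref{thm-phi}: one checks that $(E_\Phi^r)_t(\rn)$, equipped with the parameters listed in the statement, satisfies all the hypotheses of Theorems \ref{thm-B-CZ1}, \ref{thm-B-CZ}, and \ref{dual-T-wT}, and then one simply invokes these three theorems.

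First I would recall from \cite[Lemma 2.28]{zyyw2019} that $(E_\Phi^r)_t(\rn)$ is a ball quasi-Banach function space, and verify that it has an absolutely continuous quasi-norm; the latter follows by an argument similar to the one used for the Orlicz space $L^{\Phi}(\rn)$ (compare with the proof of \cite[Lemma 4.5]{zyyw2019} together with the results on the Orlicz-slice Hardy space established in \cite{zyyw2019}).

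Next --- and this is the only genuinely technical point --- I would verify Assumptions \ref{assump1} and \ref{assump2}. For Assumption \ref{assump1}, one uses the Fefferman--Stein vector-valued maximal inequality on $[(E_\Phi^r)_t(\rn)]^{1/p}$ valid for any $p\in(0,\min\{r,p_{\Phi}^-\})$, so that the choice $p_-:=\min\{r,p_{\Phi}^-\}$ is admissible; this is contained in the maximal-function estimates for Orlicz-slice spaces collected in \cite{zhyy2022} (see also \cite{zyyw2019}). For Assumption \ref{assump2}, one picks $r_0\in(d,\min\{1,r,p_{\Phi}^-\})$, so that $[(E_\Phi^r)_t(\rn)]^{1/r_0}$ is a ball Banach function space, together with $p_0\in(\max\{r,p_{\Phi}^+\},q)$, and then the boundedness of the powered maximal operator $\mathcal{M}^{((p_0/r_0)')}$ on the associate space $([(E_\Phi^r)_t(\rn)]^{1/r_0})'$ follows from the corresponding duality and maximal-function estimates for Orlicz-slice spaces.

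Finally, once all the hypotheses have been verified, I would conclude by applying Theorems \ref{thm-B-CZ1} and \ref{thm-B-CZ} with $X:=(E_\Phi^r)_t(\rn)$ and $p_-:=\min\{r,p_{\Phi}^-\}$, and Theorem \ref{dual-T-wT} with the stated ranges of $q$, $d$, and $s$. I expect the only real obstacle to be the parameter bookkeeping: one must confirm that the ranges of $d$, $r_0$, $p_0$, $q$, and $s$ are mutually compatible and nonempty --- in particular, that $d<\min\{1,r,p_{\Phi}^-\}$ leaves room for $r_0\in(d,\min\{1,r,p_{\Phi}^-\})$ and that $q>\max\{1,r,p_{\Phi}^+\}$ leaves room for $p_0\in(\max\{r,p_{\Phi}^+\},q)$ --- after which the desired conclusions follow at once from the already-established general theorems.
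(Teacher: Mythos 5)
Your proposal is correct and follows essentially the same route as the paper: verify that $(E_\Phi^r)_t(\rn)$ is a ball quasi-Banach function space with an absolutely continuous quasi-norm, check Assumptions \ref{assump1} and \ref{assump2} with $p_-:=\min\{r,p_{\Phi}^-\}$, $r_0\in(d,\min\{r,p_{\Phi}^-\})$, and $p_0\in(\max\{r,p_{\Phi}^+\},q)$ via the known maximal-function estimates for Orlicz-slice spaces, and then invoke Theorems \ref{thm-B-CZ1}, \ref{thm-B-CZ}, and \ref{dual-T-wT}. The paper's proof is exactly this, citing a claim in the reference \cite{zhyy2022} for the two assumptions.
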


\begin{proof}
Similarly to the proof of \cite[Lemma 4.5]{zyyw2019}, we find that
$L^{\Phi}(\rn)$ is a ball quasi-Banach space with an absolutely continuous quasi-norm.
Moreover, by a claim in \cite[p.\,39]{zhyy2022},
we conclude that both Assumptions \ref{assump1} and \ref{assump2}
with $X:=(E_\Phi^r)_t(\rn)$,
$p_-:=\min\{r,p_{\Phi}^-\}$, $r_0\in(d,\min\{r,p_{\Phi}^-\})$,
and $p_0\in(\max\{r,p_{\Phi}^+\},q)$ hold true. Thus, all the assumptions of
Theorems \ref{thm-B-CZ1}, \ref{thm-B-CZ}, and \ref{dual-T-wT} with $X:=(E_\Phi^r)_t(\rn)$
are satisfied. Then, using Theorems \ref{thm-B-CZ1}, \ref{thm-B-CZ},
and \ref{dual-T-wT} with $X:=(E_\Phi^r)_t(\rn)$,
we obtain the desired conclusions, which completes the proof of Theorem \ref{thm-Et}.
\end{proof}

\begin{remark}
To the best of our knowledge, Theorem \ref{thm-Et} is new.
\end{remark}

\subsection{Morrey spaces\label{Morrey}}

Recall that Morrey \cite{MCB1938} introduced the Morrey space
$M_{r}^{p}(\mathbb{R}^{n})$ with $0<r \leq p<\infty$
to study the regularity
of solutions to certain equations.
The Morrey space has many applications in the theory of elliptic partial
differential equations, potential theory, and harmonic analysis
(see, for instance, \cite{cf1987,sdh2020,tyy-2019}).

\begin{definition}\label{Def-Morrey}
Let $0<r \leq p<\infty$. The \emph{Morrey space}
$M_{r}^{p}(\mathbb{R}^{n})$ is
defined to be the set of all the measurable
functions $f$ on $\rn$ such that
$$
\|f\|_{M_{r}^{p}(\mathbb{R}^{n})}:=\sup
_{B \in \mathbb{B}(\rn)}|B|^{\frac{1}{p}-\frac{1}{r}}\|f\|_{L^{r}(B)}<\infty.
$$
\end{definition}

\begin{remark}\label{Rem-Morrey}
Let $0<r \leq p<\infty$. By Definition \ref{Def-Morrey}, we easily find that
$M_{r}^{p}(\rn)$ is a ball quasi-Banach function space.
However, it has been pointed out in \cite[p.\,87]{SHYY} that $M_{r}^{p}(\rn)$ may not be a
quasi-Banach function space. Moreover, $M_r^p(\rn)$ does not have an absolutely continuous quasi-norm unless $r=p$, namely, $M^{p}_{r}(\rn)=L^r(\rn)$.
\end{remark}

The following theorem is a corollary of both
Theorems \ref{thm-B-CZ1} and \ref{thm-B-CZ}.

\begin{theorem}\label{ap-M}
Let $0<r \leq p<\frac{n}{\alpha}$. Then
Theorems \ref{thm-B-CZ1} and \ref{thm-B-CZ}
with both $X:=M_{r}^{p}(\rn)$ and $p_-:=r$ hold true.
\end{theorem}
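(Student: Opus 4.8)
The plan is to follow verbatim the pattern used for Theorems \ref{thm-wei}, \ref{thm-vec}, \ref{thm-phi}, and \ref{thm-Et}: one checks that $X:=M_r^p(\rn)$ together with $p_-:=r$ satisfies every structural hypothesis of Theorems \ref{thm-B-CZ1} and \ref{thm-B-CZ}, and then invokes those two theorems directly.

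The starting point is Remark \ref{Rem-Morrey}, which already records that $M_r^p(\rn)$ is a ball quasi-Banach function space; note also that $\|\mathbf{1}_B\|_{M_r^p(\rn)}=|B|^{1/p}$ for every ball $B\in\mathbb{B}(\rn)$, so the weight $|B|/\|\mathbf{1}_B\|_{M_r^p(\rn)}=|B|^{1-1/p}$ entering $\mathcal{L}_{X,q,s}(\rn)$ and $\mathcal{L}_{X,q,s,d}(\rn)$ is explicit. In contrast to the four preceding examples, Theorem \ref{dual-T-wT} is \emph{not} asserted here: by Remark \ref{Rem-Morrey}, $M_r^p(\rn)$ fails to have an absolutely continuous quasi-norm once $r<p$, and this absolute continuity is indispensable for the dual theorem (Lemma \ref{2t1}) and for the extension result (Proposition \ref{lem-T-H}), hence for Theorem \ref{dual-T-wT}; when $r=p$ one has $M_r^p(\rn)=L^r(\rn)$, for which the statement is already subsumed by the unweighted case of Theorem \ref{thm-wei}.

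The only genuine point is to verify Assumption \ref{assump1} with $p_-:=r$, namely that for every $\theta\in(0,r)$ and every $u\in(1,\infty)$ the Fefferman--Stein vector-valued maximal inequality holds on the convexification $(M_r^p(\rn))^{1/\theta}$. A direct computation shows that a convexification of a Morrey space is again a Morrey space, $(M_r^p(\rn))^{1/\theta}=M_{r/\theta}^{p/\theta}(\rn)$, whose lower index $r/\theta$ exceeds $1$ because $\theta<r$. On a Morrey space with lower index larger than $1$ the Hardy--Littlewood maximal operator is bounded, and the associated $\ell^u$-valued Fefferman--Stein inequality also holds; see, for instance, \cite{SHYY} and the references therein. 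This yields Assumption \ref{assump1} for $X:=M_r^p(\rn)$ with $p_-:=r$, and hence all hypotheses of Theorems \ref{thm-B-CZ1} and \ref{thm-B-CZ} with $X:=M_r^p(\rn)$ and $p_-:=r$ are satisfied (the conditions on $s$, $q$, $d$, $\delta$ there merely specialize with $p_-=r$). Applying those two theorems then gives the boundedness of $\widetilde T$ on $\mathcal{L}_{M_r^p(\rn),q,s}(\rn)$ and on $\mathcal{L}_{M_r^p(\rn),q,s,d}(\rn)$, together with its equivalence with the vanishing moments of $T$ up to order $s$.

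The only step that is not routine book-keeping is the verification of the vector-valued maximal inequality on the convexified Morrey space in the third paragraph; for Morrey spaces the Fefferman--Stein inequality is delicate outside the diagonal range, which is precisely why the argument must be run on a convexification whose lower index strictly exceeds $1$. Once that inequality is in hand, the remainder of the proof is identical in form to the four preceding subsections.
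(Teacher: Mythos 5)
Your proposal is correct and follows essentially the same route as the paper: the paper likewise notes that $M_r^p(\rn)$ is a ball quasi-Banach function space and verifies Assumption \ref{assump1} with $p_-:=r$ by citing the vector-valued Fefferman--Stein maximal inequality on Morrey spaces (via \cite[Lemma 2.5]{tx2005}), which is exactly the fact you derive through the convexification identity $(M_r^p(\rn))^{1/\theta}=M_{r/\theta}^{p/\theta}(\rn)$. Your additional remarks on why Theorem \ref{dual-T-wT} is omitted match Remark \ref{rem-Morrey}(i) in the paper.
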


\begin{proof}
By Definition \ref{Def-Morrey}, we easily find that
$M_{r}^{p}(\rn)$ is a ball quasi-Banach function space.
In addition, by \cite[Lemma 2.5]{tx2005} (see also \cite[Lemma 7.2]{zyyw}),
we conclude that
$M_r^p(\rn)$ satisfies Assumption
\ref{assump1} with both $X:=M_r^p(\rn)$ and $p_-:=r$.
Thus, all the assumptions of
Theorems \ref{thm-B-CZ1} and \ref{thm-B-CZ} with $X:=M_r^p(\rn)$ are satisfied.
Then, using Theorems \ref{thm-B-CZ1} and \ref{thm-B-CZ}
with $X:=M_r^p(\rn)$, we obtain the desired conclusions,
which completes the proof of
Theorem \ref{ap-M}.
\end{proof}

\begin{remark}\label{rem-Morrey}
\begin{enumerate}
\item[\rm (i)]	
Let $0<r\leq p<\infty$.
As Remark \ref{Rem-Morrey} point out that the Morrey space $M^{p}_{r}(\rn)$ does not have an absolutely continuous quasi-norm unless $r=p$,
Theorem \ref{dual-T-wT} can not be applied
to  $M^{p}_{r}(\rn)$.

\item[\rm (ii)]	
To the best of our knowledge, Theorem \ref{ap-M} is new.

\item[\rm (iii)]
It is worth pointing out that there exist many studies about
important operators on Morrey-type
space; we refer the reader to
\cite{DGNSS,HSS2016,ho2017,SS2017,SST2009,Ho2021}.
\end{enumerate}
\end{remark}

\subsection{Mixed-norm Lebesgue spaces\label{Mixed-Norm}}

The mixed-norm Lebesgue space
$L^{\vec{p}}(\mathbb{R}^{n})$
was studied by Benedek and Panzone
\cite{BAP1961} in 1961, which can be
traced back to H\"ormander \cite{HL1960}.
We refer the reader to
\cite{CGN2017,CGG2017,CGN20172,GN2016,HLY2019,HLYY2019,HYacc}
for more studies on mixed-norm type spaces.

\begin{definition}\label{mixed}
Let $\vec{p}:=(p_{1}, \ldots, p_{n})
\in(0, \infty]^{n}$. The \emph{mixed-norm Lebesgue space
$L^{\vec{p}}(\mathbb{R}^{n})$}
is defined to be the set of all the measurable
functions $f$ on $\rn$ such that
\begin{align*}
\|f\|_{L^{\vec{p}}(\mathbb{R}^{n})}:=\left\{\int_{\mathbb{R}}
\cdots\left[\int_{\mathbb{R}}\left|f(x_{1}, \ldots,
x_{n})\right|^{p_{1}} \,d x_{1}\right]^{\frac{p_{2}}{p_{1}}}
\cdots \,dx_{n}\right\}^{\frac{1}{p_{n}}}<\infty
\end{align*}
with the usual modifications made when $p_i=\infty$ for some $i\in\{1,\ldots,n\}$.
Moreover, let
\begin{align}\label{p-p+}
p_{-}:=\min\{p_{1},\ldots, p_{n}\}\text{ and }p_{+}:=\max\{p_{1},\ldots, p_{n}\}.
\end{align}
\end{definition}

\begin{remark}\label{mix-r}
Let $\vec{p}\in(0, \infty)^{n}$.	
From Definition \ref{mixed}, we easily deduce that
$L^{\vec{p}}(\mathbb{R}^{n})$
is a ball quasi-Banach space.
However, as was pointed out in \cite[Remark 7.21]{zyyw}, $L^{\vec{p}}(\rn)$
may not be a quasi-Banach function space.
\end{remark}

The following theorem is a corollary of
Theorems \ref{thm-B-CZ1}, \ref{thm-B-CZ}, and \ref{dual-T-wT}.

\begin{theorem}\label{apply3}
Let $\vec{p}:=(p_{1}, \ldots, p_{n})
\in(0, \infty)^{n}$. Then Theorems \ref{thm-B-CZ1}, \ref{thm-B-CZ},
and \ref{dual-T-wT} with both
$X:=L^{\vec{p}}(\mathbb{R}^{n})$ and $p_{-}:=\min\{p_{1},\ldots, p_{n}\}$ hold true.
\end{theorem}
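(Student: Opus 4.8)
The plan is to derive Theorem \ref{apply3} as a direct consequence of the abstract results in Sections \ref{sec-CZO-B-01} and \ref{sec-CZO-B-02}, exactly as in the proofs of Theorems \ref{thm-wei}, \ref{thm-vec}, \ref{thm-phi}, and \ref{thm-Et}. The only work is to verify that $X:=L^{\vec p}(\rn)$ with $p_-:=\min\{p_1,\dots,p_n\}$ fits the hypotheses of those theorems, namely: (a) $L^{\vec p}(\rn)$ is a ball quasi-Banach function space with an absolutely continuous quasi-norm; (b) Assumption \ref{assump1} holds with this choice of $p_-$; and (c) Assumption \ref{assump2} holds for suitable $r_0$ and $p_0$ (needed for Theorem \ref{dual-T-wT}).

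First I would recall from Remark \ref{mix-r} (and \cite[Remark 7.21]{zyyw}) that $L^{\vec p}(\rn)$ is a ball quasi-Banach function space; the absolute continuity of its quasi-norm follows since $\vec p\in(0,\infty)^n$ has all finite components, so dominated convergence applies iteratively in each variable (this is recorded, e.g., in the mixed-norm literature cited, such as \cite{HLYY2019}). Next, for Assumption \ref{assump1} I would invoke the known Fefferman--Stein vector-valued maximal inequality on mixed-norm Lebesgue spaces: for any $p\in(0,p_-)$ and $u\in(1,\infty)$, the maximal operator $\mathcal M$ satisfies the required $\ell^u$-valued bound on $(L^{\vec p}(\rn))^{1/p}=L^{\vec p/p}(\rn)$, since $\vec p/p\in(1,\infty]^n$; this is exactly the statement used for $L^{\vec p}(\rn)$ in \cite[Section 7]{zyyw} or \cite[Lemma 2.7]{zyyw2019}-type results. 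For Assumption \ref{assump2}, I would choose $r_0\in(0,\min\{1,p_-\})$ with $r_0<d$ small enough that $(L^{\vec p}(\rn))^{1/r_0}=L^{\vec p/r_0}(\rn)$ is a ball Banach function space (all components of $\vec p/r_0$ exceed $1$), and then pick $p_0\in(\max\{1,p_+\}, q)$; the boundedness of $\mathcal M^{((p_0/r_0)')}$ on the associate space $(L^{\vec p/r_0}(\rn))'$ reduces, via the explicit description of the associate space of a mixed-norm Lebesgue space as another mixed-norm Lebesgue space (see \cite{BAP1961}), to the ordinary mixed-norm maximal bound again.

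With (a)--(c) in hand, all hypotheses of Theorems \ref{thm-B-CZ1}, \ref{thm-B-CZ}, and \ref{dual-T-wT} are satisfied with $X:=L^{\vec p}(\rn)$ and $p_-:=\min\{p_1,\dots,p_n\}$, and applying those theorems directly yields the claimed boundedness of $\widetilde T$ on $\mathcal{L}_{L^{\vec p}(\rn),q,s}(\rn)$ and on $\mathcal{L}_{L^{\vec p}(\rn),q,s,d}(\rn)$, together with the adjoint identity, under the stated numerology on $q$, $d$, and $s$. The proof will therefore be two or three sentences long once the correct references are cited.

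The main obstacle, such as it is, is purely bookkeeping: locating in the literature (or in \cite{zhyy2022,zyyw}) the precise statement of the vector-valued maximal inequality on mixed-norm Lebesgue spaces with the optimal constant $p_-=\min\{p_1,\dots,p_n\}$, and confirming that the associate space of $L^{\vec p/r_0}(\rn)$ is $L^{(\vec p/r_0)'}(\rn)$ so that Assumption \ref{assump2} genuinely reduces to a maximal bound on a mixed-norm space rather than something more delicate. There is no analytic difficulty beyond invoking these known facts; the Calder\'on--Zygmund estimates and the duality with $H_{L^{\vec p}(\rn)}(\rn)$ are entirely absorbed into the abstract theorems.
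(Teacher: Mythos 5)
Your proposal is correct and follows essentially the same route as the paper: verify that $L^{\vec p}(\rn)$ is a ball quasi-Banach function space with absolutely continuous quasi-norm, obtain Assumption \ref{assump1} from the vector-valued Fefferman--Stein maximal inequality on mixed-norm Lebesgue spaces (the paper cites \cite[Lemma 3.7]{HLY2019}), and obtain Assumption \ref{assump2} from the Benedek--Panzone description of the associate space together with the powered maximal bound (the paper cites \cite[p.\,304, Theorem 1.a]{BAP1961} and \cite[Lemma 3.5]{HLY2019}), before invoking Theorems \ref{thm-B-CZ1}, \ref{thm-B-CZ}, and \ref{dual-T-wT}. The references you anticipate are exactly the ones the paper uses, so no further comment is needed.
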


\begin{proof}
Let all the symbols be the same as in the
present theorem, and let $p_+$ be the same as in \eqref{p-p+}.
By Definition \ref{mixed}, we easily find that
$L^{\vec{p}}(\mathbb{R}^{n})$
is a ball quasi-Banach space with an absolutely continuous quasi-norm.
In addition, from \cite[Lemma 3.7]{HLY2019}, we deduce that Assumption
\ref{assump1} with both $X:=L^{\vec{p}}(\rn)$ and
$p_-$ in \eqref{p-p+} holds true.
In addition, using both the dual theorem of $L^q(\rn)$
(see \cite[p.\,304, Theorem 1.a]{BAP1961}) and \cite[Lemma 3.5]{HLY2019}
(see also \cite[Lemma 4.3]{HYacc}), we conclude
that Assumption \ref{assump2} with
any $r_0\in(0,p_-)$ and $p_0\in(p_+,\fz)$ also holds true.
Thus, all the assumptions of Theorems \ref{thm-B-CZ1}, \ref{thm-B-CZ}, and \ref{dual-T-wT}
with $X:=L^{\vec{p}}(\rn)$ are satisfied.
Then, using Theorems \ref{thm-B-CZ1}, \ref{thm-B-CZ}, and \ref{dual-T-wT} with $X:=L^{\vec{p}}(\rn)$,
we obtain the desired conclusions,
which completes the proof
of Theorem \ref{apply3}.
\end{proof}

\begin{remark}
To the best of our knowledge, Theorem \ref{apply3} is new.
\end{remark}

\subsection{Local generalized Herz spaces\label{H-H}}

The local generalized Herz space was originally introduced by Rafeiro and Samko \cite{RS2020},
which is the generalization of the classical
homogeneous Herz space and connects with the generalized Morrey type space.
Very recently, Li et al. \cite{LYH2022} developed a complete
real-variable theory of Hardy spaces
associated with the local generalized Herz space.
We now present the concepts of both the function class
$M\left(\mathbb{R}_{+}\right)$ and the
local generalized Herz space
$\dot{\mathcal{K}}_{\omega, \mathbf{0}}^{p,
q}(\mathbb{R}^{n})$
(see \cite[Definitions 2.1 and 2.2]{RS2020}
and also \cite[Definitions 1.1.1 and 1.2.1]{LYH2022}).

\begin{definition}
Let $\mathbb{R}_+:=(0,\infty)$. The \emph{function class}
$M\left(\mathbb{R}_{+}\right)$ is defined to
be the set of all the positive functions
$\omega$ on $\mathbb{R}_{+}$ such that, for
any $0<\delta<N<\infty$,
$$
0<\inf _{t \in(\delta, N)} \omega(t) \leq
\sup _{t \in(\delta, N)} \omega(t)<\infty
$$
and there exist four constants $\alpha_{0}$,
$\beta_{0}$, $\alpha_{\infty}$,
$\beta_{\infty} \in \mathbb{R}$ such that
\begin{itemize}
\item[\rm (i)] for any $t \in(0,1]$, $\omega(t)
t^{-\alpha_{0}}$ is almost increasing and
$\omega(t) t^{-\beta_{0}}$ is almost
decreasing;

\item[\rm (ii)] for any $t \in[1, \infty)$, $\omega(t)
t^{-\alpha_{\infty}}$ is almost increasing
and $\omega(t) t^{-\beta_{\infty}}$ is
almost decreasing.
\end{itemize}
\end{definition}

\begin{definition}\label{def-Lw}
Let $p,q \in(0, \infty)$ and $\omega \in
M\left(\mathbb{R}_{+}\right)$.
The \emph{local generalized Herz space}
$\dot{\mathcal{K}}_{\omega, \mathbf{0}}^{p,
q}(\mathbb{R}^{n})$ is defined to
be the set of all the measurable functions
$f$ on $\mathbb{R}^{n}$ such that
$$
\|f\|_{\dot{\mathcal{K}}_{\omega, \mathbf{0}}^{p,
q}\left(\mathbb{R}^{n}\right)}:=\left\{\sum_{k \in
\mathbb{Z}}\left[\omega\left(2^{k}\right)\right]^{q}\left\|f
\mathbf{1}_{B\left(\mathbf{0}, 2^{k}\right) \setminus
B\left(\mathbf{0},
2^{k-1}\right)}\right\|_{L^{p}\left(\mathbb{R}^{n}\right)}
^{q}\right\}^{\frac{1}{q}}
$$
is finite.
\end{definition}

\begin{remark}
Let all the symbols be the same as in Definition \ref{def-Lw}.
It has been proved in \cite[Theorem 1.2.42]{LYH2022} that
$\dot{\mathcal{K}}_{\omega,\mathbf{0}}^{p,
r}(\mathbb{R}^{n})$ is a ball quasi-Banach space.	
However, as was pointed out in \cite[Remark 4.15]{cjy-01},
$\dot{\mathcal{K}}_{\omega, \mathbf{0}}^{p,q}(\mathbb{R}^{n})$
may not be a quasi-Banach function space.
\end{remark}

\begin{definition}
Let $\omega$ be a positive
function on $\mathbb{R}_{+}$. Then the
\emph{Matuszewska-Orlicz indices} $m_{0}(\omega)$,
$M_{0}(\omega)$, $m_{\infty}(\omega)$, and
$M_{\infty}(\omega)$ of $\omega$ are
defined, respectively, by setting, for any
$h \in(0, \infty)$,
$$m_{0}(\omega):=\sup _{t \in(0,1)}
\frac{\ln (\varlimsup\limits_{h \to
0^{+}} \frac{\omega(h
t)}{\omega(h)})}{\ln t},\
M_{0}(\omega):=\inf _{t \in(0,1)}
\frac{\ln (
\varliminf\limits_{h\to0^{+}}
\frac{\omega(h t)}{\omega(h)})}{\ln
t},$$
$$m_{\infty}(\omega):=\sup _{t \in(1,
\infty)} \frac{\ln (\varliminf
\limits_{h \to\infty}
\frac{\omega(h t)}{\omega(h)})}{\ln
t},$$
and
$$M_{\infty}(\omega):=\inf _{t \in(1, \infty)}
\frac{\ln (\varlimsup\limits_{h
\to \infty} \frac{\omega(h
t)}{\omega(h)})}{\ln t}.
$$
\end{definition}

The following theorem is a corollary of
Theorems \ref{thm-B-CZ1}, \ref{thm-B-CZ}, and \ref{dual-T-wT}.

\begin{theorem}\label{apply5}
Let $p,r\in(0, \infty)$ and $\omega \in
M\left(\mathbb{R}_{+}\right)$. Then Theorems \ref{thm-B-CZ1},
\ref{thm-B-CZ}, and \ref{dual-T-wT}
with both $X:=\dot{\mathcal{K}}_{\omega,
\mathbf{0}}^{p,r}(\mathbb{R}^{n})$ and
$$
p_-:=\min\lf\{p, \frac{n}{\max\lf\{M_{0}(\omega),M_{\infty}(\omega)\r\}+n/p}\r\}
$$
hold true.
\end{theorem}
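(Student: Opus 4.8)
The plan is to follow the same template as in the proofs of Theorems \ref{thm-wei}, \ref{thm-vec}, \ref{thm-phi}, \ref{thm-Et}, and \ref{apply3}: one only needs to check that $X:=\dot{\mathcal{K}}_{\omega,\mathbf{0}}^{p,r}(\rn)$ is a ball quasi-Banach function space with an absolutely continuous quasi-norm and that both Assumption \ref{assump1} and Assumption \ref{assump2} hold with the claimed value of $p_-$, after which Theorems \ref{thm-B-CZ1}, \ref{thm-B-CZ}, and \ref{dual-T-wT} apply verbatim with $X:=\dot{\mathcal{K}}_{\omega,\mathbf{0}}^{p,r}(\rn)$.

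First I would recall from \cite[Theorem 1.2.42]{LYH2022} that $\dot{\mathcal{K}}_{\omega,\mathbf{0}}^{p,r}(\rn)$ is indeed a ball quasi-Banach function space, and that its quasi-norm is absolutely continuous (this is established in \cite{LYH2022} and used in \cite{cjy-01,cjy-02}); the latter is precisely what makes the dual-type Theorem \ref{dual-T-wT} available here, in contrast with the Morrey situation of Remark \ref{rem-Morrey}(i). The substantive part is verifying the two maximal-function hypotheses. For Assumption \ref{assump1}, I would invoke the Fefferman--Stein vector-valued maximal inequality on the convexifications of local generalized Herz spaces: for small enough powers and for any $u\in(1,\infty)$ the Hardy--Littlewood maximal operator is bounded on the corresponding $u$-vector-valued convexification $X^{1/p'}$, and the sharp threshold for this is governed by the exponent $p$ together with the upper Matuszewska--Orlicz indices, which is exactly how $p_-=\min\{p,\,n/(\max\{M_0(\omega),M_\infty(\omega)\}+n/p)\}$ arises. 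For Assumption \ref{assump2}, I would pick $r_0\in(d,p_-)$ so that $X^{1/r_0}$ is a ball Banach function space, identify the associate space $(X^{1/r_0})'$ (a block-type space built from $\omega$ and $p$) via the duality theory for local generalized Herz spaces in \cite{LYH2022}, and then check boundedness of the powered maximal operator $\mathcal{M}^{((p_0/r_0)')}$ on $(X^{1/r_0})'$ for a suitable $p_0\in(r_0,\infty)$, again using the maximal inequalities of \cite{LYH2022} (as in \cite{cjy-01,cjy-02}).

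The main obstacle will be the index bookkeeping rather than any new analytic input: one must confirm that the constraints on the Matuszewska--Orlicz indices needed for the above maximal inequalities are exactly what is encoded by the displayed $p_-$, and that the admissible ranges of $r_0$ and $p_0$ so produced are nonempty and simultaneously compatible with the standing hypotheses of Theorems \ref{thm-B-CZ1}, \ref{thm-B-CZ}, and \ref{dual-T-wT}, i.e.\ with the conditions $q\in(1,\infty)$, $d\in(0,\infty)$, and the lower bound on $s$ in terms of $n$, $p_-$ (or $\min\{p_-,d\}$), and $\dz$. Once Assumptions \ref{assump1} and \ref{assump2} are in place with these parameters, the conclusion is immediate by specializing Theorems \ref{thm-B-CZ1}, \ref{thm-B-CZ}, and \ref{dual-T-wT} to $X:=\dot{\mathcal{K}}_{\omega,\mathbf{0}}^{p,r}(\rn)$.
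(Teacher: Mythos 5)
Your proposal follows essentially the same route as the paper: the paper likewise cites \cite[Theorems 1.2.42 and 1.4.1]{LYH2022} for the ball quasi-Banach structure and the absolutely continuous quasi-norm, \cite[Lemma 4.3.10]{LYH2022} for Assumption \ref{assump1}, and \cite[Lemma 1.8.6]{LYH2022} for Assumption \ref{assump2} with $r_0\in(0,\min\{1,p_-,r\})$ and $p_0\in(\max\{p,\frac{n}{\min\{m_0(\omega),m_\infty(\omega)\}+n/p}\},\infty]$, before specializing Theorems \ref{thm-B-CZ1}, \ref{thm-B-CZ}, and \ref{dual-T-wT}. Your outline is correct, modulo the minor notational slip $X^{1/p'}$ (which should be the $1/p$-convexification $X^{1/p}$ of Assumption \ref{assump1}).
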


\begin{proof}
Let
$$
r_0 \in\lf(0, \min \lf\{1,p_-, r\r\}\r)
$$
and
$$
p_0 \in\lf(\max\lf\{p, \frac{n}{\min \{m_{0}(\omega),m_{\infty}(\omega)\}+n / p}\r\}, \infty\r].
$$
From \cite[Theorems 1.2.42 and 1.4.1]{LYH2022}, we deduce that
$\dot{\mathcal{K}}_{\omega,\mathbf{0}}^{p,
r}(\mathbb{R}^{n})$ is a ball quasi-Banach space with an absolutely continuous quasi-norm.
To prove the required conclusions,
it suffices to show that
$\dot{\mathcal{K}}_{\omega,\mathbf{0}}^{p,r}(\rn)$ satisfies
all the assumptions of Theorems \ref{thm-B-CZ1}, \ref{thm-B-CZ}, and \ref{dual-T-wT}
with $X:=\dot{\mathcal{K}}_{\omega,\mathbf{0}}^{p,r}(\rn)$.
Indeed, by \cite[Lemma 4.3.10]{LYH2022}, we conclude that Assumption \ref{assump1} with
$X:=\dot{\mathcal{K}}_{\omega,\mathbf{0}}^{p,r}(\rn)$ holds true.
Moreover, from \cite[Lemma 1.8.6]{LYH2022}, we deduce that Assumption \ref{assump2} with $X:=\dot{\mathcal{K}}_{\omega,\mathbf{0}}^{p,r}(\mathbb{R}^{n})$ also holds true.
Thus, all the assumptions of Theorems \ref{thm-B-CZ1}, \ref{thm-B-CZ}, and \ref{dual-T-wT}
with $X:=\dot{\mathcal{K}}_{\omega,\mathbf{0}}^{p,r}(\rn)$ are satisfied.
Then, using Theorems \ref{thm-B-CZ1}, \ref{thm-B-CZ}, and \ref{dual-T-wT}
with $X:=\dot{\mathcal{K}}_{\omega,\mathbf{0}}^{p,r}(\rn)$,
we obtain the desired conclusions,
which completes the proof
of Theorem \ref{apply5}.
\end{proof}

\begin{remark}
To the best of our knowledge, Theorem
\ref{apply5} is new.
\end{remark}

\subsection{Mixed Herz spaces\label{M-H}}

We now present the following definition of mixed Herz spaces, which is just
\cite[Definition 2.3]{zyz2022}.

\begin{definition}\label{mhz}
Let $\vec{p}:=(p_{1},\ldots,p_{n}),\vec{q}:=(q_{1},\ldots,q_{n})
\in(0,\infty]^{n}$, $\vec{\alpha}:=
(\alpha_{1},\ldots,
\alpha_{n})\in\rn$, and $R_{k_i}:=(-2^{k_i},2^{k_i})\setminus
(-2^{k_i-1},2^{k_i-1})$ for any
$k_i\in\zz$ and $i\in\{1,\ldots,n\}$.
The \emph{mixed Herz space}
$\dot{E}^{\vec{\alpha},\vec{p}}_{\vec{q}}(\rn)$ is
defined to be the
set of all the functions
$f\in \mathscr{M}(\rn)$ such
that
\begin{align*}
\|f\|_{\dot{E}^{\vec{\alpha},\vec{p}}_{\vec{q}}
(\rn)}:&=\lf\{\sum_{k_{n} \in
\zz}2^{k_{n}
p_{n}\alpha_{n}}
\lf[\int_{R_{k_{n}}}\cdots\lf\{\sum_{k_{1}
\in \zz}
2^{k_{1}p_{1}\alpha_{1}}\r.\r.\r.\\
&\lf.\lf.\lf.\quad\times\lf[\int_{R_{k_{1}}}|f(x_{1},
\ldots,x_{n})|^{q_{1}}\,dx_{1} \r]^{\f{p_{1}}{q_{1}}}
\r\}^{\f{q_{2}}{p_{1}}}\cdots\,dx_{n}\r]^{\f{p_{n}}{q_{n}}}\r\}
^{\f{1}{p_n}}\\
&=:\,\lf\|\cdots\|f\|_{\dot{K}^{\alpha_{1},p_{1}}_{q_{1}}
(\rr)}\cdots\r\|_{\dot{K}^{\alpha_{n},p_{n}}_{q_{n}}(\rr)}
<\infty
\end{align*}
with the usual modifications made when $p_{i}
=\infty$ or
$q_{j}=\infty$ for some $i,j\in\{1,\ldots,n \}$,
where $\|\cdots\|f\|_{\dot{K}^{\alpha_{1},p_{1}}_{q_{1}}(\rr)}
\cdots\|_{\dot{K}^{\alpha_{n},p_{n}}_{q_{n}}(\rr)}$ denotes the
norm obtained after taking successively
the $\dot{K}^{\alpha_{1},p_{1}}_{q_{1}}(\rr)$-norm
to $x_{1}$, $\ldots$, the $\dot{K}^{\alpha_{n-1},p_{n-1}}_{q_{n-1}}
(\rr)$-norm to $x_{n-1}$,
and the $\dot{K}^{\alpha_{n},p_{n}}_{q_{n}}(\rr)$-norm to $x_{n}$.
\end{definition}

Recall that, to study the Lebesgue points of functions in mixed-norm Lebesgue spaces,
Huang et al. \cite{HWYY2021} introduced a special case of the mixed Herz space and, later, Zhao
et al. \cite{zyz2022} generalized it to the above case.
Also, both the dual theorem and the Riesz--Thorin
interpolation theorem on the above mixed Herz space have been fully studied in \cite{zyz2022}.

\begin{remark}
Let $\vec{p}:=(p_{1},\ldots,p_{n}),\vec{q}:=(q_{1},\ldots,q_{n})
\in(0,\infty]^{n}$ and $\vec{\alpha}:=(\alpha_{1},\ldots,\alpha_{n})\in\rn$.
By \cite[Proposition 2.22]{zyz2022}, we conclude that
$\dot{E}^{\vec{\alpha},\vec{p}}_{\vec{q}}(\rn)$
is a ball quasi-Banach space if and only if,
for any $i\in\{1,\ldots,n\}$, $\alpha_i\in(-\frac{1}{q_i},\fz)$.
However, as was pointed out in \cite[Remark 4.22]{cjy-01},
$\dot{E}^{\vec{\alpha},\vec{p}}_{\vec{q}}(\rn)$
may not be a quasi-Banach function space.
\end{remark}

The following theorem is a corollary of
Theorems \ref{thm-B-CZ1},
\ref{thm-B-CZ}, and \ref{dual-T-wT}.

\begin{theorem}\label{apply7}
Let $\vec{p}:=(p_{1},\ldots,p_{n}),\vec{q}:=(q_{1},\ldots,q_{n})
\in(0,\infty]^{n}$ and $\vec{\alpha}:=
(\alpha_{1},\ldots,
\alpha_{n})\in\rn$. Then Theorems \ref{thm-B-CZ1},
\ref{thm-B-CZ}, and \ref{dual-T-wT}
with both $X:=\dot{E}^{\vec{\alpha},\vec{p}}_{\vec{q}}(\rn)$
and
\begin{align}\label{4.2x}
p_-:&=\min\lf\{p_1,\ldots,p_n,
q_1,\ldots,q_n,\phantom{\lf(\alpha_{1}+\frac{1}{q_{1}}\r)^{-1}}\r.\noz\\
&\quad\qquad\lf.\lf(\alpha_{1}+\frac{1}{q_{1}}\r)^{-1},
\ldots,\lf(\alpha_{n}+\frac{1}{q_{n}}\r)^{-1} \r \}
\end{align}
hold true.
\end{theorem}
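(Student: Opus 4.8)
The plan is to argue exactly as in the proofs of Theorems \ref{thm-wei} through \ref{apply5}: to check that the mixed Herz space $X:=\dot{E}^{\vec{\alpha},\vec{p}}_{\vec{q}}(\rn)$, paired with the exponent $p_-$ given by \eqref{4.2x}, satisfies all the hypotheses of Theorems \ref{thm-B-CZ1}, \ref{thm-B-CZ}, and \ref{dual-T-wT}, and then to apply those theorems directly. Since $(\alpha_i+1/q_i)^{-1}$ appears in \eqref{4.2x}, we are implicitly in the regime $\alpha_i\in(-1/q_i,\fz)$ for every $i\in\{1,\ldots,n\}$, so by \cite[Proposition 2.22]{zyz2022} the space $X$ is a ball quasi-Banach function space; I would also recall from \cite{zyz2022} (or \cite{cjy-01}) that $X$ has an absolutely continuous quasi-norm, which is the extra ingredient needed for Theorem \ref{dual-T-wT}.

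The substantive step is verifying Assumptions \ref{assump1} and \ref{assump2}. For Assumption \ref{assump1}, I would invoke a Fefferman--Stein vector-valued maximal inequality on the convexification $(\dot{E}^{\vec{\alpha},\vec{p}}_{\vec{q}}(\rn))^{1/p}$; this holds once $p$ is small enough relative to every $p_i$, $q_i$, and $\alpha_i+1/q_i$, which is precisely why $p_-$ is the minimum displayed in \eqref{4.2x} --- the terms $(\alpha_i+1/q_i)^{-1}$ encode the usual constraint that the Hardy--Littlewood maximal operator is bounded on $\dot K^{\alpha,p}_q(\rr)$ only when $-1/q<\alpha$, transported through the convexification. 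For Assumption \ref{assump2}, I would pick $r_0\in(0,\min\{1,p_-\})$ so that $X^{1/r_0}$ is a ball Banach function space, describe its associate space via the dual theorem for mixed Herz spaces established in \cite{zyz2022}, and then apply the boundedness of the powered maximal operator $\mathcal{M}^{((p_0/r_0)')}$ on $(X^{1/r_0})'$ for an appropriate $p_0\in(r_0,\fz)$ chosen below $q$. Both of these verifications should be available essentially verbatim from the mixed-Herz Hardy space machinery in \cite{cjy-01}.

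The main obstacle I expect is the bookkeeping of the admissible windows for $r_0$ and $p_0$: one must choose them so that $X^{1/r_0}$ is a ball Banach function space, Assumption \ref{assump2} holds, and simultaneously the constraints of Theorem \ref{dual-T-wT} --- namely $q\in(\max\{1,p_0\},\fz)$, $d\in(0,r_0]$, and the lower bound on $s$ --- are all met; in particular one needs $\min\{1,p_-\}>d$ with $p_-$ as in \eqref{4.2x}, so that there is genuine room to slot $r_0$ between $d$ and $\min\{1,p_-\}$ while keeping $p_0<q$. Once these index ranges are pinned down by the standard Herz-type computations, Theorems \ref{thm-B-CZ1}, \ref{thm-B-CZ}, and \ref{dual-T-wT} with $X:=\dot{E}^{\vec{\alpha},\vec{p}}_{\vec{q}}(\rn)$ apply without further change and give the claimed boundedness of $\wz T$; I would then add the customary remark that this result is new.
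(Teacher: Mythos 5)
Your proposal is correct and follows essentially the same route as the paper: the paper also verifies that $\dot{E}^{\vec{\alpha},\vec{p}}_{\vec{q}}(\rn)$ is a ball quasi-Banach function space with an absolutely continuous quasi-norm via \cite[Propositions 2.8 and 2.22]{zyz2022}, checks Assumptions \ref{assump1} and \ref{assump2} by citing \cite[Lemma 5.3]{zyz2022} (whose content is precisely the vector-valued maximal inequality and the powered-maximal-operator bound on the associate space that you describe), chooses $r_0\in(0,\min\{1,p_-\})$ and $p_0$ above the maximum of all the exponents exactly as you indicate, and then applies Theorems \ref{thm-B-CZ1}, \ref{thm-B-CZ}, and \ref{dual-T-wT}. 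No substantive differences.
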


\begin{proof}
Let
$$r_0\in\lf(0,\min\lf\{1,p_-\r\}\r)$$
and
$$p_0\in\lf(\max\lf\{p_1,\ldots,p_n,
q_1,\ldots,q_n,\lf(\alpha_{1}
+\frac{1}{q_{1}}\r)^{-1}
,\ldots,\lf(\alpha_{n}+\frac
{1}{q_{n}}\r)^{-1}\r\},\infty \r).$$
Using \cite[Propositions 2.8 and 2.22]{zyz2022}, we conclude that
$\dot{E}^{\vec{\alpha},\vec{p}}_{\vec{q}}(\rn)$
is a ball quasi-Banach space with an absolutely continuous quasi-norm.
By both \cite[Lemma 5.3(i)]{zyz2022} and its proof, we conclude that
Assumption \ref{assump1} with both
$X:=\dot{E}^{\vec{\alpha},\vec{p}}_{\vec{q}}(\rn)$
and $p_-$ in \eqref{4.2x} holds true.
In addition, using both \cite[Lemma 5.3(ii)]{zyz2022}
and its proof, we find that Assumption \ref{assump2}
with $X:=\dot{E}^{\vec{\alpha},\vec{p}}_{\vec{q}}(\rn)$ also holds true.
Thus, all the assumptions of
Theorems \ref{thm-B-CZ1},
\ref{thm-B-CZ}, and \ref{dual-T-wT}
with $X:=\dot{E}^{\vec{\alpha},\vec{p}}_{\vec{q}}(\rn)$ are satisfied.
Then, using Theorems \ref{thm-B-CZ1},
\ref{thm-B-CZ}, and \ref{dual-T-wT}
with $X:=\dot{E}^{\vec{\alpha},\vec{p}}_{\vec{q}}(\rn)$,
we obtain the desired conclusions,
which completes the proof
of Theorem \ref{apply7}.
\end{proof}

\begin{remark}
To the best of our knowledge, Theorem \ref{apply7} is new.
\end{remark}

\noindent\textbf{Acknowledgements}\quad
The authors would like to thank both referees for their very carefully 
reading and several valuable remarks which definitely 
improve the quality of this article.

\medskip

\noindent\textbf{Data Availability Statement}\quad Data
sharing not applicable to
this article as no datasets were generated or
analysed during the current study.

\medskip

\noindent\textbf{Compliance with ethical standards}

\medskip

\noindent\textbf{Conflict of interest}\quad The author declares that they have no conflict interests.

\bigskip

\noindent Yiqun Chen, Hongchao Jia and Dachun Yang (Corresponding
author)

\smallskip

\noindent  Laboratory of Mathematics and Complex Systems
(Ministry of Education of China),
School of Mathematical Sciences, Beijing Normal University,
Beijing 100875, The People's Republic of China

\smallskip

\noindent {\it E-mails}: \texttt{yiqunchen@mail.bnu.edu.cn} (Y. Chen)

\noindent\phantom{{\it E-mails:}} \texttt{hcjia@mail.bnu.edu.cn} (H. Jia)

\noindent\phantom{{\it E-mails:}} \texttt{dcyang@bnu.edu.cn} (D. Yang)

\end{document}